\numberwithin{equation}{section}
\newtheorem{Theorem}{Theorem}[section]
\newtheorem{Corollary}[Theorem]{Corollary}
\newtheorem{Lemma}[Theorem]{Lemma}
\newtheorem{Proposition}[Theorem]{Proposition}
{ \theoremstyle{definition}
\newtheorem{Definition}[Theorem]{Definition}
\newtheorem{Notation}[Theorem]{Notation}
\newtheorem{Example}[Theorem]{Example}
\newtheorem{Remark}[Theorem]{Remark} }
\begin{document}

\allowdisplaybreaks

\newcommand{\arXivNumber}{1606.06120}

\renewcommand{\PaperNumber}{007}

\FirstPageHeading

\ShortArticleName{Connected Lie Groupoids are Internally Connected}

\ArticleName{Connected Lie Groupoids are Internally Connected\\ and Integral Complete in Synthetic Dif\/ferential\\ Geometry}

\Author{Matthew BURKE}

\AuthorNameForHeading{M.~Burke}

\Address{4 River Court, Ferry Lane, Cambridge CB4 1NU, UK}
\Email{\href{mailto:matthew.burke@cantab.net}{matthew.burke@cantab.net}}
\URLaddress{\url{http://www.mwpb.uk}}

\ArticleDates{Received June 29, 2016, in f\/inal form January 13, 2017; Published online January 24, 2017}

\Abstract{We extend some fundamental def\/initions and constructions in the established generalisation of Lie theory involving Lie groupoids by reformulating them in terms of groupoids internal to a well-adapted model of synthetic dif\/ferential geometry. In particular we def\/ine internal counterparts of the def\/initions of source path and source simply connected groupoid and the integration of $A$-paths. The main results of this paper show that if a classical Hausdorf\/f Lie groupoid satisf\/ies one of the classical connectedness conditions it also satisf\/ies its internal counterpart.}

\Keywords{Lie theory; Lie groupoid; Lie algebroid; category theory; synthetic dif\/ferential geometry; intuitionistic logic}

\Classification{22E60; 22E65; 03F55; 18B25; 18B40}

\section{Introduction}

In classical Lie theory we use a formal group law to represent the analytic approximation of a~Lie group. Recall that an \emph{$n$-dimensional formal group law $F$} is an $n$-tuple of power series in the variables $X_{1},\dots ,X_{n}$; $Y_{1},\dots ,Y_{n}$ with coef\/f\/icients in $\mathbb{R}$ such that the equalities
\begin{gather*}
F\big(\vec{X},\vec{0}\big)=\vec{X},\qquad F\big(\vec{0},\vec{Y}\big)=\vec{Y}\qquad \text{and} \qquad F\big(F\big(\vec{X},\vec{Y}\big),\vec{Z}\big)=F\big(\vec{X},F\big(\vec{Y},\vec{Z}\big)\big)
\end{gather*}
hold. In fact there is an equivalence of categories
\begin{equation}\label{classical-lie}
\begin{split}&\includegraphics{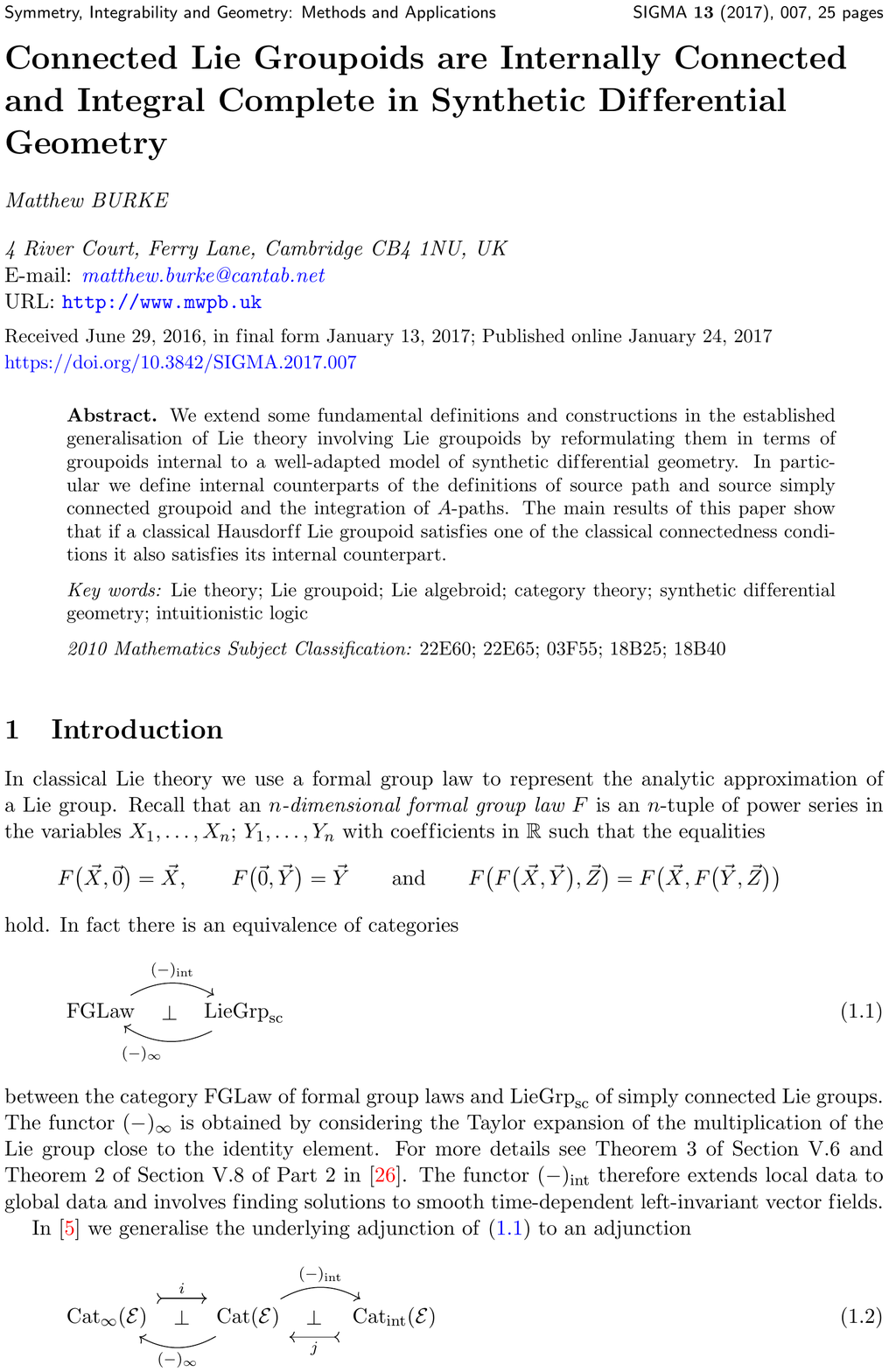}\end{split}
\end{equation}
between the category ${\rm FGLaw}$ of formal group laws and ${\rm LieGrp}_{\rm sc}$ of simply connected Lie groups. The functor $(-)_{\infty}$ is obtained by considering the Taylor expansion of the multiplication of the Lie group close to the identity element. For more details see Theorem~3 of Section~V.6 and Theorem~2 of Section~V.8 of Part~2 in~\cite{MR2179691}. The functor $(-)_{\rm int}$ therefore extends local data to global data and involves f\/inding solutions to smooth time-dependent left-invariant vector f\/ields.

In \cite{burke-relative-to-local-models} we generalise the underlying adjunction of \eqref{classical-lie} to an adjunction
\begin{equation}\label{new-lie-adjunction}
\begin{split}&\includegraphics{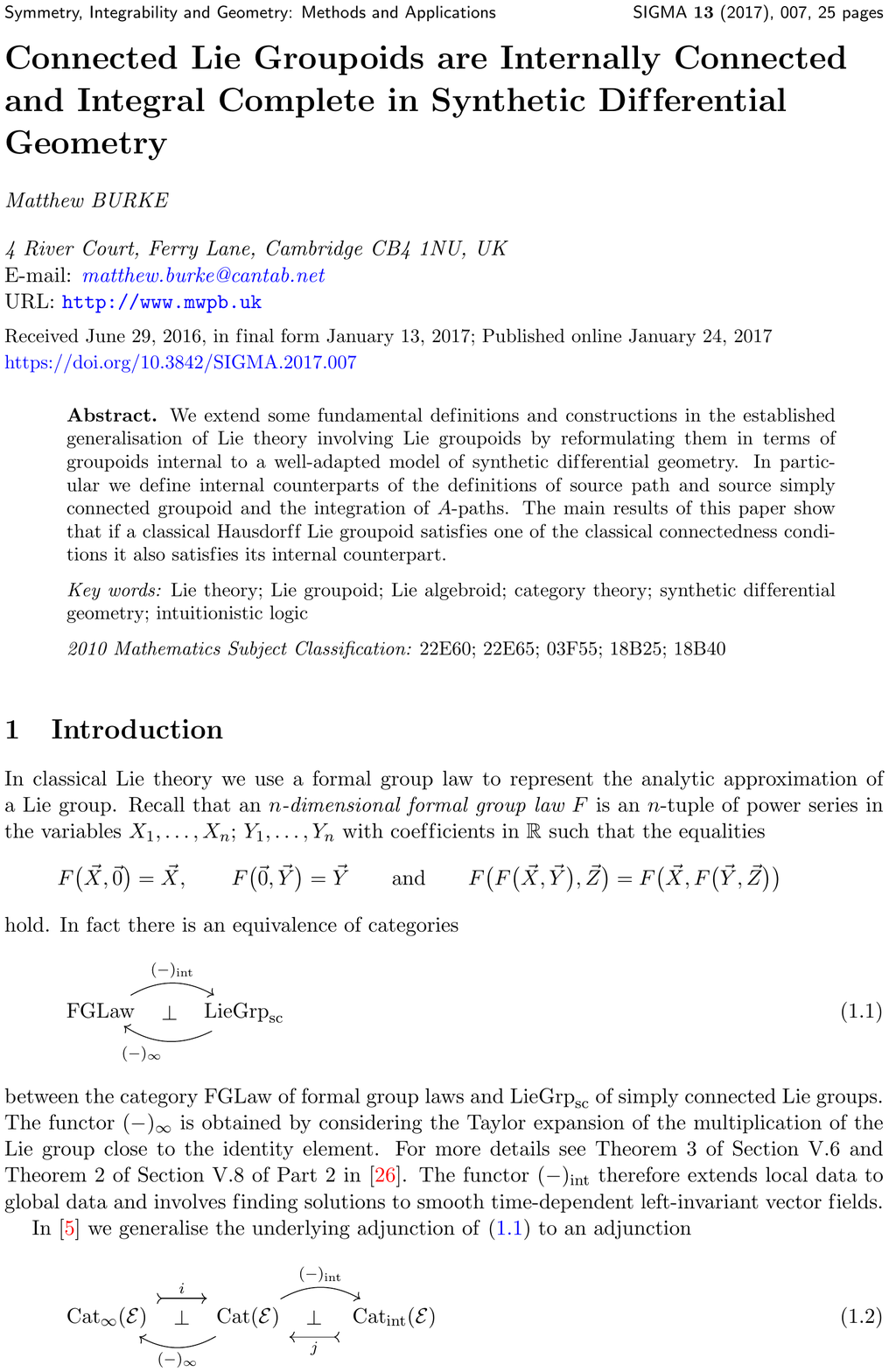}\end{split}
 \end{equation}
between two subcategories of the category of internal categories in a topos~$\mathcal{E}$. When considered together, this paper and~\cite{burke-relative-to-local-models} not only provide a more concise exposition of the thesis~\cite{BurkeThesis} but also contain several stronger results. The main improvement over~\cite{BurkeThesis} contained in this paper is the extension of the results about internal connectedness conditions from groups to groupoids.

In \cite{burke-relative-to-local-models} we prove the result analogous to Lie's second theorem in this context: when we apply the appropriate connectedness conditions (described in Section~\ref{sec:enriched-connectedness}) the functor $(-)_{\infty}j$ is full and faithful. In this paper we justify the work in~\cite{burke-relative-to-local-models} by describing the relationship between~\eqref{classical-lie} and~\eqref{new-lie-adjunction} in the case that $\mathcal{E}$ is a well-adapted model of synthetic dif\/ferential geometry (see Section~\ref{sec:sdg}). This is carried out in Section~\ref{sec:formal-group-laws} where we show that when we restrict ${\rm Cat}(\mathcal{E})$ to the full subcategory on the classical Lie groups, the functor $(-)_{\infty}$ coincides with the formal group law construction described in the Introduction of~\cite{MR506881}.

In addition we relate the adjunction \eqref{new-lie-adjunction} to the established generalisation of Lie theory involving Lie algebroids and Lie groupoids. (See for instance~\cite{MR2157566}.) A~\emph{Lie groupoid} is a groupoid in the category of smooth paracompact manifolds such that the source and target maps are submersions. A~\emph{Lie algebroid} is a vector bundle $A \rightarrow M$ together with a bundle homomorphism $\rho\colon A \rightarrow TM$ such that the space of sections $\Gamma(A)$ is a Lie algebra satisfying the following Leibniz law: for all $X,Y\in \Gamma(A)$ and $f\in C^{\infty}(M)$ the equality
\begin{gather*}[X,fY]=\rho(X)(f)\cdot Y+f[X,Y]\end{gather*}
holds.
In the theory of Lie groupoids and Lie algebroids we have have a functor
\begin{equation*}
\begin{tikzcd}
 {\rm LieAlgd} & {\rm LieGpd}, \lar[swap]{(-)_{\infty}}
\end{tikzcd}
\end{equation*}
which is full and faithful but not essentially surjective. Any Lie algebroid integrates to a topological groupoid, its Weinstein groupoid \cite{MR1973056}, but there can be obstructions to putting a smooth Hausdorf\/f structure on it. For instance see \cite{MR0197622} for a Lie algebroid whose Weinstein groupoid is a smooth but non-Hausdorf\/f Lie groupoid and~\cite{MR778785} for a~Lie algebroid whose Weinstein groupoid is non-smooth. Therefore when dealing with integrability (for instance in~\cite{MR1973056}) the category of smooth manifolds is enlarged to include non-Hausdorf\/f manifolds. Furthermore in~\cite{2006HsianHuaTsengandChenchangZhu22} Tseng and Zhu show that the category of dif\/ferentiable stacks contains all Weinstein groupoids whilst still retaining the concept of tangent vectors. Another approach, pursued in \cite{burke-relative-to-local-models}, is to use the theory of synthetic dif\/ferential geometry where the Weinstein groupoid construction is always possible.

In the process of reformulating the theory of Lie groupoids and Lie algebroids in~\cite{burke-relative-to-local-models} it is necessary to use internal versions of certain conditions describing connectedness and solutions to a specif\/ic type of vector f\/ield. In this paper we will justify these assumptions by showing that all classical Hausdorf\/f Lie groupoids satisfy these stronger conditions. Since the Weinstein groupoid construction is always possible in $\mathcal{E}$ the assumption that our groupoids are Hausdorf\/f does not af\/fect the part of the theory dealing with integrability, only the extent to which the conditions involving completeness and solutions to vector f\/ields generalise the classical ones. So unless otherwise stated all Lie groupoids in this paper will have Hausdorf\/f arrow space.

\subsection{Synthetic dif\/ferential geometry}\label{sec:sdg}

In synthetic dif\/ferential geometry we replace the category ${\rm Man}$ of smooth paracompact Hausdorf\/f manifolds with a certain kind of Grothendieck topos $\mathcal{E}$ called a well-adapted model of synthetic dif\/ferential geometry. In this section we sketch the axioms of a well-adapted model of synthetic dif\/ferential geometry and recall a few key properties.

Firstly there is a full and faithful embedding $\iota\colon {\rm Man} \rightarrowtail \mathcal{E}$ and therefore a ring $R=\iota\mathbb{R}$ in $\mathcal{E}$.
In addition we have the objects
\begin{gather*}D_k=\big\{x\in R\colon x^{k+1}=0\big\},\end{gather*}
which are not terminal. In fact the fundamental Kock--Lawvere axiom holds: the arrow $\alpha\colon R^{k+1}$ $\rightarrow R^{D_k}$ def\/ined by
\begin{gather*}(a_0,a_1,\dots ,a_k)\mapsto\big(d\mapsto a_0+a_1d+\dots +a_kd^k\big)\end{gather*}
is an isomorphism. A set of non-classical objects that will be useful in the sequel are the Weil spectra which are of the following form:
\begin{gather*}{\rm Spec}({\rm Weil}) = \bigg\{(x_1,\dots ,x_n)\colon \bigwedge_{i=1}^n\big(x_i^{k_i}=0\big)\wedge\bigwedge_{j=1}^m(p_j=0)\bigg\},\end{gather*}
where $n,m\in\mathbb{N}_{\geq 0}$, $k_i\in \mathbb{N}_{>0}$ and the $p_j$ are polynomials in the~$x_i$. We write $D_{\infty}=\bigcup_i D_i$ and $D=D_1$.

The following is Def\/inition~3.1 in Part~III of~\cite{MR2244115}.

\begin{Definition}A pair of maps $f_i\colon M_i \rightarrow N$ ($i=1,2$) in ${\rm Man}$ with common codomain are said to be \emph{transversal} to each other if\/f for each pair of points $x_1\in M_1$, $x_2\in M_2$ with $f_1(x_1)=f_2(x_2)$ ($=y$ say), the images of $(df_i)_{x_i}$ ($i=1,2$) jointly span $T_y N$ as a vector space.
\end{Definition}

\begin{Definition}A topos $\mathcal{E}$ together with a full and faithful embedding $\iota\colon Man\rightarrow \mathcal{E}$ is a~\emph{well-adapted model of synthetic differential geometry} if\/f
\begin{itemize}\itemsep=0pt
\item the functor $\iota$ preserves transversal pullbacks,
\item the functor $\iota$ preserves the terminal object,
\item the functor $\iota$ sends arbitrary open covers in ${\rm Man}$ to jointly epimorphic families in~$\mathcal{E}$,
\item the internal ring $\iota(\mathbb{R})$ satisf\/ies the Kock--Lawvere axiom,
\item for all Weil spectra $D_W$ the functor $(-)^{D_W}\colon \mathcal{E}\rightarrow \mathcal{E}$ preserves all colimits.
\end{itemize}
\end{Definition}

\begin{Remark}Since $\iota\colon {\rm Man} \rightarrow \mathcal{E}$ preserves transversal pullbacks it determines an embedding of ${\rm LieGpd}_H$ into ${\rm Grpd}(\mathcal{E})$. Here we have written ${\rm LieGpd}_H$ for the subcategory of ${\rm LieGpd}$ consisting of the groupoids that have Hausdorf\/f arrow space.
\end{Remark}

\begin{Remark}If $M$ is a smooth manifold then we will often abuse notation by writing $M$ to denote the object $\iota(M)$ in the well-adapted model.
\end{Remark}

Using the Kock--Lawvere axiom we can show that $\iota(TM)\cong M^D$ as vector bundles over~$M$ and that the Lie bracket corresponds to an inf\/initesimal commutator. For more detail see~\cite{MR2244115}. Further\-more in Section~\ref{sec:formal-group-laws} we show that formal group laws correspond to groups of the form $(D_{\infty}^n,\mu)$.

\subsection{Smooth af\/f\/ine schemes and the Dubuc topos}\label{sec:dubuc-topos}

In Section~\ref{sec:ordinary-connectedness-implies-enriched} we will need a more detailed description of the coverage that generates the topos~$\mathcal{E}$. Hence in that section we will work in a well-adapted model of synthetic dif\/ferential geometry called the Dubuc topos. In this section we brief\/ly sketch the essential features of the Dubuc topos and refer to~\cite{1981EduardoJDubuc80} for more details. Note that this means that the results of Section~\ref{sec:ordinary-connectedness-implies-enriched} hold for all the well-adapted models generated by a site contained in the Dubuc site. For instance by referring to Appendix~2 of~\cite{MR1083355} we see that our results hold for the Cahiers topos (see~\cite{MR557083}) and the classifying topos of local Archimedean $C^{\infty}$-rings (see Appendix~2 of~\cite{MR1083355}).

In addition in Section~\ref{sec:Lie Groupoids are Integral Complete} it will be convenient to know that every representable object is a~subobject of $R^n$ for some $n\in \mathbb{N}$. Therefore in that section we will work in any well-adapted model $\mathcal{E}$ that is generated by a~subcanonical site whose underlying category is a full subcategory of the category of af\/f\/ine $C^{\infty}$-schemes as def\/ined below. In particular this means that the results of Section~\ref{sec:Lie Groupoids are Integral Complete} hold for the Dubuc topos.
\begin{Definition}\label{def:smooth-affine-schemes}The \emph{category $\mathcal{C}$ of affine $C^{\infty}$-schemes} has as objects pairs $[n,I]$ where $n\in\mathbb{N}$ and~$I$ is a~f\/initely generated ideal of $C^{\infty}(\mathbb{R}^n,\mathbb{R})$. The arrows
\begin{gather*}[n,I]\xrightarrow{f} [m,J]\end{gather*}
are equivalence classes of smooth functions $f\in C^{\infty}(\mathbb{R}^n,\mathbb{R}^m)$ such that
 \begin{itemize}\itemsep=0pt
 \item we identify $f\sim g$ if\/f $f\equiv g \pmod I$,
 \item for all $j\in J$ we have $jf\sim 0$.
 \end{itemize}
\end{Definition}
Now we def\/ine a slight generalisation of the notion of open set. Using these open sets we def\/ine the Dubuc coverage by using inverse images of smooth functions.
\begin{Definition}The \emph{open subobject $U$ of $[n,I]$ defined by $\chi_U\colon \mathbb{R}^n \rightarrow \mathbb{R}$} is the subobject
 \begin{gather*}[n+1,(I,\chi_U\cdot X_{n+1}-1)] \xrightarrow{{\rm proj}} [n,I],\end{gather*}
which intuitively corresponds to the subset $\chi_U^{-1}(\mathbb{R}-\{0\})\cap[n,I]$. The \emph{Dubuc coverage $\mathcal{J}$} consists of the families of open subobjects
 \begin{gather*}(U_i \rightarrowtail [n,I])_{i\in I}\end{gather*}
that are jointly surjective.
\end{Definition}

The site that we use to generate the Dubuc topos is the full subcategory of the category of af\/f\/ine $C^{\infty}$-schemes on the germ-determined schemes which are def\/ined as follows:
\begin{Definition}\label{def:germ-determined-ideal}
For a smooth function $f\colon \mathbb{R}^{n}\rightarrow \mathbb{R}$ we write $\mathbf{g}_{x}(f)$ for the equivalence class of functions that is the germ of $f$ at $x\in\mathbb{R}^{n}$ and $G_{x}$ for the ring of germs of smooth functions at~$x$. For an ideal of smooth functions $I\triangleleft C^{\infty}(\mathbb{R}^{n},\mathbb{R})$ we write $Z(I)$ for the zero-set of $I$ and
\begin{gather*}\mathbf{g}_{x}(I)=\big\{\Sigma_{j=1}^{k}r_{j}\mathbf{g}_{x}(\phi_{j})\colon (r_{j}\in G_{x})\wedge(\phi_{j}\in I)\big\}\end{gather*}
for the ideal generated by germs of elements of $I$. Then a scheme $[n,I]$ is \emph{germ-determined} if\/f
\begin{gather*}\forall\, g\in C^{\infty}\big(\mathbb{R}^n,\mathbb{R}\big), \quad \big(\forall\, x\in Z(I), \, \mathbf{g}_{x}(g)\in \mathbf{g}_{x}(I)\big)\implies g\in I.\end{gather*}
We denote by $\mathcal{C}_{\rm germ}\subset \mathcal{C}$ the full subcategory on the objects that are germ-determined. The \emph{Dubuc topos} is the Grothendieck topos generated by taking sheaves on the site $(\mathcal{C}_{\rm germ},\mathcal{J})$ where~$\mathcal{J}$ is the Dubuc coverage.
\end{Definition}

\subsection{Internal connectedness}\label{sec:enriched-connectedness}

In classical Lie theory we study how much of the data in a Lie groupoid can be recovered from the subset of this data that is inf\/initely close to the identity arrows of the Lie groupoid. Since global features such as connectedness cannot be captured by the inf\/initesimal arrows we need to restrict our attention to Lie groupoids that are source path and source simply connected.

We say that a Lie groupoid $\mathbb{G}$ with arrow space $G$ and object space $M$ is \emph{source path/source simply connected} if\/f all of its source f\/ibres are path/simply connected. Let $\mathbb{I}$ be the pair groupoid on the unit interval $I$ that has precisely one invertible arrow between each pair of elements of~$I$. Then it is easy to see that the global sections of the object $\mathbb{G}^{\mathbb{I}}= {\rm Grpd} (\mathcal{E})(\mathbb{I},\mathbb{G})$ in $\mathcal{E}$ are equivalent to arrows $I \rightarrow G$ in $\mathcal{E}$ that are source constant and start at an identity element of~$G$. Therefore~$\mathbb{G}$ is source path connected if\/f
\begin{gather*}\Gamma\big(\mathbb{G}^{\mathbb{I}}\big) \xrightarrow{\Gamma(\mathbb{G}^{\iota_{\mathbb{I}}})} \Gamma\big(\mathbb{G}^{\partial \mathbb{I}}\big)\end{gather*}
is an epimorphism in ${\rm Set}$. We have written $\Gamma$ for the global sections functor and $\iota_{\mathbb{I}}\colon \partial \mathbb{I} \rightarrow \mathbb{I}$ for the full subcategory that is the pair groupoid on the boundary of~$I$. In this case $\iota_{\mathbb{I}}$ is simply the inclusion of the long arrow $(0,1)\colon \mathbf 2 \rightarrow \mathbb{I}$. Similarly $\mathbb{G}$ is source simply connected if\/f it is source path connected and \begin{gather*}\Gamma\big(\mathbb{G}^{\mathbb{I}^2}\big) \xrightarrow{\Gamma\big(\mathbb{G}^{\iota_{\mathbb{I}^2}}\big)} \Gamma\big(\mathbb{G}^{\partial \mathbb{I}^2}\big)\end{gather*}
is an epimorphism in ${\rm Set}$. We have written $\iota_{\mathbb{I}^2}\colon \partial \mathbb{I}^2 \rightarrow \mathbb{I}^2$ for the full subcategory that is the pair groupoid on the boundary of~$I^2$.

When we work with arbitrary groupoids in a well-adapted model $\mathcal{E}$ of synthetic dif\/ferential geometry it is necessary to work with epimorphisms between objects of $\mathcal{E}$ than between their sets of global sections. Hence we make the following def\/initions:
\begin{Definition} A groupoid $\mathbb{G}$ in $\mathcal{E}$ is \emph{$\mathcal{E}$-path connected} if\/f
 \begin{gather*}\mathbb{G}^{\mathbb{I}} \xrightarrow{\mathbb{G}^{\iota_{\mathbb{I}}}} \mathbb{G}^{\partial \mathbb{I}}\end{gather*}
is an epimorphism in $\mathcal{E}$. A groupoid $\mathbb{G}$ in $\mathcal{E}$ is \emph{$\mathcal{E}$-simply connected} if\/f it is $\mathcal{E}$-path connected and
 \begin{gather*}\mathbb{G}^{\mathbb{I}^2} \xrightarrow{\mathbb{G}^{\iota_{\mathbb{I}^2}}} \mathbb{G}^{\partial \mathbb{I}^2}\end{gather*}
is an epimorphism in $\mathcal{E}$.
\end{Definition}
This means that for an arbitrary groupoid in $\mathcal{E}$ being $\mathcal{E}$-connected is a stronger condition to impose than being source connected. In Section~\ref{sec:path-and-simply-connectedness} we show that a Hausdorf\/f Lie groupoid is source path/simply connected if\/f it is $\mathcal{E}$-path/$\mathcal{E}$-simply connected.

\subsection{The jet part}

The linear approximation of a Lie groupoid has the structure of a Lie algebroid (see for instance Section~3.5 of~\cite{MR2157566}). By contrast in~\cite{burke-relative-to-local-models} we def\/ine an analytic approximation of an arbitrary groupoid in~$\mathcal{E}$. This new structure approximates a Lie groupoid in an analogous way to how a~formal group law approximates a Lie group. In this section we brief\/ly sketch the main features of this analytic approximation.

Using the inf\/initesimal objects of synthetic dif\/ferential geometry we can def\/ine an \emph{infinitesimal neighbour relation~$\sim$}. Intuitively speaking $a\sim b$ expresses that~$b$ is contained in an inf\/initesimal jet based at~$a$. For more details see Section~\ref{sec:infinitesimal-neighbour-relation}. Using this neighbour relation we can def\/ine the \emph{jet part $\mathbb{G}_{\infty}$} of a~groupoid $\mathbb{G}$ with object space $G$ and arrow space $M$ that consists of all the arrows that are inf\/initely close to an identity arrow. In~\cite{burke-relative-to-local-models} we show that this jet part is closed under composition and so def\/ines a subcategory
\begin{gather*}\mathbb{G}_{\infty} \xrightarrow{\iota_{\mathbb{G}}^{\infty}} \mathbb{G},\end{gather*}
which is however not in general a groupoid.

\subsubsection{Symmetry of the neighbour relation}

It turns out that the neighbour relation $\sim$ is not symmetric for all objects of~$\mathcal{E}$. In fact it is not symmetric on the object~$D$ of all nilsquares in the real line. In~\cite{burke-relative-to-local-models} we show that this implies that the jet part~$\nabla{D}_{\infty}$ of the pair groupoid $\nabla{D}$ on $D$ is not a groupoid (although it is a category). Fortunately in~\cite{burke-relative-to-local-models} we also show that the symmetry of $\sim$ in the arrow space of a groupoid $\mathbb{G}$ is not only a necessary condition but also a suf\/f\/icient condition to ensure that the jet part~$\mathbb{G}_{\infty}$ of~$\mathbb{G}$ is a groupoid. We justify this assumption in Section~\ref{sec:symmetry-of-neigbour-relation} by showing that the neighbour relation is symmetric for all classical Hausdorf\/f Lie groupoids.

\subsubsection{Path connectedness of the jet part}

When we prove Lie's second theorem in~\cite{burke-relative-to-local-models} there is an additional condition that is required which is not visible in the classical theory. In addition to requiring that a groupoid satisf\/ies certain connectedness and completeness conditions we need to assume that its jet part is $\mathcal{E}$-path connected. We justify this assumption in Section~\ref{sec:jet-part-path-connected} by showing that the jet part of every classical Hausdorf\/f Lie groupoid is $\mathcal{E}$-path connected.

\subsection{Integral completeness}

To construct global data from local data in classical Lie theory we use the fact that all smooth vector f\/ields admit a unique local solution when we f\/ix an initial vector. Unfortunately when we replace the category ${\rm Man}$ with a well-adapted model $\mathcal{E}$ of synthetic dif\/ferential geometry we can no longer use this result. In this section we identify a class of groupoids for which we can construct global data from local data. It turns out that we do not need to assert the existence of all solutions to smooth vector f\/ields but instead a weaker condition suf\/f\/ices. In~\cite{MR1973056} we see that the crucial lifting property required to prove Lie's second theorem involves the integration of a certain type of path in a Lie algebroid (called $A$-paths) to a certain type of path in a~Lie groupoid (called $G$-paths).

Let $\mathbb{I}$ be the pair groupoid on the unit interval~$I$ and $\mathbb{G}$ be a Lie groupoid with arrow space~$G$ and object space~$M$. In Section~\ref{sec:paths-of-infinitesimals} we show that $A$-paths correspond to global sections of the object $\mathbb{G}^{\mathbb{I}_{\infty}}$ in $\mathcal{E}$ and $G$-paths correspond to global sections of the object~$\mathbb{G}^{\mathbb{I}}$. Hence we restrict attention to groupoids that are integral complete in the following sense:
\begin{Definition}\label{def:integral-complete}
 A groupoid $\mathbb{G}$ in $\mathcal{E}$ is \emph{integral complete} if\/f
 \begin{gather*}\mathbb{G}^{\mathbb{I}} \xrightarrow{\mathbb{G}^{\iota_{\mathbb{I}}^{\infty}}} \mathbb{G}^{\mathbb{I}_{\infty}}\end{gather*}
 is an isomorphism in ${\rm Gpd}(\mathcal{E})$.
\end{Definition}

This assumption is a crucial one in the proof of Lie's second theorem presented in~\cite{burke-relative-to-local-models} and so in Section~\ref{sec:integral-completeness} we justify it by proving that all classical Hausdorf\/f Lie groupoids are integral complete.

\section{Path and simply connectedness}\label{sec:path-and-simply-connectedness}

In this section we show that for all Hausdorf\/f Lie groupoids $\mathbb{G}$ with arrow space $G$ and object space $M$ the classical source path and source simply connectedness conditions coincide with their internal counterparts. (Please see Section~\ref{sec:enriched-connectedness} for the relevant def\/initions.) In other words, we show that if $\mathbb{G}$ is source path connected then~$\mathbb{G}$ is~$\mathcal{E}$-path connected and if further~$\mathbb{G}$ is source simply connected then~$\mathbb{G}$ is $\mathcal{E}$-simply connected. To do this we will need an explicit description of the coverage that generates the well-adapted model~$\mathcal{E}$. Hence for this section we will assume that~$\mathcal{E}$ is the Dubuc topos which is generated by the Dubuc site as def\/ined in Section~\ref{sec:dubuc-topos}. Note that this means that our results hold for all the well-adapted models generated by a site contained in the Dubuc site. For instance by referring to Appendix~2 of~\cite{MR1083355} we see that our results hold for the Cahiers topos (see \cite{MR557083}) and the classifying topos of local Archimedean $C^{\infty}$-rings (see Appendix~2 of~\cite{MR1083355}).

We deduce both the path connected and simply connected results from the following stronger result.

\begin{Notation} Let $B$ be a compact and contractible subset of a Euclidean space that is a zero set of an ideal of smooth functions~$I$:
\begin{gather*}B=[n,I]=\big\{\vec{x}\in \mathbb{R}^n\colon \forall\, \phi \in I,\, \phi(\vec{x})=0\big\},\end{gather*}
which means that we can view $B$ as a representable object in the Dubuc topos as well as a~subset of Euclidean space.

Let $\partial B$ denote the boundary of $B$ and $\nabla B$ and $\nabla\partial B$ be the pair groupoids on $B$ and $\partial B$ respectively. (Recall that the pair groupoid has precisely one invertible arrow between any pair of objects.) There is a~natural inclusion $\iota_B\colon \nabla\partial B \rightarrow \nabla B$.
\end{Notation}

\begin{Notation} We write $r\in_{X}R$ to denote that $r$ is an arrow $X\rightarrow R$ in~$\mathcal{E}$ and say that~$r$ is \emph{a~generalised element of $R$ at stage of definition~$X$}.
\end{Notation}

We prove that if every global element $f\in_1 \mathbb{G}^{\nabla\partial B}$ has a f\/iller $F\in_1 \mathbb{G}^{\nabla B}$ (i.e., $G^{\iota_B}F=f$) then the arrow
\begin{gather*}\mathbb{G}^{\iota_B}\colon \ \mathbb{G}^{\nabla B} \rightarrow \mathbb{G}^{\nabla\partial B}\end{gather*}
is an epimorphism in $\mathcal{E}$. Note that being $\mathcal{E}$-path connected is the case when~$B$ is the unit interval~$I$ and being $\mathcal{E}$-simply connected is the conjunction of the cases $B=I$ and $B=I^2$.

Our general strategy will be to split the tangent bundle using the submersion~$s$ and then show that various constructions involving Riemannian exponential maps can be forced to respect this splitting. Once this is done we can work in just one source f\/ibre where the result is substantially easier.

However f\/irst we need to consider the interrelationships between various kinds of open subset and subobject possible in the context of a well-adapted model of synthetic dif\/ferential geometry.

\subsection{Open subobjects of function spaces}

Our aim is to show that a certain arrow between function spaces is an epimorphism. As is the case for all Grothendieck toposes, the epimorphisms in the Dubuc topos are characterised in terms of the coverage that is used to generate the topos. However the natural and convenient notion of open subset for a space of smooth functions is the smooth compact-open topology. In order to mediate between these two notions of open subobject/subset we introduce another type of open subobject due to Penon. First we recall that every Penon open subobject of a~representable object is a~Dubuc open subobject. Then we recall that any smooth compact-open subset of the set of global sections of a~function space induces a Penon open subobject of the function space.

\subsubsection{Penon open subobjects}

In this section we will brief\/ly sketch some of the theory of topological structures in synthetic dif\/ferential geometry and refer to~\cite{1987BungeMartaandDubucEduardoJ131} and~\cite{MR798526} for more comprehensive accounts. Following Penon in~\cite{MR609161} we say that an element $r$ of the line object $R$ in the Dubuc topos is \emph{infinitesimal} if\/f
\begin{gather*}\neg\neg(r=0)\end{gather*}
holds in the internal logic of the Dubuc topos. Since the line object contains nilpotent elements it is not a f\/ield. However it is easy to see that all of these nilpotents are inf\/initesimal as def\/ined above and in fact Theorem~10.1 in~\cite{MR2244115} tells us that the line object~$R$ is a~\emph{field of fractions}, which is to say that for all elements $r\in R$ the proposition
\begin{gather}\label{field-of-fractions}
 \neg(r=0)\iff (r~\text{is invertible})
\end{gather}
holds. We note that in the context of classical logic being a f\/ield of fractions implies that every element is either zero or invertible but this implication does not hold for intuitionistic logic. Using the correspondence in \eqref{field-of-fractions} we can deduce that the inf\/initesimals and the invertible elements of the line object are separated in the following sense: for all $r,s\in R$ the proposition
\begin{gather*}\neg(r=0)\wedge\neg\neg(s=0)\implies \neg(r=s)\end{gather*}
holds. The following def\/inition is Def\/inition~1.5 in~\cite{MR866615}.
\begin{Definition}\label{04/16/1505:00:39 PM}
A subobject $U\subset X$ in $\mathcal{E}$ is \emph{Penon open} if\/f the proposition
\begin{gather*}\forall\, u\in U, \quad \forall\, x\in X, \quad (x\in U)\vee(\neg(u=x))\end{gather*}
holds in the internal logic of $\mathcal{E}$.
\end{Definition}
\begin{Remark}If $U$ is Penon open then for all $u\in U$ then there is an inclusion
\begin{gather*}\{x\colon \neg\neg (x=u)\}\subset U.\end{gather*}
\end{Remark}
\begin{Example}
The subobject
\begin{gather*}\{r\in R\colon \neg(r=0)\}\end{gather*}
of $R$ is a Penon open subset.
\end{Example}In fact we can give a characterisation of the Penon open subsets of representable objects in terms of classical open subsets. Recall from \cite{1981EduardoJDubuc80} and Lemma~1.3 in~III.1 of~\cite{MR1083355} that the site generating the Dubuc topos is subcanonical and that by construction the image of any open subset under the full and faithful embedding of the category of smooth manifolds into the Dubuc topos is a Dubuc open subobject. The following is Corollary~8 in~\cite{MR1055025}.
\begin{Proposition}\label{Penon-iff-Dubuc-on-representables}
A subobject $U$ of a representable object $yX$ in the Dubuc topos is Penon open iff it is of the form $U=\iota V\cap yX$ for some open subset $V\subset\mathbb{R}^{n}$.
\end{Proposition}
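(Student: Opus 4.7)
The plan is to prove the two implications separately, in both cases working via generalized elements and exploiting the field-of-fractions property \eqref{field-of-fractions} of the line object $R$ together with the separation property of infinitesimals and invertible elements deduced from it.

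For the direction that $V \subset \mathbb{R}^n$ open implies $\iota V \cap yX$ Penon open, I would first reduce to the case where $V$ is a basic open set of the form $\{p : \chi(p) \neq 0\}$ for a single smooth function $\chi \colon \mathbb{R}^n \to \mathbb{R}$, using that the collection of Penon open subobjects is closed under arbitrary unions (so it suffices to verify the condition for the basic opens that generate a general open $V$). For such a basic open, membership in $\iota V \cap yX$ at stage $T$ for a generalized element $u$ translates internally to the assertion that $\chi \circ u \colon T \to R$ is invertible. Given such a $u \in \iota V \cap yX$ and an arbitrary $x \in yX$, I would distinguish internally two cases via \eqref{field-of-fractions} applied to $\chi(x)$: if $\neg(\chi(x) = 0)$ then $x \in \iota V$, while if $\neg\neg(\chi(x) = 0)$ then, combined with the invertibility of $\chi(u)$ and the separation property, one deduces $\neg(\chi(u) = \chi(x))$ and hence $\neg(u = x)$.

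For the converse, suppose $U \subset yX$ is Penon open. The idea is to recover the classical open $V$ from $U$ by testing on global sections. Concretely, define $V \subset \mathbb{R}^n$ to be the union of all classical opens whose image under $\iota$ intersected with $yX$ lands inside $U$, and then show $U = \iota V \cap yX$. One containment is easy; for the other, given a generalized element $x \in U$ at a representable stage $T = [m, J]$, one uses the Penon condition together with Dubuc's description of the coverage to produce a Dubuc-covering family of $T$ on each member of which $x$ factors through some $\iota V' \cap yX$ with $V'$ classically open, and then glues these local classical opens into a single $V$ witnessing $x \in \iota V \cap yX$.

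The main obstacle is this converse direction, specifically the translation between the internal intuitionistic condition characterizing Penon openness and external classical topology on Euclidean space. Making this precise requires exploiting both the germ-determined structure of the site $(\mathcal{C}_{\mathrm{germ}}, \mathcal{J})$ generating the Dubuc topos, so that subobjects are controlled by their behaviour on germs at classical points, and the fact that $\iota$ sends classical open covers of $\mathbb{R}^n$ to jointly epimorphic families in $\mathcal{E}$, so that a locally-constructed classical $V$ genuinely refines to all of $U$.
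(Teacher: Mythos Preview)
The paper does not prove this proposition at all: it is simply quoted as Corollary~8 of Dubuc's paper~\cite{MR1055025}. So there is no ``paper's own proof'' to compare against; your proposal is an attempt to reconstruct an argument that the present paper outsources entirely.

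That said, your forward direction has a genuine gap. You propose to ``distinguish internally two cases via \eqref{field-of-fractions} applied to $\chi(x)$: if $\neg(\chi(x)=0)$ \ldots, while if $\neg\neg(\chi(x)=0)$ \ldots''. But the field-of-fractions axiom $\neg(r=0)\iff r$ invertible does \emph{not} entitle you to the disjunction $\neg(\chi(x)=0)\vee\neg\neg(\chi(x)=0)$; that is an instance of weak excluded middle, and nothing in the paper establishes it for propositions of this form. What actually drives this direction is that $R$ is a \emph{local} ring in the Dubuc topos: for any $r$, either $r$ or $1-r$ is invertible. Applying this to $r=\chi(x)\cdot\chi(u)^{-1}$ yields directly either $\chi(x)$ invertible (so $x\in U$) or $\chi(u)-\chi(x)$ invertible (so $\neg(u=x)$). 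Your separation property is a consequence of, not a substitute for, this local-ring input.

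Your converse sketch is in the right spirit---extract a classical open from the Penon condition using the site description---but is too schematic to stand as a proof. In Dubuc's argument the germ-determined hypothesis is used in an essential and rather delicate way to pass from the internal Penon condition to an honest classical open cover; simply invoking ``the germ-determined structure'' and ``gluing local classical opens'' does not yet engage with why germ-determinacy is the exact hypothesis needed. If you want a self-contained proof here you should consult~\cite{MR1055025} directly rather than reconstruct it.
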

\begin{Corollary}\label{cor:penon-implies-dubuc}
 If $X$ is a Penon open subobject of a representable then it is Dubuc open.
\end{Corollary}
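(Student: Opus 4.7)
The plan is to invoke Proposition~\ref{Penon-iff-Dubuc-on-representables} and then verify that the concrete form it supplies is literally a Dubuc open subobject in the sense of Section~\ref{sec:dubuc-topos}. All the non-trivial content has already been extracted by the proposition, which translates the intuitionistic formula in Definition~\ref{04/16/1505:00:39 PM} into a statement about a classical open subset of Euclidean space; what remains is essentially a bookkeeping step.

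In detail, suppose $U$ is a Penon open subobject of a representable which we may write as $y[n,I]$. By Proposition~\ref{Penon-iff-Dubuc-on-representables} there is a classical open subset $V\subseteq \mathbb{R}^{n}$ with $U = \iota V\cap y[n,I]$. I would then choose a smooth characteristic function $\chi_{V}\colon \mathbb{R}^{n}\to \mathbb{R}$ satisfying $V = \chi_{V}^{-1}(\mathbb{R}\setminus\{0\})$; the existence of such a $\chi_{V}$ is a standard consequence of smooth partitions of unity on $\mathbb{R}^{n}$. With this $\chi_{V}$ in hand, the Dubuc open subobject of $y[n,I]$ cut out by $\chi_{V}$ is, directly from the definition in Section~\ref{sec:dubuc-topos}, the representable
\begin{equation*}
[n+1,(I,\chi_{V}\cdot X_{n+1}-1)] \longrightarrow [n,I].
\end{equation*}

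It remains to identify this scheme with $\iota V\cap y[n,I]$. Since $\iota V$ is itself the Dubuc open subobject of $\iota\mathbb{R}^{n} = y[n,(0)]$ defined by the same $\chi_{V}$, and since Dubuc open subobjects are stable under pullback along morphisms of representables (this is immediate from the displayed presentation, as adjoining the generator $\chi_{V}\cdot X_{n+1}-1$ commutes with adjoining the ideal $I$), restricting $\iota V$ along the monomorphism $y[n,I]\hookrightarrow \iota\mathbb{R}^{n}$ yields precisely the scheme above. Therefore $U$ is Dubuc open.

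There is no genuine obstacle here: the proposition absorbs the analytic and logical content, and the only place where classical analysis enters is the choice of a smooth characteristic function for $V$, handled by smooth Urysohn on $\mathbb{R}^{n}$. The one point that requires a moment of care is the compatibility between the ``intersection with $\iota V$'' and ``$\chi_{V}$-localisation'' presentations of the same subobject, and this is simply unwound from the definitions.
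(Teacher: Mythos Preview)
Your proof is correct and follows the same line as the paper, which in fact offers no explicit argument: the corollary is stated immediately after Proposition~\ref{Penon-iff-Dubuc-on-representables} and is meant to be read as an immediate consequence of it together with the remark (just before the proposition) that images of classical open subsets under $\iota$ are Dubuc open by construction. You have simply unpacked this implicit step by writing down the characteristic function $\chi_V$ and the resulting representable explicitly.
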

Finally we record the result that arbitrary Penon open subobjects are stable under pullback.

\begin{Corollary}
 Let $f \colon W \rightarrow X$ be an arrow and $ U $ be a Penon open subobject of $ X $. Then
\begin{gather*}
 f ^* U = \{ w \in W \colon f w \in U \}
\end{gather*}
is a Penon open subobject of $W$.
\end{Corollary}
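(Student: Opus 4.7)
The plan is to verify the defining condition of Penon openness for $f^{\ast}U$ by arguing directly in the internal logic of $\mathcal{E}$, reducing it to the corresponding condition for $U$ and using the fact that any morphism $f$ preserves equality (hence, contrapositively, preserves inequality in the intuitionistic sense of $\neg(-=-)$).

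In more detail, I would assume a generalised element $u \in_{Y} f^{\ast}U$ and a generalised element $w \in_{Y} W$ at a common stage of definition $Y$, and aim to establish
\begin{gather*}
(w \in f^{\ast}U)\vee\neg(u = w).
\end{gather*}
By the definition of $f^{\ast}U$, the hypothesis $u \in f^{\ast}U$ unfolds to $fu \in U$. Since $fw$ is a generalised element of $X$ and $U$ is Penon open in $X$, Definition~\ref{04/16/1505:00:39 PM} applied with the element $fu \in U$ and the arbitrary element $fw \in X$ yields
\begin{gather*}
(fw \in U)\vee\neg(fu = fw).
\end{gather*}
In the first case $fw \in U$ gives $w \in f^{\ast}U$ by the defining pullback description. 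In the second case, the implication $u = w \Rightarrow fu = fw$ (functoriality of $f$ applied inside $\mathcal{E}$) yields, by contraposition, $\neg(fu = fw)\Rightarrow \neg(u = w)$, which is a move valid in intuitionistic logic. Combining the two cases gives the required disjunction, so $f^{\ast}U$ is Penon open in $W$.

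I do not expect any serious obstacle: the argument is a purely formal manipulation of the intuitionistic statement in Definition~\ref{04/16/1505:00:39 PM}, and no topos-theoretic machinery beyond the soundness of internal reasoning in $\mathcal{E}$ is required. The only subtlety worth flagging is ensuring that the quantifiers are interpreted at an arbitrary stage $Y$ (rather than only at global stage $1$), so that the conclusion really produces a subobject-level statement; this is handled automatically by writing the argument with generalised elements as above.
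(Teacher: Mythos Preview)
Your proposal is correct and follows essentially the same argument as the paper's proof: apply the Penon-openness of $U$ to the elements $fu\in U$ and $fw\in X$, then translate each disjunct back to $W$ via the definition of $f^{*}U$ and the intuitionistically valid contraposition $\neg(fu=fw)\Rightarrow\neg(u=w)$. The paper's version is just slightly terser and does not spell out the generalised-element framing, but the content is identical.
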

\begin{proof}
 The hypothesis that $ U $ is a Penon open subobject of $ X $ implies that for all $w \in W$ and $v \in f ^* U$ the proposition
 \begin{gather*}
 ( fw \in U ) \vee ( \neg ( fw = fv ) )
 \end{gather*}
 holds. But by def\/inition $ fw \in U $ if\/f $ w \in f^*U $ and it is immediate that $ \neg ( fw = fv )$ implies that $ \neg (w = v )$.
\end{proof}

\subsubsection{Smooth compact-open subsets}\label{sec:smooth-compact-open-subsets}

Recall that for any topos $ \mathcal{E}$ the global sections functor $\Gamma$ restricts to a functor from the poset of subobjects of some function space $Y^X$ to the poset of subsets of $\Gamma(Y^X)$. In fact when $\mathcal{E}$ is the Dubuc topos $\Gamma$ has a right adjoint
\begin{equation*}
\includegraphics[scale=0.92]{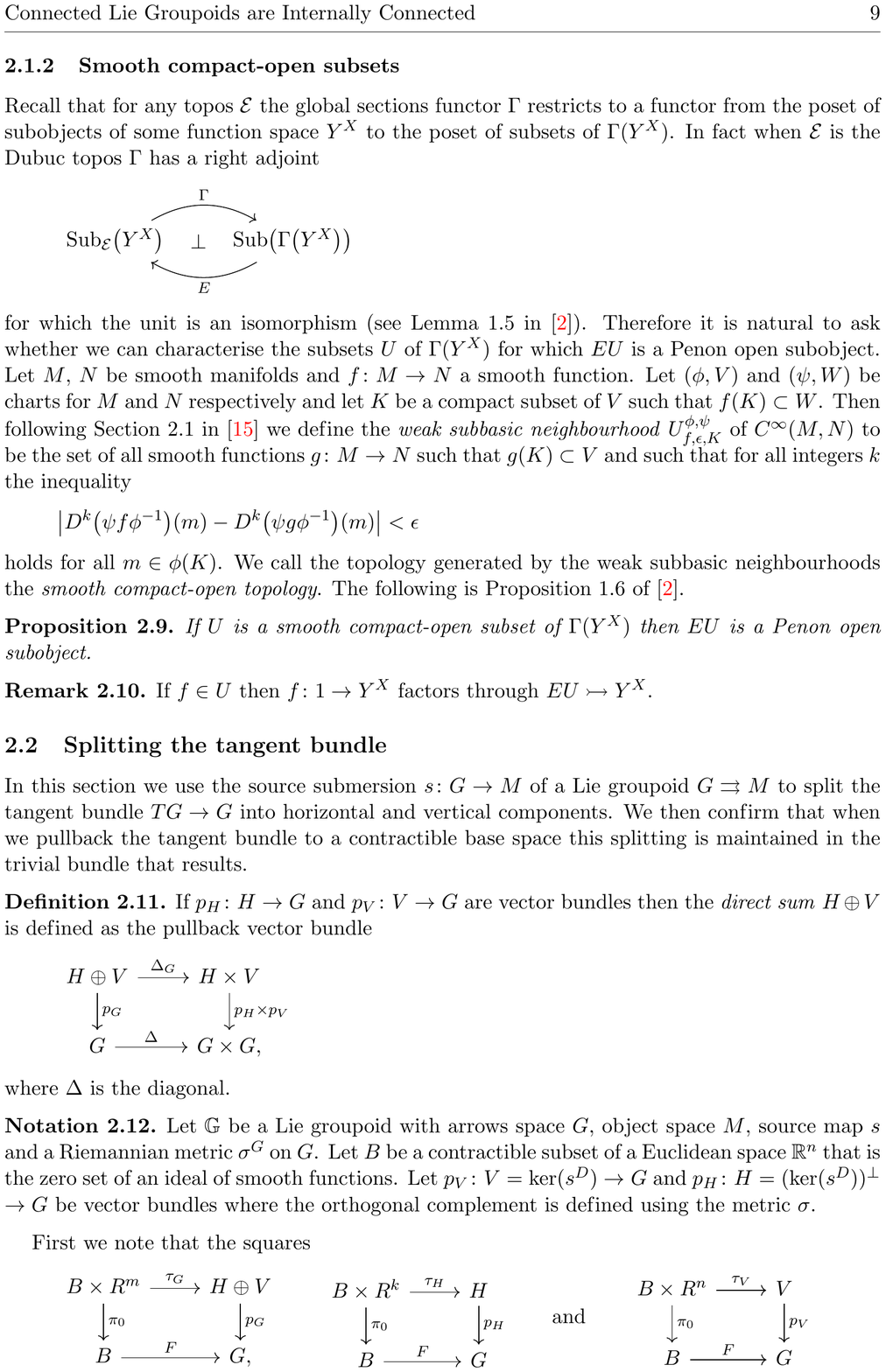}
\end{equation*}
for which the unit is an isomorphism (see Lemma~1.5 in~\cite{MR796353}). Therefore it is natural to ask whether we can characterise the subsets~$U$ of $\Gamma(Y^X)$ for which $EU$ is a~Penon open subobject. Let~$M$,~$N$ be smooth manifolds and $ f \colon M \rightarrow N $ a smooth function. Let $ ( \phi , V ) $ and $ ( \psi , W ) $ be charts for $ M $ and $ N $ respectively and let $ K $ be a compact subset of $ V $ such that $ f ( K ) \subset W $. Then following Section~2.1 in~\cite{MR0448362} we def\/ine the \emph{weak subbasic neighbourhood} $ U_{f , \epsilon, K } ^{ \phi , \psi} $ of $ C^{ \infty } ( M , N ) $ to be the set of all smooth functions $ g \colon M \rightarrow N $ such that $ g ( K ) \subset V $ and such that for all integers~$ k $ the inequality
\begin{gather*}\big|D^k\big(\psi f \phi^{-1}\big) (m) - D^k \big( \psi g \phi^{-1} \big) (m)\big| < \epsilon\end{gather*}
holds for all $m\in\phi(K)$. We call the topology generated by the weak subbasic neighbourhoods the \emph{smooth compact-open topology}. The following is Proposition~1.6 of~\cite{MR796353}.
\begin{Proposition}\label{prop:compact-open-implies-penon}
If $U$ is a smooth compact-open subset of $\Gamma(Y^X)$ then $EU$ is a Penon open subobject.
\end{Proposition}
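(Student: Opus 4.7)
The plan is to unwind the Kripke--Joyal semantics for Penon openness of $EU$ and produce, from any generalised element $u\in_Z EU$ and any $x\in_Z Y^X$ at any stage of definition $Z$, a Dubuc cover of $Z$ on which either $x$ factors through $EU$ or $u$ is apart from $x$. The core idea is to convert the classical smooth compact-open distance into an internal smooth element of $R$ and exploit the field-of-fractions property \eqref{field-of-fractions}.

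First I would reduce to the case of a subbasic weak neighbourhood $U=U_{g,\epsilon,K}^{\phi,\psi}$, using that every smooth compact-open open is a union of finite intersections of such subbasics, that Penon opens are closed under finite intersections and arbitrary unions (both are direct intuitionistic checks), and that the unit isomorphism $V\cong E\Gamma V$ in the Dubuc topos ensures $E$ interacts with these lattice operations in the way one expects. Second, for a fixed subbasic $U$, I would pass to the exponential transposes $\hat u,\hat x\colon Z\times X\to Y$, choose a finite mesh $\mathcal{M}\subset\phi(K)$ via compactness of $K$ and a derivative bound $N$, and form the smooth real-valued function
\begin{gather*}
\delta\colon \ Z\to R,\qquad \delta(z)=\max_{m\in\mathcal{M},\,0\leq k\leq N}\big|D^k\bigl(\psi \hat{x}(z,\cdot) \phi^{-1}\bigr)(m)-D^k\bigl(\psi g \phi^{-1}\bigr)(m)\big|.
\end{gather*}
With the mesh chosen sufficiently fine, $\delta(z)<\epsilon$ forces $\hat{x}(z,\cdot)\in U$ via uniform continuity of the relevant derivatives on the compact $K$. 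The two Dubuc open subobjects
\begin{gather*}
Z_1=\{z\in Z\colon \delta(z)<\epsilon\},\qquad Z_2=\{z\in Z\colon \delta(z)>\epsilon/2\}
\end{gather*}
are jointly surjective on the classical points of $Z$, and so after reducing $Z$ to a representable via a Dubuc cover they form a Dubuc cover of $Z$. On $Z_1$ the restriction $x|_{Z_1}$ factors through $EU$ directly; on $Z_2$ the quantity $\delta-\epsilon/2$ is invertible in $R$, so \eqref{field-of-fractions} lets one extract from the maximum a classical pair $(m_0,k_0)$ at which the corresponding derivative of $\hat x-\hat u$ is non-infinitesimal at $m_0$, delivering the required apartness $\neg(u|_{Z_2}=x|_{Z_2})$.

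The main obstacle I expect is faithfully representing the subbasic condition --- a supremum over a continuous parameter and, as stated in the paper, over all derivative orders --- by a single smooth function in $R^Z$. This requires combining compactness of $K$ (to replace the sup over $\phi(K)$ by a maximum over a finite mesh with controlled slack) with the intersection-closure of Penon opens applied to the subbasic rewritten as a countable intersection of neighbourhoods bounding only finitely many derivative orders at a time. A secondary technical point is verifying that $\{Z_1,Z_2\}$ generates a Dubuc covering sieve: this reduces, via the site defined in Section~\ref{sec:dubuc-topos}, to checking joint surjectivity of the two characteristic smooth functions at every classical point of $Z$, which is immediate from the trichotomy $\delta(z)<\epsilon$ or $\delta(z)>\epsilon/2$ on $\mathbb{R}$.
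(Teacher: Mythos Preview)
The paper does not supply its own proof of this proposition: it is stated as Proposition~1.6 of Bruno~\cite{MR796353} and simply quoted. So there is nothing in the paper to compare your argument against beyond that citation.

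Your outline has the right shape---turn the analytic data of a subbasic neighbourhood into an internal element of~$R$ and split the stage using the order on~$R$---but the $Z_2$ branch does not go through as written. Your $\delta$ measures the deviation of $\hat x(z,\cdot)$ from the \emph{fixed classical centre}~$g$ of the subbasic $U_{g,\epsilon,K}^{\phi,\psi}$, whereas the apartness you need on $Z_2$ is between $x$ and the generalised element~$u$. You then assert that on $Z_2=\{\delta>\epsilon/2\}$ one can extract a point at which a derivative of $\hat x-\hat u$ is non-infinitesimal; but nothing you have said relates $\hat u$ to~$g$ at stage~$Z$. Concretely, a classical point $z\in Z_2$ with $\delta(z)\in(\epsilon/2,\epsilon)$ can perfectly well have $\hat u(z,\cdot)=\hat x(z,\cdot)$, since both then lie in~$U$; so $\neg(u=x)$ fails there. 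What is missing is an internal characterisation of membership in $EU$ at an arbitrary stage~$Z$: you need that $u\in_Z EU$ forces the \emph{internal} distance from $\hat u$ to~$g$ to satisfy an inequality in $R^Z$, so that a triangle inequality can separate $x$ from~$u$. Pinning down that description of $EU$ is precisely the substance of Bruno's result, and your sketch assumes it rather than proving it.

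There is also a smaller gap in the reduction step. The functor $E$ is a right adjoint, so it preserves finite intersections, but there is no reason for it to preserve the arbitrary unions needed to rebuild a general compact-open $U=\bigcup_\alpha B_\alpha$ from its basics: one only gets $\bigcup_\alpha EB_\alpha \leq EU$, and the reverse inclusion is not automatic. Hence even granting Penon openness of each $EB_\alpha$, Penon openness of $EU$ does not follow from closure of Penon opens under unions without further argument.
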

\begin{Remark}If $f\in U$ then $f\colon 1 \rightarrow Y^X$ factors through $EU \rightarrowtail Y^X$.
\end{Remark}

\subsection{Splitting the tangent bundle}

In this section we use the source submersion $s\colon G\rightarrow M$ of a Lie groupoid $G \rightrightarrows M $ to split the tangent bundle $TG\rightarrow G$ into horizontal and vertical components. We then conf\/irm that when we pullback the tangent bundle to a contractible base space this splitting is maintained in the trivial bundle that results.

\begin{Definition}If $p_H\colon H\rightarrow G$ and $p_V\colon V\rightarrow G$ are vector bundles then the \emph{direct sum} $H\oplus V$ is def\/ined as the pullback vector bundle
\begin{equation*}
\begin{tikzcd}
H\oplus V \dar{p_G} \rar{\Delta_G} & H\times V \dar{p_H\times p_V}\\
G \rar{\Delta} & G\times G,
\end{tikzcd}
\end{equation*}
where $\Delta$ is the diagonal.
\end{Definition}

\begin{Notation}Let $\mathbb{G}$ be a Lie groupoid with arrows space $G$, object space $M$, source map~$s$ and a Riemannian metric $\sigma^G$ on~$G$. Let~$B$ be a~contractible subset of a Euclidean space $\mathbb{R}^n$ that is the zero set of an ideal of smooth functions. Let $p_V\colon V=\ker(s^D)\rightarrow G$ and $p_H\colon H=(\ker(s^D))^{\perp}$ $\rightarrow G$ be vector bundles where the orthogonal complement is def\/ined using the metric~$\sigma$.
\end{Notation}

First we note that the squares
\begin{equation*}
\begin{tikzcd}
B\times R^m \rar{\tau_G} \dar{\pi_0} & H\oplus V \dar{p_G}\\
B \rar{F} & G,
\end{tikzcd}
\qquad
\begin{tikzcd}
B\times R^k \rar{\tau_H} \dar{\pi_0} & H \dar{p_H}\\
B \rar{F} & G
\end{tikzcd}
\qquad \text{and}\qquad
\begin{tikzcd}
B\times R^n \rar{\tau_V} \dar{\pi_0} & V \dar{p_V}\\
B \rar{F} & G
\end{tikzcd}
\end{equation*}
are pullbacks for some natural numbers $m$, $k$ and $n$ because $B$ is contractible. By construction $TG\cong H\oplus V$ as vector bundles.

\begin{Lemma}\label{lem:split-s}
The square
\begin{equation*}\begin{tikzcd}
B\times R^k \times R^n \rar{\tau_G} \dar{\pi_{0,1}} & H\oplus V \dar{\pi_0\circ\Delta_G}\\
B\times R^k \rar{\tau_H} & H
\end{tikzcd}\end{equation*}
commutes.
\end{Lemma}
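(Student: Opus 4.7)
The plan is to identify $\tau_G$ with the trivialisation of $F^\ast(H\oplus V)$ induced by $\tau_H$ and $\tau_V$ via the universal property of the pullback that defines $H\oplus V$, after which commutativity is immediate. Since $B$ is contractible, any two trivialisations of $F^\ast(H\oplus V)\to B$ differ by a bundle automorphism, so the content of the lemma is really that one may (and should) make this compatible choice.

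First I would record that the two pullback squares preceding the lemma yield the identities $p_H\tau_H = F\pi_0$ and $p_V\tau_V = F\pi_0$. Precomposing these with $\pi_{0,1}\colon B\times R^k\times R^n\to B\times R^k$ and $\pi_{0,2}\colon B\times R^k\times R^n\to B\times R^n$, one obtains
\begin{equation*}
p_H\circ(\tau_H\pi_{0,1}) \;=\; F\pi_0 \;=\; p_V\circ(\tau_V\pi_{0,2}),
\end{equation*}
so that the pair $(\tau_H\pi_{0,1},\tau_V\pi_{0,2})\colon B\times R^k\times R^n\to H\times V$ is equalised by $p_H\times p_V$ followed by the diagonal $\Delta$. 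The universal property of the pullback in the definition of $H\oplus V$ then produces a unique arrow
\begin{equation*}
\theta\colon B\times R^k\times R^n \longrightarrow H\oplus V
\end{equation*}
with $\Delta_G\circ\theta = (\tau_H\pi_{0,1},\tau_V\pi_{0,2})$ and $p_G\circ\theta = F\pi_0$.

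Next I would verify that $\theta$ is a fibrewise linear isomorphism, and hence a trivialisation of $F^\ast(H\oplus V)$: on the fibre over $b\in B$ it sends $(x,y)$ to the pair $(\tau_H(b,x),\tau_V(b,y))$ living in the fibre of $H\oplus V$ over $F(b)$, and this is a linear isomorphism because $\tau_H$ and $\tau_V$ are. In particular the rank of $H\oplus V$ is $k+n$, so one may take $m=k+n$ and $\tau_G = \theta$. With this identification,
\begin{equation*}
\pi_0\circ\Delta_G\circ\tau_G \;=\; \pi_0\circ(\tau_H\pi_{0,1},\tau_V\pi_{0,2}) \;=\; \tau_H\circ\pi_{0,1},
\end{equation*}
which is exactly the commutativity asserted by the square.

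The main obstacle, such as it is, is purely one of bookkeeping: $\tau_G$ is only determined up to a bundle automorphism of $B\times R^{k+n}$, so the argument rests on singling out the direct-sum trivialisation coming from $\tau_H$ and $\tau_V$ rather than an arbitrary one. Once this choice is made the conclusion is formal, and in the sequel one should use the same choice whenever $\tau_G$ is invoked alongside $\tau_H$ and $\tau_V$.
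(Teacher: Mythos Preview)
Your proposal is correct and follows essentially the same approach as the paper: construct the canonical map into $H\oplus V$ from the pair $(\tau_H\pi_{0,1},\tau_V\pi_{0,2})$ using the universal property of the defining pullback, identify this map with $\tau_G$, and read off the commutativity. The only cosmetic difference is that the paper packages the verification that this map is a legitimate choice of $\tau_G$ as a two-out-of-three pullback argument in a commutative cube, whereas you check fibrewise that it is a linear isomorphism; both establish the same fact. Your explicit acknowledgement that $\tau_G$ must be \emph{chosen} as this direct-sum trivialisation (rather than an arbitrary one) is a point the paper leaves implicit in the line ``$\psi=\tau_G$''.
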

\begin{proof}There is a unique $\psi\colon B\times R^k\times R^n\rightarrow H\oplus V$ making
\begin{equation*}
\begin{tikzcd}
B\times R^k\times R^n \arrow{ddd}{\pi_0} \drar[dashed]{\psi} \arrow{rrr}{(b,v,b,u)} &&& (B\times R^k)\times(B\times R^n) \arrow{ddd}{\pi_0\times\pi_0} \dlar{\tau_H\times\tau_V}\\
& H\oplus V \dar{p} \rar{\Delta_G} & H\times V \dar{p_H\times p_V} &\\
& G \rar{\Delta} & G\times G & \\
B \urar{F} \arrow{rrr}{\Delta} &&& B\times B \ular{F\times F}
\end{tikzcd}
\end{equation*}
into a commutative cube because the centre square is a pullback. Furthermore the right and outer squares are easily seen to be pullbacks. This means that the left square is a pullback and $\psi=\tau_G$. Now the result follows from the fact that
\begin{equation*}\begin{tikzcd}
B\times R^k \times R^n \rar{(b,v,b,u)} \dar{\tau_G} & (B\times R^k)\times(B\times R^n) \dar{\tau_H\times \tau_V} \rar{\pi_0} &B\times R^k \dar{\tau_H} &\\
H\oplus V \rar{\Delta_G} & H\times V \rar{\pi_0} & H
\end{tikzcd}\end{equation*}
commutes.
\end{proof}

\begin{Corollary}\label{cor:split-tangent}
There is an isomorphism $\alpha$ making
\begin{equation*}\begin{tikzcd}
B\times R^k\times R^n \rar{\tau_G} \dar{\pi_{0,1}}& G^D \dar{s^D}\\
B\times R^k \rar{\alpha\circ\tau_H} & M^D.
\end{tikzcd}\end{equation*}
\end{Corollary}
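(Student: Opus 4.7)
The plan is to factor $s^D$ through the horizontal projection $\pi_0\circ\Delta_G\colon H\oplus V\to H$ using the defining property of $V$, and then invoke Lemma \ref{lem:split-s} to obtain the required commutativity. First I would recall, via the Kock--Lawvere axiom and the identification $\iota(TG)\cong G^D$, that the Riemannian splitting $TG\cong H\oplus V$ yields $G^D\cong H\oplus V$, and regard $\tau_G$ as landing in $G^D$ under this identification. Under these identifications $s^D\colon G^D\to M^D$ corresponds to $ds\colon TG\to TM$, whose kernel is by definition the vertical bundle $V$; hence $s^D$ factors uniquely through $\pi_0\circ\Delta_G$ as $s^D=\alpha\circ(\pi_0\circ\Delta_G)$ for a unique $\alpha\colon H\to M^D$.

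Next I would combine this factorisation with Lemma \ref{lem:split-s}: stringing the two commutative squares together gives
\begin{gather*}
s^D\circ\tau_G \;=\; \alpha\circ(\pi_0\circ\Delta_G)\circ\tau_G \;=\; \alpha\circ\tau_H\circ\pi_{0,1},
\end{gather*}
which is precisely the commutativity claim of the corollary. To see that $\alpha$ is an isomorphism I would appeal to the submersivity of $s$: fibrewise $s^D_g$ surjects onto $T_{s(g)}M$ with kernel $V_g$, so its restriction to $H_g=V_g^{\perp}$ is a linear isomorphism, and pulling back along $F\colon B\to G$ then matches the trivialisation $\tau_H$ with the canonical trivialisation of $(sF)^{*}TM$.

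The main obstacle I anticipate is keeping track of what \emph{isomorphism} means for $\alpha$: as a bundle map it covers the submersion $s\colon G\to M$, so its source $H$ and target $M^D$ sit over different base spaces. The resolution is to interpret the claim after pulling back to the contractible base $B$, where both bundles become trivial of rank $k$ and the induced map between their trivialisations is an honest isomorphism; the book-keeping of the Kock--Lawvere identifications between $G^D$, $M^D$ and the classical tangent bundles must be handled carefully throughout.
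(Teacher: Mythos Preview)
Your proposal is correct and follows essentially the same route as the paper: factor $s^D$ through the horizontal projection $\pi_0\circ\Delta_G\colon H\oplus V\to H$ (the paper justifies this tersely by ``since $s^D$ is an epimorphism'', while you spell out that $V=\ker(s^D)$ and invoke submersivity fibrewise), and then paste with Lemma~\ref{lem:split-s}. Your closing remarks about the meaning of ``isomorphism'' for a bundle map covering $s$ are a legitimate clarification of something the paper leaves implicit, but they do not alter the argument.
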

\begin{proof}
Since $s^D$ is an epimorphism there is an isomorphism $\alpha$ making
\begin{equation*}\begin{tikzcd}
H\oplus V \arrow{rr}{s^D} \drar[swap]{\pi_0\circ \Delta_G} &&M^D \\
&H \urar{\alpha}&
\end{tikzcd}\end{equation*}
commute. Then result follows immediately from Lemma~\ref{lem:split-s}.
\end{proof}

\subsection{Riemannian submersions}

In order to transfer f\/illers between neighbouring source f\/ibres we need to know how to transport them in parallel to the source f\/ibres. To do this we will use the exponential map on the arrow space $G$ induced by a Riemannian metric on~$G$. However it is not in general true that for arbitrary Riemannian metrics~$\eta^G$ and~$\eta^M$ on~$G$ and~$M$ respectively that~$s$ maps geodesics with respect to~$\eta^G$ to geodesics with respect to~$\eta^M$.

We now recall a little of the theory of Riemannian submersions which will allow us to construct Riemannian metrics $\sigma^G$ on $G$ and $\sigma^M$ on $M$ such that $s$ maps geodesics with respect to $\sigma^G$ to geodesics with respect to $\sigma^M$.

\begin{Notation} Let $G$ and $M$ be smooth manifolds with Riemannian metrics $\eta^G$ and $\eta^M$ respectively. Let $s\colon G\rightarrow M$ be a submersion. We write $\ker(s)=V$ for the sub-bundle of the tangent bundle $G^D$ that is parallel to the $s$-f\/ibres and $H=(\ker(s))^{\perp}$ for the bundle orthogonal to $V$ with respect to the Riemannian metric $\eta^G$. Let $U^G$ and $U^M$ denote the domains of the exponential maps associated to $\eta^G$ and $\eta^M$ respectively.
\end{Notation}

The next def\/inition is part of the Def\/inition 26.9 in \cite{Michor2008}.

\begin{Definition} The submersion $s$ is a \emph{Riemannian submersion} if\/f
\begin{gather*}\big(s^D|_H\big)_g\colon \ H_g \rightarrow \big(M^D\big)_{s(g)}\end{gather*}
is an isometric isomorphism.
\end{Definition}

The next result is Lemma 2.1.1 in~\cite{delHoyo2015}.

\begin{Lemma} If $s\colon G \rightarrow M$ is a submersion then we can choose Riemannian metrics~$\sigma^G$ and~$\sigma^M$ on~$G$ and~$M$ respectively that make $s$ a Riemannian submersion.
\end{Lemma}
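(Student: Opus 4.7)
The plan is to build $\sigma^M$ and $\sigma^G$ by a direct construction rather than by a partition-of-unity patch on the space of all metrics, which is cleaner because averaging metrics does not automatically preserve the Riemannian submersion property. First I would invoke paracompactness of $M$ to fix an arbitrary Riemannian metric, which I immediately name $\sigma^M$, and independently fix an auxiliary Riemannian metric $\eta^G$ on $G$. The vertical bundle $V = \ker(s^D)$ is intrinsic to the submersion, and $\eta^G$ provides a smooth orthogonal splitting $G^D \cong H \oplus V$ with $H = V^{\perp_{\eta^G}}$. Since $s$ is a submersion, the restriction $(s^D|_H)_g \colon H_g \to (M^D)_{s(g)}$ is a fibrewise linear isomorphism — surjective because $s^D$ is surjective and injective because its kernel in $G^D$ is exactly $V$ — and this isomorphism varies smoothly in $g$ by the constant-rank theorem.

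Next I would transport $\sigma^M$ along this isomorphism to define the desired metric on $G$. Concretely, declare $\sigma^G$ to be the unique smooth symmetric bilinear form on $G^D$ determined by
\begin{gather*}
\sigma^G|_V \ :=\ \eta^G|_V, \qquad \sigma^G|_H \ :=\ \bigl(s^D|_H\bigr)^{*}\sigma^M, \qquad H \ \perp_{\sigma^G}\ V.
\end{gather*}
Positive-definiteness follows because both summands are positive-definite and orthogonal; smoothness follows from smoothness of the projection onto $H$ induced by $\eta^G$ together with smoothness of $s^D|_H$. Although $\sigma^G$ differs from $\eta^G$ on $H$, the decomposition $G^D = H \oplus V$ remains $\sigma^G$-orthogonal by fiat, so the $\sigma^G$-orthogonal complement of $\ker(s^D)$ is again $H$, matching the bundle appearing in the definition of Riemannian submersion.

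Verifying the conclusion is then immediate: by construction $\sigma^G|_H = (s^D|_H)^{*}\sigma^M$, so $(s^D|_H)_g$ is tautologically an isometric isomorphism from $(H_g,\sigma^G_g)$ onto $((M^D)_{s(g)},\sigma^M_{s(g)})$, which is the defining property. The only non-algebraic point that I expect to require care is confirming that $\sigma^G$ is a smooth tensor field — that is, that the orthogonal projector $G^D \to H$ determined by $\eta^G$ and the inverse of $s^D|_H$ are both smooth — but this is standard and follows from the local normal form for submersions together with the smoothness of orthogonal projection onto a smooth sub-bundle. No synthetic machinery is needed for this lemma, which is why the authors reference \cite{delHoyo2015} rather than reprove it from scratch.
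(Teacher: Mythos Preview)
Your argument is correct and matches the paper's own proof essentially line for line: choose auxiliary metrics on $G$ and $M$, use the one on $G$ to split $G^D = H\oplus V$, pull back the metric on $M$ to $H$ via the isomorphism $s^D|_H$, keep $\eta^G$ on $V$, and declare $H\perp V$. You are in fact slightly more careful than the paper (which calls $s^D|_H$ an ``isometry'' before any metric on $H$ has been fixed) in explicitly checking that $H$ remains the $\sigma^G$-orthogonal complement of $\ker(s^D)$ and in flagging the smoothness verification.
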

\begin{proof} To begin with choose arbitrary Riemannian metrics $\eta^G$ and $\eta^M$ on $G$ and $M$ respectively. Use $\eta^G$ to decompose $G^D = (\ker(s))^{\perp}\oplus \ker(s)=H\oplus V$. Now we can def\/ine an alternative positive def\/inite inner product $\sigma^H$ on $H$ as the pullback of $\eta^M$ along the isometry $(s^D|_H)$. Also we can restrict the Riemannian metric~$\eta^G$ to a~positive def\/inite inner products $\sigma^V$ on~$V$. Then we def\/ine a new Riemannian metric~$\sigma^G$ on~$G$ by declaring all vectors in~$H$ to be orthogonal to all vectors in~$V$. By construction~$s$ is a Riemannian submersion with respect to~$\sigma^G$ and~$\eta^M$.
\end{proof}

\begin{Lemma}\label{lem:geodesics-preserved}
If $s\colon G \rightarrow M$ is a Riemannian submersion and $c\colon [0,1]\rightarrow G$ is a geodesic in~$G$ such that $c'(0)\in H$ then $s\circ c$ is a geodesic in~$M$. Furthermore if $a\in [0,1]$ then $c'(a)\in H$.
\end{Lemma}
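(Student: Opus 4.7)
The plan is to reduce both assertions — that $s\circ c$ is a geodesic in $M$ and that horizontality of $c$ is preserved — to the classical horizontal lift theorem for Riemannian submersions combined with the uniqueness of geodesics with prescribed initial data. The statement concerns classical smooth objects, so we may work with the Levi-Civita connections on $G$ and $M$ directly, and need not appeal to any synthetic machinery.

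First I would set $v_0 = (s^D)_{c(0)}(c'(0))\in (M^D)_{s(c(0))}$ and invoke the existence theorem for geodesics on $M$ to obtain a geodesic $\gamma_0\colon [0,\varepsilon)\rightarrow M$ with $\gamma_0(0)=s(c(0))$ and $\gamma_0'(0)=v_0$. Next I would use the horizontal lifting property of the submersion $s$ to obtain the unique horizontal curve $\tilde\gamma_0\colon[0,\varepsilon)\rightarrow G$ satisfying $\tilde\gamma_0(0)=c(0)$ and $s\circ\tilde\gamma_0=\gamma_0$. The crucial ingredient is O'Neill's theorem on Riemannian submersions: the horizontal lift of a geodesic in $M$ through any point of the corresponding fibre is itself a geodesic in $G$. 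Hence $\tilde\gamma_0$ is a geodesic in $G$ starting at $c(0)$, and its initial velocity is the horizontal lift of $v_0$, which coincides with $c'(0)$ because $c'(0)\in H$ and $(s^D|_H)_{c(0)}$ is an isometric isomorphism. By uniqueness of geodesics with given initial data, $c=\tilde\gamma_0$ on $[0,\varepsilon)$; a standard open-closed argument then propagates this equality to all of $[0,1]$. Consequently $c'(a)\in H$ for every $a\in[0,1]$, and $s\circ c=\gamma_0$ is a geodesic in~$M$.

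The main obstacle is the appeal to O'Neill's theorem that horizontal lifts of geodesics are geodesics; this is where the Riemannian submersion hypothesis enters essentially, since without the isometry condition on $(s^D|_H)$ there is no control over the horizontal component of the covariant derivative along $c$. In the classical smooth setting this is a standard textbook fact, available for instance from the material in~\cite{Michor2008}, and can be quoted without rederivation.
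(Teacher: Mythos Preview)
Your argument is correct and is essentially a spelled-out version of what the paper does: the paper's entire proof is the single citation ``This is Corollary~26.12 in~\cite{Michor2008}.'' You have unpacked that reference into the standard O'Neill-type argument (horizontal lift of a geodesic is a geodesic, then uniqueness), and you yourself note that the crucial step can simply be quoted from~\cite{Michor2008}, so there is no substantive difference in approach.
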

\begin{proof}
This is Corollary~26.12 in~\cite{Michor2008}.
\end{proof}

\begin{Corollary}\label{exp-and-source}
If $s\colon G \rightarrow M$ is a Riemannian submersion then there exist open subsets $W^M\subset U^M$ and $W^G\subset U^G$ such that
\begin{equation*}
\begin{tikzcd}
W^G \rar{\exp^G} \dar{s^D|_{W^G}} & G \dar{s}\\
W^M \rar{\exp^M} & M
\end{tikzcd}
\end{equation*}
commutes.
\end{Corollary}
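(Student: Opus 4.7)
The plan is to locate $W^G$ inside the horizontal sub-bundle $H$ and invoke Lemma~\ref{lem:geodesics-preserved} pointwise. Since a geodesic in $G$ with horizontal initial velocity projects under $s$ to a geodesic in $M$ whose initial velocity is the image of that vector under $s^D$, the identity $s\circ\exp^G=\exp^M\circ s^D$ will hold automatically on horizontal vectors; the only remaining work is to cut out a suitable open subset on which everything in sight is well-defined.

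I would choose $W^G$ to be an open neighbourhood of the zero section lying inside $H\cap U^G$ and small enough that $s^D(W^G)\subset U^M$; such a choice exists because the zero vectors of $H$ map to zero vectors in $TM$, both $U^G$ and $U^M$ are open neighbourhoods of their respective zero sections, and $s^D|_H$ is continuous. I would then set $W^M:=s^D(W^G)$. Since $s^D|_H$ is fibrewise an isometric isomorphism by the Riemannian submersion hypothesis, and $s$ is a surjective submersion on base spaces, $s^D|_H$ is a local diffeomorphism onto its image in $TM$, so $W^M$ is open in $U^M$. Commutativity is then immediate: for any $v\in W^G$ based at $g\in G$, the curve $c(t):=\exp^G(tv)$ is a geodesic in $G$ with $c'(0)=v\in H_g$, and Lemma~\ref{lem:geodesics-preserved} shows that $s\circ c$ is a geodesic in $M$ with initial velocity $s^D(v)$, so $s(\exp^G(v))=(s\circ c)(1)=\exp^M(s^D(v))$.

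The main obstacle is interpretational rather than computational. The horizontal sub-bundle $H$ is a proper sub-bundle of $TG$, so the phrase ``open subset of $U^G$'' has to be read as an open subset of $H\cap U^G$ (i.e.\ an open subset of the horizontal part of the domain of $\exp^G$) rather than as an open subset of $U^G$ in $TG$: on vectors with a nonzero vertical component one cannot expect $s\circ\exp^G$ and $\exp^M\circ s^D$ to agree, so restricting to $H$ is essential. Once this convention is granted, the proof is simply the integrated form of the lemma on the projection of horizontal geodesics.
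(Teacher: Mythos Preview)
Your approach is correct and matches the paper's own proof, which simply reads ``Immediate from Lemma~\ref{lem:geodesics-preserved}. See also Proposition~5.9 in~\cite{delHoyo2015}.'' You have supplied the details the paper omits, and your observation that $W^G$ must be taken inside the horizontal sub-bundle $H$ (rather than as an open set in the full $TG$) is exactly right---Lemma~\ref{lem:geodesics-preserved} only controls geodesics with horizontal initial velocity.

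One small correction: $s^D|_H\colon H \to TM$ is not a local diffeomorphism in general. A dimension count gives $\dim H = \dim G + \operatorname{rank} H = \dim G + \dim M$, while $\dim TM = 2\dim M$, and these agree only when $\dim G = \dim M$. What you actually get from the fibrewise isometric isomorphism together with $s$ being a submersion on the base is that $s^D|_H$ is a \emph{submersion}, hence an open map; this is enough to conclude that $W^M := s^D(W^G)$ is open in $U^M$, so your argument goes through with this adjustment.
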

\begin{proof}
Immediate from Lemma~\ref{lem:geodesics-preserved}. See also Proposition~5.9 in~\cite{delHoyo2015}.
\end{proof}

\subsection{Constructing a tubular extension}\label{tubular-extension}

In this section we construct a tubular extension $B\times C^k\times C^n\rightarrow G$ for every smooth map $F\colon B\rightarrow G$ where $G$ is the arrow space of a Hausdorf\/f Lie groupoid. In the next two sections we work within this tubular extension to construct the Penon open subobject that we need. We also show that this extension commutes in the appropriate way with the source map.

\begin{Notation}Let $C^n$ denote the open unit hypercube in $R^n$. Let $B$ be a contractible and compact subset of Euclidean space that is the zero set of an ideal of smooth functions.
\end{Notation}

\begin{Lemma}\label{open-inclusions}
If $\overline{W}^G=W^G \cap (s^D)^{-1}(W^M)$ then there are open inclusions $\nu^G\colon C^k\rightarrow R^k$ and $\nu^M\colon C^n\rightarrow R^n$ such that $\nu^G(\vec{0})=\vec{0}$ and $\nu^M(\vec{0})=\vec{0}$ and maps $\iota^G\colon B\times C^k\times C^n\rightarrow \overline{W}^G$ and $\iota^M\colon B\times C^k\rightarrow W^M$ such that
\begin{equation*}\begin{tikzcd}
B\times C^k\times C^n \rar{\iota^G} \dar{B\times \nu^M\times \nu^G} & \overline{W}^G \dar\\
B\times R^k\times R^n \rar{\tau_G} & G^D
\end{tikzcd}\qquad \text{and}\qquad
\begin{tikzcd}
B\times C^k \rar{\iota^M} \dar{B\times \nu^M} & W^M \dar \\
B\times R^k \rar{\alpha\circ\tau_H} & M^D
\end{tikzcd}\end{equation*}
commute.
\end{Lemma}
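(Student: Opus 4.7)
The plan is to obtain $\iota^G$ and $\iota^M$ as restrictions of $\tau_G$ and $\alpha\circ\tau_H$ to sufficiently small open tubes around the zero sections, then to realize these tubes as open hypercubes via the inclusions $\nu^G$ and $\nu^M$. The key geometric input is that the exponential maps are defined on open neighborhoods of the zero sections of $G^D$ and $M^D$, so $W^G$ and $W^M$ are open subsets containing those zero sections, and hence $\overline{W}^G = W^G \cap (s^D)^{-1}(W^M)$ is an open subset of $G^D$ containing the zero section over $F(B)$ (since $s^D$ maps zero vectors to zero vectors).

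First I would observe that the maps $\tau_G$ and $\alpha\circ\tau_H$ carry the points $(b,\vec 0,\vec 0)$ and $(b,\vec 0)$ to zero tangent vectors, so the preimages
\begin{gather*}
\tau_G^{-1}\big(\overline{W}^G\big)\subset B\times R^k\times R^n,\qquad (\alpha\circ\tau_H)^{-1}\big(W^M\big)\subset B\times R^k
\end{gather*}
are open subsets containing $B\times\{\vec 0\}\times\{\vec 0\}$ and $B\times\{\vec 0\}$ respectively. Since $B$ is compact, the tube lemma yields open neighborhoods $U_1\subset R^k$ and $U_2\subset R^n$ of the origins with $B\times U_1\times U_2\subset\tau_G^{-1}(\overline{W}^G)$, and an open neighborhood $U_3\subset R^k$ of $\vec 0$ with $B\times U_3\subset(\alpha\circ\tau_H)^{-1}(W^M)$.

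Next I would take the common refinement in the $R^k$-direction so that a single inclusion handles both squares. Choose an open hypercube $C^k$ centered at $\vec 0$ contained in $U_1\cap U_3$, and an open hypercube $C^n$ centered at $\vec 0$ contained in $U_2$. The resulting inclusions $\nu^G\colon C^k\hookrightarrow R^k$ and $\nu^M\colon C^n\hookrightarrow R^n$ (matching the naming in the statement, keeping in mind the dimensional labels are as given in the diagrams) are open and send $\vec 0$ to $\vec 0$. Define
\begin{gather*}
\iota^G = \tau_G\circ(B\times\nu^M\times\nu^G)\colon B\times C^k\times C^n\rightarrow\overline{W}^G,\qquad
\iota^M = (\alpha\circ\tau_H)\circ(B\times\nu^M)\colon B\times C^k\rightarrow W^M,
\end{gather*}
where the codomain restrictions are justified by the tube inclusions above. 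By construction both squares in the statement commute tautologically.

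The proof is essentially a tube-lemma argument, so there is no deep obstacle; the only subtlety is the coordinated shrinking in the $R^k$-factor so that one and the same inclusion $\nu^M$ works for both diagrams, which is handled by intersecting $U_1$ with $U_3$ before extracting the hypercube. The hypothesis that $B$ is compact is essential here, since without it the tube lemma fails and no uniform open neighborhood of the zero section need exist inside the preimage of $\overline{W}^G$.
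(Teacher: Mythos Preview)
Your argument is correct and follows essentially the same route as the paper: pull back $\overline{W}^G$ and $W^M$ along the bundle trivialisations $\tau_G$ and $\alpha\circ\tau_H$, use compactness of $B$ to find a uniform open neighbourhood of the origin inside each preimage, and fit an open hypercube inside. The paper phrases the compactness step as finding a common ball inside the fibrewise preimages $X_b=\tau_G(b,-,-)^{-1}(\overline{W}^G_b)$ rather than invoking the tube lemma, and it leaves the coordination of the $R^k$-factor implicit, but this is the same argument.
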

\begin{proof} By construction for each $b\in B$ the arrow $\tau_G(b,-,-)\colon R^k\times R^n\rightarrow G^D _b$ is an isomorphism. Therefore $X_b = \tau_G(b,-,-)^{-1}(\overline{W}^G _b)$ specif\/ies a collection of open sets containing $\vec{0}$ in $R^{k+n}$ that vary smoothly with $B$. Since $B$ is compact we can f\/ind an open ball around $\vec{0}$ contained in each of the $X_b$. Now the existence of $\iota^G$ and $\nu^G$ follows easily. The existence of $\iota^M$ and $\nu^M$ follows similarly.
\end{proof}

\begin{Lemma}If $F\colon B\rightarrow G$ is a smooth map that is $s$-constant and starts at an identity arrow then there exist smooth maps $\xi^G\colon B\times C^k\times C^n\rightarrow G$ and $\xi^M\colon B\times C^k\rightarrow M$ such that
\begin{itemize}\itemsep=0pt
\item both $\xi^G(b,0,0)=F(b)$ and $\xi^N(b,0,0)=F(b)$,
\item for all $b\in B$ both $\xi^G(b,-,-)$ and $\xi^M(b,-,-)$ are open inclusions,
\item the diagram
\begin{equation*}\begin{tikzcd}
B\times C^k\times C^n \dar{\pi_{0,1}} \rar{\xi^G} & G \dar{s}\\
B\times C^k \rar{\xi^M} & M
\end{tikzcd}\end{equation*}
commutes.
\end{itemize}
\end{Lemma}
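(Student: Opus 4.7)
The natural construction is to define
\begin{gather*}
\xi^G = \exp^G \circ \iota^G\colon B\times C^k\times C^n \to G,\qquad \xi^M = \exp^M \circ \iota^M\colon B\times C^k \to M,
\end{gather*}
where $\iota^G$ and $\iota^M$ are the maps produced in Lemma~\ref{open-inclusions}, so that $\iota^G$ lands inside $\overline{W}^G\subset W^G\subset U^G$ and $\iota^M$ lands inside $W^M\subset U^M$, i.e.\ inside the domains of the exponential maps. The plan is to verify the three bullet points in turn, extracting the essential content from the splitting and Riemannian-submersion results proved earlier in the section.

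For the first bullet, the key observation is that the map $\tau_G$ comes from the pullback square defining $H\oplus V$, so $\tau_G(b,\vec{0},\vec{0})$ is the zero tangent vector at $F(b)$; pulled back along $\iota^G$ and $B\times \nu^M\times\nu^G$ (which both send $\vec 0$ to $\vec 0$) we get $\iota^G(b,\vec 0,\vec 0)$ is the zero tangent vector at $F(b)$, and $\exp^G$ of the zero vector is the basepoint $F(b)$. The analogous argument gives $\xi^M(b,\vec{0}) = s(F(b))$, the constant basepoint (which is what the first bullet is presumably asserting after the evident typo). The second bullet is standard: $\iota^G(b,-,-)$ is an open inclusion into a neighbourhood of the zero section by Lemma~\ref{open-inclusions}, and $\exp^G$ is a local diffeomorphism on the domain $U^G$, so after possibly shrinking $C^k$ and $C^n$ (and this is harmless, since Lemma~\ref{open-inclusions} leaves us free to pick the hypercubes as small as we like, using compactness of $B$ just as in its proof) the composite is an open inclusion; the argument for $\xi^M$ is identical.

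The third bullet is the content-bearing step and is where the earlier machinery pays off. We chase
\begin{gather*}
s\circ \xi^G = s\circ \exp^G\circ \iota^G = \exp^M \circ s^D\circ \iota^G
\end{gather*}
using Corollary~\ref{exp-and-source}, which is available because $\iota^G$ factors through $\overline{W}^G = W^G\cap (s^D)^{-1}(W^M)$. Then the identification $s^D\circ \iota^G = \iota^M\circ \pi_{0,1}$ (up to precomposition with the inclusions $\nu^G,\nu^M$) is read off from the two commuting squares in Lemma~\ref{open-inclusions} together with Corollary~\ref{cor:split-tangent}, which states precisely that $s^D\circ \tau_G = \alpha\circ \tau_H\circ \pi_{0,1}$. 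Combining these gives $s\circ \xi^G = \exp^M\circ \iota^M\circ \pi_{0,1} = \xi^M\circ \pi_{0,1}$ as required.

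The main obstacle is bookkeeping rather than conceptual: one must keep track of the compositions of the four maps $\iota^G, \iota^M, \nu^G, \nu^M$ and verify that the squares from Lemma~\ref{open-inclusions} genuinely allow the substitution $s^D\circ \iota^G = \iota^M\circ \pi_{0,1}$, and one must choose the hypercube size small enough that $\exp^G\circ \iota^G(b,-,-)$ is injective on each slice (not merely a local diffeomorphism). Neither is difficult — the first is diagram-chasing through the two Lemma~\ref{open-inclusions} squares together with Corollary~\ref{cor:split-tangent}, and the second uses compactness of $B$ exactly as in the proof of Lemma~\ref{open-inclusions} itself.
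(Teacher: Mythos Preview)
Your construction $\xi^G=\exp^G\circ\iota^G$, $\xi^M=\exp^M\circ\iota^M$ and your verification of the third bullet are exactly what the paper does: the paper packages the chase $s^D\circ\iota^G=\iota^M\circ\pi_{0,1}$ into a commuting cube (centre square from Corollary~\ref{cor:split-tangent}, top and bottom from Lemma~\ref{open-inclusions}) and then pastes on Corollary~\ref{exp-and-source}, which is precisely your chain of equalities presented diagrammatically. Your treatment of the first two bullets (including the shrinking argument for injectivity of $\exp^G\circ\iota^G(b,-,-)$) is in fact more explicit than the paper's proof, which focuses only on the commutativity of the square.
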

\begin{proof} If $\overline{W}^G=W^G \cap (s^D)^{-1}(W^M)$ then in the cube
\begin{equation*}\begin{tikzcd}
B\times C^n\times C^k \drar \arrow{rrr}{\iota^G} \arrow{ddd}{\pi_{0,1}} &&& \overline{W}^G \dlar \arrow{ddd}{s^D|_{\overline{W}^G}}\\
&B\times R^k\times R^n \dar{\pi_{0,1}} \rar{\tau_G} & G^D \dar{s^D} &\\
& B\times R^k \rar{\alpha\circ\tau_H} & M^D &\\
B\times C^k \urar \arrow{rrr}{\iota^M} &&& W^M \ular[rightarrowtail]
\end{tikzcd}\end{equation*}
the centre square commutes by Corollary~\ref{cor:split-tangent}, the upper and lower squares commute by Lemma~\ref{open-inclusions} and the left and right squares commute by construction. Therefore the outer square commutes because~$W^M\rightarrowtail M^D$ is a monomorphism. The result now follows from \mbox{pasting} the square shown to commute in Corollary~\ref{exp-and-source} onto the right of the above square; the maps we require are $\xi^G = \exp^G\circ \iota^G$ and $\xi^M = \exp^M\circ\iota^M$.
\end{proof}

\subsection{A subobject of the tubular extension admitting f\/illers}\label{subobject-with-fillers}

In the previous section we constructed a tubular extension $\xi^G _F\colon B\times C^k\times C^n\rightarrow G$ for every $F\in \mathbb{G}^{\nabla B}$. In this section we construct a subobject of $\mathbb{G}^{\nabla \partial B}$ from this tubular extension such that every element of this subobject admits a f\/iller. In the next section we f\/ind a Penon open subobject contained in this subobject.

\begin{Notation} Let $B$ be a subset of Euclidean space that is the zero set of an ideal of smooth functions. Let $f\in \mathbb{G}^{\nabla\partial B}$ have a f\/iller $F\in \mathbb{G}^{\nabla B}$. We write $\xi^G_F$ for the tubular extension constructed in Section~\ref{tubular-extension}.
\end{Notation}

\begin{Remark} For all $\vec{x}_0\in C^k$ the map $\partial B\rightarrow G$ def\/ined by $b\mapsto \xi^G_F(b,\vec{x}_0,\vec{0})$ has f\/iller $B\rightarrow G$ def\/ined by $b\mapsto \xi^G_F(b,\vec{x}_0,\vec{0})$.
\end{Remark}

\begin{Definition} The subobject $T_f \rightarrowtail \mathbb{G}^{\nabla\partial B}$ consists of all $\chi\in \mathbb{G}^{\nabla\partial B}$ such that
\begin{gather*}\forall\, b\in B, \quad \chi(b)\in \xi^G_F\big(b,C^k,C^n\big)\end{gather*}
or equivalently $T_f$ is the subobject of $\mathbb{G}^{\nabla\partial B}$ such that
\begin{gather*}\forall\, b \in B, \quad \exists\, \vec{x}_0\in C^k, \quad \exists\, h_{\chi}\in \big(C^n\big)^{\partial B}, \quad \chi(b)=\xi^G_F\big(b,\vec{x}_0,h_{\chi}(b)\big),\end{gather*}
because $\chi$ is source constant.
\end{Definition}

\begin{Remark}By construction $F(b)=\xi^G_F(b,\vec{0},\vec{0})$. Restricting to $\partial B$ gives that $f(b)=\xi^G_F(b,\vec{0},\vec{0})$ and hence $f\in T_f$.
\end{Remark}

\begin{Lemma}If $\chi\in T_f$ then $\chi$ has a filler $X\in \mathbb{G}^{\nabla B}$.
\end{Lemma}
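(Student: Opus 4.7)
The plan is to construct $X$ explicitly by expressing $\chi$ in tubular coordinates and extending only the ``vertical'' $C^n$-component. From the equivalent formulation of membership in $T_f$ already recorded in the excerpt, one obtains a constant $\vec x_0\in C^k$ and a smooth function $h_\chi\colon \partial B\to C^n$ with $\chi(b)=\xi_F^G(b,\vec x_0,h_\chi(b))$ for every $b\in\partial B$. The crucial point, and really the only non-routine input, is that $\vec x_0$ is a single element of $C^k$ rather than a $b$-dependent function: this follows from the source-constancy of $\chi$ together with the choice of trivialisation $\tau_H$ for $F^{*}H$ so that $\alpha\circ\tau_H\colon B\times R^k\to M^D$ is projection to $R^k$ composed with a fixed linear isomorphism $R^k\to T_{m_F}M$. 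With such $\tau_H$, the map $\xi_F^M(b,\vec x)=\exp^M(\alpha\circ\tau_H(b,\vec x))$ is independent of $b$, so the identity $\xi_F^M(b,\vec x_0(b))=m_\chi$ together with the injectivity of $\xi_F^M(b,-)$ forces $\vec x_0(b)$ to be the same element of $C^k$ for every $b\in\partial B$.

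Next I would extend $h_\chi$ to a smooth map $\tilde h_\chi\colon B\to C^n$. This is standard since $C^n$ is open and convex (hence contractible and star-shaped) and $\partial B\hookrightarrow B$ admits a smooth collar: fix $c\in C^n$, a smooth collar $\partial B\times[0,\epsilon)\hookrightarrow B$ with retraction $\pi$ onto $\partial B$, and a smooth bump $\rho\colon B\to[0,1]$ which is $1$ on $\partial B$ and vanishes outside the collar. Setting
\[
\tilde h_\chi(b):=\rho(b)\,h_\chi(\pi(b))+(1-\rho(b))\,c,
\]
with the first summand understood as $0$ where $\rho$ vanishes, yields a smooth function $B\to C^n$ that agrees with $h_\chi$ on $\partial B$; that it lands in $C^n$ rather than merely in $R^n$ is guaranteed by the convexity of $C^n$.

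Finally I would set $X(b):=\xi_F^G(b,\vec x_0,\tilde h_\chi(b))$ for $b\in B$ and verify the filler axioms. Smoothness is inherited from $\xi_F^G$ and $\tilde h_\chi$; the restriction $X|_{\partial B}=\chi$ holds because $\tilde h_\chi$ extends $h_\chi$ and $\vec x_0$ is used uniformly; $X$ is source-constant with source $m_\chi$ because $s(X(b))=\xi_F^M(b,\vec x_0)=m_\chi$ for every $b$; and $X$ starts at an identity arrow, inherited from $\chi$ at any chosen basepoint of $\partial B$. Hence $X\in \mathbb{G}^{\nabla B}$ is the desired filler.

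The principal obstacle is Step~1, establishing that $\vec x_0$ is constant in $b$. This is exactly the content of the ``because $\chi$ is source constant'' clause in the equivalent formulation of $T_f$, and it depends on having chosen the trivialisation $\tau_H$ so as to make $\xi_F^M$ independent of $b$; without that the level set $\xi_F^M(b,-)^{-1}(m_\chi)$ could leave $C^k$ for $b$ in the interior of $B$. Once $\vec x_0$ is a single element, Step~2 is a routine convex-combination extension and Step~3 is a direct verification.
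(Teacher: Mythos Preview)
Your argument is correct and follows essentially the same route as the paper: both express $\chi$ in the tubular coordinates with constant $\vec x_0$ and exploit the star-shape of $C^n$ to contract the vertical component. The paper's execution is slightly more concise---it writes down the straight-line homotopy $(a,b)\mapsto\xi^G_F\bigl(b,\vec x_0,(1-a)h_\chi(b)\bigr)$ from $\chi$ to $b\mapsto\xi^G_F(b,\vec x_0,\vec 0)$ and then composes with the evident filler $b\mapsto\xi^G_F(b,\vec x_0,\vec 0)$ defined on all of $B$---whereas you reach the same endpoint by extending $h_\chi$ over $B$ via a collar; your Step~1 merely re-derives what the equivalent formulation of $T_f$ already supplies.
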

\begin{proof}If $\chi(b)=\xi^G_F(b,\vec{x}_0,h_{\chi}(b))$ then there is an homotopy from $\chi$ to $(b\mapsto \xi^G_F(b,\vec{x}_0,\vec{0}))$ def\/ined by
\begin{align*}
I\times \partial B &\rightarrow G,\\
(a,b) &\mapsto \xi^G_f\left(b,\vec{x}_0,(1-a)h_{\chi}(b)\right),
\end{align*}
and composing this homotopy with the f\/iller $(b\mapsto \xi^G_F(b,\vec{x}_0,\vec{0}))$ is a f\/iller for $\chi$.
\end{proof}

\subsection{A compact-open set inside a tubular extension}\label{sec:compact-open-in-tubular}

In this section we identify a compact-open set that is contained in space of global sections of~$\mathbb{G}^{\nabla\partial B}$ that is contained in the subobject $T_f$ constructed in Section~\ref{subobject-with-fillers}. Once we have done this we can deduce using Proposition~\ref{prop:compact-open-implies-penon} the existence of a Penon open subobject $V_f$ of $\mathbb{G}^{\nabla\partial B}$ such that all maps in $V_f$ have f\/illers.

\begin{Notation} Let $B$ be a subset of Euclidean space that is the zero set of an ideal of smooth functions. Let $f\in \mathbb{G}^{\nabla\partial B}$ have a f\/iller $F\in \mathbb{G}^{\nabla B}$. We write $\xi^G_F$ for the tubular extension constructed in Section~\ref{tubular-extension}. Let $D^n \rightarrowtail C^n$ be the inclusion of the ball of radius $\frac{1}{2}$ centred at the origin. Let $E^n \rightarrowtail D^n$ be the inclusion of the ball of radius $\frac{1}{4}$ centred at the origin,
\end{Notation}

\begin{Definition}The compact-open subset $W_f$ of $\Gamma(\mathbb{G}^{\nabla\partial B})$ is def\/ined as follows. Let $U_b=f^{-1}\xi^G_F(b,E^k,E^n)$. Now $(U_b)_{b\in \partial B}$ covers $\partial B$ because $b\in U_b$. Since $\partial B$ is compact we can choose $b_1,\dots ,b_n\in \partial B$ such that $(U_{b_i})_{i=1}^n$ covers $\partial B$. The compact-open set $W_f$ that we require is def\/ined by the family $(\overline{U_{b_i}},\xi^G_F(b_i,D^k,D^n))_{i=1}^n$.
\end{Definition}

\begin{Remark}Note that $f\in W_f$ because $f(U_b) = \xi^G_F(b,E^k,E^n)$ and so $f(\overline{U_b}) \subset \xi^G_F(b,D^k,D^n)$.
\end{Remark}

\begin{Lemma} The compact-open set $W_f$ of $\Gamma(\mathbb{G}^{\nabla\partial B})$ is contained in $\Gamma(T_f)$.
\end{Lemma}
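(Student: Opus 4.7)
The plan is to unpack both definitions and argue pointwise. Given $\chi \in W_f$, viewed as a source-constant smooth map $\chi \colon \partial B \to G$, and an arbitrary $b \in \partial B$, the goal is to exhibit $(\vec{x}, \vec{y}) \in C^k \times C^n$ with $\chi(b) = \xi^G_F(b, \vec{x}, \vec{y})$; this is the condition for $\chi$ to lie in $\Gamma(T_f)$.

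Since the family $(U_{b_i})_{i=1}^n$ covers $\partial B$, choose an index $i$ with $b \in U_{b_i} \subset \overline{U_{b_i}}$. The defining condition $\chi(\overline{U_{b_i}}) \subset \xi^G_F(b_i, D^k, D^n)$ of $W_f$ supplies $(\vec{x}_i, \vec{y}_i) \in D^k \times D^n$ with $\chi(b) = \xi^G_F(b_i, \vec{x}_i, \vec{y}_i)$, while the containment $b \in U_{b_i}$ supplies $(\vec{x}_b, \vec{y}_b) \in E^k \times E^n$ with $F(b) = \xi^G_F(b_i, \vec{x}_b, \vec{y}_b)$. Because $\xi^G_F(b, -, -)$ is an open embedding with $\xi^G_F(b, \vec{0}, \vec{0}) = F(b)$, there is a smooth transition diffeomorphism $\Phi_{b_i, b}$ on a neighbourhood of $(\vec{x}_b, \vec{y}_b)$ in $C^k \times C^n$ satisfying $\xi^G_F(b, \Phi_{b_i, b}(\vec{x}, \vec{y})) = \xi^G_F(b_i, \vec{x}, \vec{y})$; to first order, $\Phi_{b_i, b}$ is the translation $(\vec{x}, \vec{y}) \mapsto (\vec{x} - \vec{x}_b, \vec{y} - \vec{y}_b)$.

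The key geometric claim is that $\Phi_{b_i, b}$ is defined at $(\vec{x}_i, \vec{y}_i)$ and carries it into $C^k \times C^n$. Since $(\vec{x}_i, \vec{y}_i)$ lies within distance $1/2$ of the origin and $(\vec{x}_b, \vec{y}_b)$ within distance $1/4$, the image is approximately within distance $3/4$ of the origin, safely inside the open cube of radius $1$, leaving room to absorb the higher-order correction terms of $\Phi_{b_i, b}$. The main obstacle is making this estimate rigorous, since those correction terms depend on the curvature of the particular tubular extension; by compactness of $\partial B$ and smoothness of $\xi^G_F$ one may, if needed, shrink the radii of $E^n$ and $D^n$, or equivalently refine the finite cover $(U_{b_i})$, to guarantee the containment. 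Once the pointwise condition $\chi(b) \in \xi^G_F(b, C^k, C^n)$ is verified for every $b \in \partial B$, it follows that $\chi \in \Gamma(T_f)$ and hence $W_f \subset \Gamma(T_f)$.
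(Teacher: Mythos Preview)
Your argument has a genuine gap at the transition-map step. The assertion that $\Phi_{b_i,b}$ is ``to first order the translation by $-(\vec x_b,\vec y_b)$'' is unjustified: the two charts $\xi^G_F(b,-,-)$ and $\xi^G_F(b_i,-,-)$ are Riemannian normal coordinates centred at \emph{different} points of $G$, and the transition between such charts involves parallel transport and curvature rather than a flat translation; there is no reason its derivative at $(\vec x_b,\vec y_b)$ is close to the identity. More seriously, your proposed remedy of shrinking the radii of $D^n$, $E^n$ or refining the cover is not available here. The lemma concerns the \emph{given} set $W_f$, and in the paper the radii $\tfrac12$ and $\tfrac14$ are fixed in the notation preceding the definition of $W_f$; you cannot retroactively adjust them to make an estimate go through.

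The paper's proof bypasses transition maps entirely by working with the Riemannian metric on $G$. Because $\xi^G_F=\exp^G\circ\iota^G$ and the exponential map carries rays through the origin to unit-speed geodesics, each chart $\xi^G_F(b',-,-)$ preserves distance from the origin. Hence $\chi(b)\in\xi^G_F(b_i,D^k,D^n)$ gives $d(\chi(b),f(b_i))<\tfrac12$, and $f(b)\in\xi^G_F(b_i,E^k,E^n)$ gives $d(f(b_i),f(b))<\tfrac12$; the triangle inequality then yields $d(\chi(b),f(b))<1$. Since $f(b)=\xi^G_F(b,\vec 0,\vec 0)$ and radial distance is preserved in the chart at $b$ as well, this forces $\chi(b)\in\xi^G_F(b,C^k,C^n)$. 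The single ingredient you are missing is precisely this radial-isometry property of $\exp^G$, which replaces your uncontrolled transition map by a metric estimate that works with the fixed radii.
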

 \begin{proof}Let $\chi\in W_f$. For each $b\in \partial B$ there exists at least one $i\in\{1,\dots ,n\}$ such that $b\in U_{b_i}$. For all such $i$ the elements $f(b)$ and $\chi(b)$ are in the open set $\xi^G_F(b_i,D^k,D^n)$ of $G$. Now for each $b\in\partial B$ the map $\iota^G(b,-,-)$ preserves distances. Furthermore since $\xi^G_F=\exp^G\circ \iota^G$ the map $\xi^G_F(b,-,-)$ preserves distances from the origin. Finally recall that $f(b_i)=\xi^G_F(b_i,\vec{0},\vec{0})$. Hence
 \begin{gather*}d(\chi(b),f(b))\leq d(\chi(b),f(b_i))+d(f(b_i),f(b))<\tfrac{1}{2}+\tfrac{1}{2}=1,\end{gather*}
 which tells us that in fact $\chi\in \Gamma(T_f)$.
 So $W_f\subset \Gamma(T_f)$.
 \end{proof}

\begin{Corollary}\label{main-lemma}
If $f\in \mathbb{G}^{\nabla\partial B}$ has filler $F\in \mathbb{G}^{\nabla B}$ then there exists a Penon open subobject $\Phi\colon V_f\rightarrowtail \mathbb{G}^{\nabla\partial B}$ and a lift $\Psi\colon V_f\rightarrow \mathbb{G}^{\nabla B}$ making
\begin{equation*}\begin{tikzcd}
& \mathbb{G}^{\nabla B} \dar{\mathbb{G}^{\iota_B}} \\
V_f \rar[swap]{\Phi_f} \urar[dashed]{\Psi_f} & \mathbb{G}^{\nabla\partial B}
\end{tikzcd}\end{equation*}
 commute.
\end{Corollary}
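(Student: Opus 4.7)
The plan is to take $V_f := E W_f$ where $W_f$ is the smooth compact-open subset of $\Gamma(\mathbb{G}^{\nabla\partial B})$ constructed in Section~\ref{sec:compact-open-in-tubular} and $E$ is the right adjoint to $\Gamma$ from Section~\ref{sec:smooth-compact-open-subsets}. Proposition~\ref{prop:compact-open-implies-penon} immediately gives that $V_f$ is a Penon open subobject of $\mathbb{G}^{\nabla\partial B}$, and the remark following that proposition (together with $f \in W_f$) shows that $f \colon 1 \to \mathbb{G}^{\nabla\partial B}$ factors through the monomorphism $\Phi_f \colon V_f \rightarrowtail \mathbb{G}^{\nabla\partial B}$.

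Next I would verify the containment $V_f \subseteq T_f$ as subobjects of $\mathbb{G}^{\nabla\partial B}$. The preceding lemma already establishes $W_f \subset \Gamma(T_f)$. Applying the right adjoint $E$ preserves this inclusion, giving $V_f = EW_f \rightarrowtail E\Gamma(T_f)$, and then invoking the fact that the unit of $\Gamma\dashv E$ is an isomorphism (so that $T_f\cong E\Gamma(T_f)$ compatibly with the inclusion into $\mathbb{G}^{\nabla\partial B}$) identifies this with a subobject inclusion $V_f \rightarrowtail T_f$ over $\mathbb{G}^{\nabla\partial B}$.

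It then remains to upgrade the pointwise filler construction from the previous lemma into a morphism $T_f \to \mathbb{G}^{\nabla B}$, whose precomposition with $V_f \rightarrowtail T_f$ will serve as the required $\Psi_f$. For any generalised element $\chi \in_X T_f$, the defining condition exhibits $\chi(b) = \xi^G_F(b,\vec x_0, h_\chi(b))$; because $\xi^G_F(b,-,-)$ is an open inclusion for each $b$, its partial inverse on the image is uniquely determined and smooth, so $(\vec x_0,h_\chi)$ is extracted canonically and naturally in $\chi$ (with a single $\vec x_0$ thanks to source-constancy). The uniform formula
\begin{gather*}
\Psi_f(\chi)(b) \;=\; \bigl[(a,b)\mapsto \xi^G_F\bigl(b,\vec x_0,(1-a)h_\chi(b)\bigr)\bigr] \,\cdot\, \bigl[b\mapsto\xi^G_F(b,\vec x_0,\vec 0)\bigr],
\end{gather*}
interpreted as pasting the homotopy onto the global filler of $(b\mapsto \xi^G_F(b,\vec x_0,\vec 0))$ exactly as in the previous lemma, then defines the morphism $\Psi_f$, and the commuting triangle follows because $h_\chi|_{\partial B}$ is returned unchanged by the $a=0$ endpoint of the homotopy.

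The principal obstacle I anticipate is precisely step three: turning the existential data $(\vec x_0,h_\chi)$ in the definition of $T_f$ into genuinely functorial data on generalised elements. This is what forces the use of the open-inclusion property of $\xi^G_F(b,-,-)$ (so that the inverse is canonical and smooth) and the source-constancy of $\chi$ (so that a single $\vec x_0$ suffices); once these are in hand the remaining verifications are formal manipulations of the tubular extension already built in Sections~\ref{tubular-extension}--\ref{subobject-with-fillers}.
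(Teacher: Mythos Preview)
Your proposal is correct and follows the same approach as the paper: set $V_f=E(W_f)$ and invoke Proposition~\ref{prop:compact-open-implies-penon} to obtain the Penon open subobject, with $f\in\Gamma(V_f)$ by construction. The paper's own proof consists of just this definition and remark, leaving the lift $\Psi_f$ entirely implicit; you have supplied the missing steps---deducing $V_f\subseteq T_f$ via the unit isomorphism of the $\Gamma\dashv E$ adjunction, and observing that the filler formula of the preceding Lemma is uniform enough (thanks to the open-inclusion property of each $\xi^G_F(b,-,-)$) to define a genuine morphism in~$\mathcal{E}$---which is exactly what the paper tacitly relies on.
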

\begin{proof}Let $V_f=E(W_f)$ where $E$ is the left adjoint to the global sections functor as in Section~\ref{sec:smooth-compact-open-subsets}. Note that by construction $f\in \Gamma(V_f)$.
\end{proof}

\subsection{Ordinary connectedness implies internal connectedness}\label{sec:ordinary-connectedness-implies-enriched}

Now we are in a position to deduce the main result of this paper. Let $\mathbb{G}$ be a~(Hausdorf\/f) Lie groupoid with arrow space~$G$ and object space~$M$.

\begin{Theorem} If $B$ is a compact and contractible subset of Euclidean space that is the zero set of an ideal of smooth functions then the arrow $\mathbb{G}^{\iota_B}\colon \mathbb{G}^{\nabla B} \rightarrow \mathbb{G}^{\nabla\partial B}$ is an epimorphism.
\end{Theorem}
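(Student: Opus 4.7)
The plan is to verify the epimorphism by a covering argument in the Dubuc topos, leveraging Corollary~\ref{main-lemma}, which already supplies Penon-open lifts around every global element of $\mathbb{G}^{\nabla\partial B}$ admitting a filler. Since the Dubuc site is subcanonical, an arrow of $\mathcal{E}$ is an epimorphism iff, for every generalised element $g\colon yX \to \mathbb{G}^{\nabla\partial B}$ at a representable stage $yX=[n,I]$, there is a Dubuc covering family of $yX$ on whose pieces $g$ factors through $\mathbb{G}^{\iota_B}$.

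Fix such a $g$. Using the filler hypothesis built into the theorem (which in the applications to $\mathcal{E}$-path and $\mathcal{E}$-simply connectedness reduces to the classical source path/simply connectedness of the Hausdorff Lie groupoid $\mathbb{G}$), for each classical point $x\in Z(I)$ the global element $g\circ x\colon 1\to \mathbb{G}^{\nabla\partial B}$ admits a filler. Corollary~\ref{main-lemma} then provides a Penon open $\Phi_{g\circ x}\colon V_{g\circ x}\rightarrowtail \mathbb{G}^{\nabla\partial B}$ through which $g\circ x$ factors, together with a lift $\Psi_{g\circ x}\colon V_{g\circ x}\to \mathbb{G}^{\nabla B}$ satisfying $\mathbb{G}^{\iota_B}\Psi_{g\circ x}=\Phi_{g\circ x}$.

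Next I pull these opens back along $g$. Since Penon opens are stable under pullback, each $U_x := g^{*}V_{g\circ x}$ is a Penon open of $yX$; and since $yX$ is representable, Corollary~\ref{cor:penon-implies-dubuc} (resting on Proposition~\ref{Penon-iff-Dubuc-on-representables}) identifies $U_x$ with a Dubuc open subobject of $yX$. By construction $x\in U_x$, so the family $\{U_x\rightarrowtail yX\}_{x\in Z(I)}$ is jointly surjective on classical points of $yX$ and hence is a member of the Dubuc coverage. On each piece, the composite $U_x\rightarrowtail yX\xrightarrow{g}\mathbb{G}^{\nabla\partial B}$ factors through $V_{g\circ x}$ and so admits the lift $\Psi_{g\circ x}$ restricted along $U_x\to V_{g\circ x}$. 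Assembling these lifts over the cover gives the required local factorisation of $g$ through $\mathbb{G}^{\iota_B}$.

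The main obstacle I anticipate is passing cleanly from the classical-point data to an honest Dubuc cover; this rests on the coincidence, guaranteed by Proposition~\ref{Penon-iff-Dubuc-on-representables}, between jointly-surjective families of Penon opens of a representable and members of the Dubuc coverage. All the geometric work — finding fillers on the underlying Lie groupoid and converting them into Penon-open lifts on neighbourhoods via the tubular-extension and compact-open arguments of Sections~\ref{tubular-extension}--\ref{sec:compact-open-in-tubular} — has been encapsulated in Corollary~\ref{main-lemma}, so beyond the covering step the proof is a formal manipulation.
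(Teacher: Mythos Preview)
Your proposal is correct and follows essentially the same route as the paper's own proof: both reduce the epimorphism check to a representable stage, invoke Corollary~\ref{main-lemma} to produce Penon open lifts around each global element of $\mathbb{G}^{\nabla\partial B}$, pull these back to the representable, and then appeal to Corollary~\ref{cor:penon-implies-dubuc} to convert the resulting Penon cover into a Dubuc cover. The only cosmetic dif\/ference is that you index the cover by classical points $x\in Z(I)$ of the representable (and compose with $g$), whereas the paper indexes by global elements $f\in_1\mathbb{G}^{\nabla\partial B}$ and then pulls back; these amount to the same thing.
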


\begin{proof}We perform a sequence of reductions to show that it in fact suf\/f\/ices to prove Corollary~\ref{main-lemma}.

Firstly, to show that $\mathbb{G}^{\iota_B}$ is an epimorphism, it will suf\/f\/ice to show that for all representable objects $X$ in $\mathcal{E}$ and arrows $\phi\colon X \rightarrow \mathbb{G}^{\nabla\partial B}$ in $\mathcal{E}$ there exists a Dubuc open cover $(\iota_i\colon X_i \rightarrow X)_{i\in I}$ such that for all $i\in I$ there exists a lift $\psi_i$ making
\begin{equation*}
\begin{tikzcd}
 {} & \mathbb{G}^{\nabla B} \dar{\mathbb{G}^{\iota_B}}\\
 X_i \rar[swap]{\phi\iota_i} \urar[dashed]{\psi_i} & \mathbb{G}^{\nabla\partial B}
\end{tikzcd}
\end{equation*}
commute.

In fact it will suf\/f\/ice to f\/ind for each $f\in_1 \mathbb{G}^{\nabla\partial B}$ a Penon open subobject $U_f$ of $\mathbb{G}^{\nabla\partial B}$ containing $f$ and a lift
\begin{equation}\label{main-theorem}
\begin{tikzcd}
 {} & \mathbb{G}^{\nabla B} \dar{\mathbb{G}^{\iota_B}}\\
 U_f \rar[swap]{\phi_f} \urar[dashed]{\psi_f} & \mathbb{G}^{\nabla\partial B}.
\end{tikzcd}
\end{equation}
Indeed $(U_f)_{f\in \mathbb{G}^{\nabla\partial B}}$ covers $\mathbb{G}^{\nabla\partial B}$ as Penon open subobjects and so the pullback cover \linebreak $(\phi^{-1}(U_f))_{f\in\mathbb{G}^{\nabla\partial B}}$ covers $X$ as Penon open subobject. But now we use Corollary~\ref{cor:penon-implies-dubuc} and the fact that $X$ is representable to see that $(U_f)_{f\in \mathbb{G}^{\nabla\partial B}}$ covers $\mathbb{G}^{\nabla\partial B}$ as~Dubuc open subobjects also.

But the existence of $\psi_f$ and a Penon open $\phi_f$ making \eqref{main-theorem} commute is the conclusion of Corollary~\ref{main-lemma}.
\end{proof}

\begin{Corollary}If $\mathbb{G}$ is an $s$-path connected Lie groupoid then the arrow $\mathbb{G}^{\iota_I}\colon \mathbb{G}^{\nabla I}\rightarrow \mathbb{G}^{\mathbf{2}}$ is an epimorphism and so, by definition, the groupoid $\mathbb{G}$ is internally path connected.
\end{Corollary}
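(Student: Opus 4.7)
The plan is to apply the preceding Theorem with $B$ taken to be the closed unit interval $I$; the work is essentially to check that $I$ meets the hypotheses of the Theorem and that $s$-path connectedness supplies the filler data implicit in Corollary~\ref{main-lemma}. First I would note that $I = [0,1]$ is compact, contractible, and may be realised as the zero set of a finitely generated ideal of smooth functions on $\mathbb{R}$, so it is a legitimate object of the Dubuc site on which the tubular-extension construction of Sections~\ref{tubular-extension}--\ref{sec:compact-open-in-tubular} was carried out. Its boundary $\partial I = \{0,1\}$ has pair groupoid equal to the two-object pair groupoid $\mathbf{2}$ appearing in the statement.

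Next I would reconcile $s$-path connectedness with the filler hypothesis of Corollary~\ref{main-lemma}. By the discussion at the start of Section~\ref{sec:enriched-connectedness}, a global element $f\in_1 \mathbb{G}^{\mathbf{2}}$ is a source-constant functor out of $\mathbf{2}$ starting at an identity arrow, which amounts to a single arrow $g\in G$ together with the identity $\mathrm{id}_{s(g)}$ at its source. A filler $F\in_1 \mathbb{G}^{\nabla I}$ is then precisely a source-constant path from $\mathrm{id}_{s(g)}$ to $g$, i.e.\ a path in the source fibre $s^{-1}(s(g))$ joining the identity to $g$. Since $\mathbb{G}$ is $s$-path connected every source fibre is path connected, so such a filler exists for every global section of $\mathbb{G}^{\mathbf{2}}$.

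With fillers in hand for every global $f$, the argument of the Theorem goes through verbatim: Corollary~\ref{main-lemma} produces a Penon open neighbourhood of each $f$ in $\mathbb{G}^{\mathbf{2}}$ together with a lift to $\mathbb{G}^{\nabla I}$; these Penon opens jointly cover $\mathbb{G}^{\mathbf{2}}$; pulled back along any $\phi\colon X \to \mathbb{G}^{\mathbf{2}}$ out of a representable $X$ they furnish a Penon, and hence by Corollary~\ref{cor:penon-implies-dubuc} a Dubuc, open cover of $X$ over which $\phi$ lifts through $\mathbb{G}^{\iota_I}$. This shows that $\mathbb{G}^{\iota_I}$ is an epimorphism in $\mathcal{E}$, and by the definition recalled in Section~\ref{sec:enriched-connectedness} this is precisely the assertion that $\mathbb{G}$ is $\mathcal{E}$-path connected.

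The only step requiring real care is the first one, where $I$ must be presented concretely enough as an object of $\mathcal{C}_{\rm germ}$ that the Riemannian-geometric constructions producing the tubular extension $\xi^G_F$ transfer without modification; since the interval is the paradigmatic example of the bases $B$ envisaged by the Theorem, I do not anticipate any genuine obstacle here.
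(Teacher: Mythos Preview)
Your proposal is correct and matches the paper's intended argument: the corollary is stated without proof because it is just the special case $B=I$ of the preceding Theorem, together with the observation (which you spell out) that $s$-path connectedness provides the global fillers needed to invoke Corollary~\ref{main-lemma}. The extra verification you give that $I$ is compact, contractible, and a zero set of smooth functions, and that $\nabla\partial I\cong\mathbf{2}$, is exactly the routine unpacking the paper leaves to the reader.
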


\begin{Corollary}If $\mathbb{G}$ is an $s$-simply connected Lie groupoid then the arrow $\mathbb{G}^{\iota_{(I\times I)}}\colon \mathbb{G}^{\nabla (I\times I)}\rightarrow \mathbb{G}^{\nabla\partial(I\times I)}$ is an epimorphism and so, by definition, the groupoid $\mathbb{G}$ is internally simply connected.
\end{Corollary}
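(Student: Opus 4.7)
The plan is to reduce the claim to the main Theorem of this section, applied with $B = I \times I$. To do so I need to verify both the explicit hypotheses of the Theorem (that $B$ is a compact and contractible zero set of an ideal of smooth functions on Euclidean space) and the implicit filler hypothesis that the Theorem's proof inherits from Corollary~\ref{main-lemma} (namely, that every global element $f \in_1 \mathbb{G}^{\nabla\partial B}$ has a filler $F \in_1 \mathbb{G}^{\nabla B}$).

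For the first point, $I \times I = [0,1]^2$ is compact as a product of compacts and contractible as a convex subset of $\mathbb{R}^2$. To realise it as a zero set of a finitely generated ideal of $C^\infty(\mathbb{R}^2, \mathbb{R})$, take the standard flat function $\phi(t) = e^{-1/t^2}$ for $t > 0$ and zero otherwise; the ideal generated by $\phi(-x) + \phi(x - 1)$ and $\phi(-y) + \phi(y - 1)$ has zero set exactly $I \times I$. Under the usual abuse of notation, this presents $I \times I$ as a representable object in the Dubuc topos, as the Theorem requires.

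For the second point, unwinding the definitions of Section~\ref{sec:enriched-connectedness}, the $s$-simply-connectedness hypothesis asserts in particular that
\begin{gather*}
\Gamma\big(\mathbb{G}^{\mathbb{I}^2}\big) \xrightarrow{\Gamma(\mathbb{G}^{\iota_{\mathbb{I}^2}})} \Gamma\big(\mathbb{G}^{\partial \mathbb{I}^2}\big)
\end{gather*}
is a surjection of sets. Since $\mathbb{I}^2$ is by definition the pair groupoid $\nabla(I \times I)$ and $\partial\mathbb{I}^2 = \nabla\partial(I \times I)$, this is exactly the statement that every global element $f \in_1 \mathbb{G}^{\nabla\partial(I \times I)}$ lifts along $\mathbb{G}^{\iota_{(I \times I)}}$ to a global element $F \in_1 \mathbb{G}^{\nabla(I \times I)}$, i.e., admits a filler in the sense needed by Corollary~\ref{main-lemma}.

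Granted these two ingredients, the main Theorem immediately gives that $\mathbb{G}^{\iota_{(I \times I)}}\colon \mathbb{G}^{\nabla(I \times I)} \to \mathbb{G}^{\nabla\partial(I \times I)}$ is an epimorphism in $\mathcal{E}$. Combining this with the preceding Corollary — which is applicable because $s$-simply-connectedness of a Lie groupoid includes $s$-path-connectedness — we obtain that $\mathbb{G}$ satisfies both clauses in the definition of $\mathcal{E}$-simple connectedness, so $\mathbb{G}$ is internally simply connected. The only mildly non-trivial step is the explicit presentation of $I \times I$ as a finitely generated zero set of smooth functions; the rest of the argument is essentially bookkeeping, since all of the substantive analysis (Riemannian submersions, tubular extensions, and the passage between compact-open and Penon open subobjects) has already been packaged into the main Theorem.
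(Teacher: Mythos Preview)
Your proposal is correct and follows exactly the route the paper intends: the paper states this Corollary without proof, treating it as the specialisation of the main Theorem to $B=I\times I$, and you have simply spelled out the verifications (compactness, contractibility, zero-set presentation, and the global filler hypothesis coming from $s$-simple-connectedness) that the paper leaves implicit. Your observation that the Theorem's proof tacitly relies on the filler hypothesis of Corollary~\ref{main-lemma}, and your use of the preceding Corollary to obtain $\mathcal{E}$-path-connectedness as well, are exactly what is needed.
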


\section{Properties of the jet part}

\subsection{The inf\/initesimal neighbour relation}\label{sec:infinitesimal-neighbour-relation}

In this section we introduce the inf\/initesimal neighbour relation which is used to def\/ine the jet part of a category in~\cite{burke-relative-to-local-models}. If $\mathbb{C}$ is a category in any well-adapted model $\mathcal{E}$ of synthetic dif\/ferential geometry and $M$ is the space of objects of $\mathbb{C}$ then we def\/ine the inf\/initesimal neighbour relation on objects of the slice topos~$\mathcal{E}/M$. In~\cite{burke-relative-to-local-models} we justify this choice by showing that the jet part def\/ined using this neighbour relation is closed under composition in~$\mathbb{C}$.

Let $a,b\colon X \rightarrow B$ where $X$ and $B$ are objects of the topos~$\mathcal{E}/M$. Then $a\sim b$ if\/f there exists a cover $(\iota_{i}\colon X_{i}\rightarrow X)_{i\in I}$ in ${\mathcal E}/M$ such that for each $i$ there exists an object $D_{W_{i}}\in {\rm Spec}({\rm Weil})$, an arrow $\phi_{i}\colon X_{i}\times D_{W_{i}}\rightarrow B$ and an arrow $d_{i}\colon X_{i}\rightarrow D_{W_{i}}$ such that
\begin{equation*}\begin{tikzcd}
X_{i} \dar{a_{i}} \rar{(1_{X_{i}},0)} & X_{i}\times D_{W_{i}} \dar{\phi_{i}} \\
B \rar{1_{B}} & B
\end{tikzcd}\end{equation*}
and
\begin{equation*}\begin{tikzcd}
X_{i} \dar{b_{i}} \rar{(1_{X_{i}},d_{i})} & X_{i}\times D_{W_{i}} \dar{\phi_{i}} \\
B \rar{1_{B}} & B
\end{tikzcd}\end{equation*}
commute, where we have written $a_i$ and $b_i$ for the restrictions of $a$ and $b$ to $X_i$.
\begin{Remark}
The relation $\sim$ is not always symmetric. In fact in \cite{burke-relative-to-local-models} we see that $\sim$ is not symmetric in the case $B=D$ and $M=1$.
\end{Remark}

The relation $\approx$ is the transitive closure of $\sim$ in the internal logic of $\mathcal{E}/M$. This means that for $a,b\colon X \rightarrow B$ we have $a\approx b$ if\/f there exists a cover $(\iota_{i}\colon X_{i}\rightarrow X)_{i\in I}$ and for each $i$ there exists a natural number $n_{i}$ and elements $x_{i_{0}},x_{i_{1}},\dots ,x_{i_{n_{i}}}\in_{X_{i}}B$ such that
\begin{gather*}a_{i}=x_{i_{0}}\sim x_{i_{1}}\sim\cdots \sim x_{i_{n_{i}}}=b_{i}.\end{gather*}

\subsection{The jet factorisation system and the jet part}

In this section we recall the def\/initions of the jet factorisation system and the jet part of a~groupoid.

An arrow $f\colon A \rightarrow B$ in $\mathcal{E}/M$ is \emph{jet-dense} if\/f for all $b\colon X \rightarrow B$ there exists a cover $(\iota_{i}\colon X_{i}\rightarrow X)_{i\in I}$ and elements $a_{i}\colon X_i \rightarrow A$ such that $f(a_{i})\approx b_{i}$. We have written $b_i$ for the restriction of~$b$ to~$X_i$. An arrow $g\colon A \rightarrow B$ in $\mathcal{E}/M$ is \emph{jet-closed} if\/f it is a monomorphism and for all $a\colon X \rightarrow A$ and $b\colon X \rightarrow B$ such that $ga\approx b$ there exists a cover $(\iota_{i}\colon X_{i}\rightarrow X)_{i\in I}$ and elements $c_{i}\colon X_i \rightarrow A$ such that $a_{i}\approx c_{i}$ and $gc_{i}=b_{i}$. We have written~$a_i$ and~$b_i$ for the restrictions of~$a$ and~$b$ respectively to~$X_i$.

In the case $M=1$ the right class of the jet factorisation system has been studied before. For instance it is the class of formal-etale maps in~I.17 of~\cite{MR2244115}. In fact in Section~1.2 of~\cite{Johnstone2014} is it called the class of formally-open morphisms.
The sense in which these maps are open is ref\/lected in the following corollary that follows immediately from the def\/inition of jet closed.

\begin{Corollary}\label{cor:open-inclusion-is-jet-closed} The inclusion of an open subset $U$ into a manifold $M$ is jet closed.
\end{Corollary}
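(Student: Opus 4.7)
The plan is to verify the two conditions of jet-closedness for $g \colon U \hookrightarrow M$. Monomorphicity is immediate. For the lifting condition, given $a \colon X \to U$ and $b \colon X \to M$ with $ga \approx b$, I would first reduce to the case $ga \sim b$ by unpacking $\approx$ as the transitive closure of $\sim$: locally there is a chain $ga = x_0 \sim x_1 \sim \cdots \sim x_n = b$, and by induction on $n$ one lifts step by step into $U$ using the $\sim$-case, passing to a common refinement of the intermediate covers at each stage.

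Assume then that $ga \sim b$, so on a cover $(\iota_i \colon X_i \to X)_{i \in I}$ I have a Weil spectrum $D_{W_i}$, a map $\phi_i \colon X_i \times D_{W_i} \to M$, and $d_i \colon X_i \to D_{W_i}$ with $\phi_i \circ (1_{X_i}, 0) = ga_i$ and $\phi_i \circ (1_{X_i}, d_i) = b_i$. The central observation is that every open subset inclusion $U \hookrightarrow M$ is Penon open: on a coordinate chart this is Proposition~\ref{Penon-iff-Dubuc-on-representables}, and since Penon openness is expressed in the internal logic it glues along a cover of $M$ by charts (refining the $X_i$ further if necessary).

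Next, I would show that $\phi_i$ itself factors through $U$. Every coordinate of an element $d \in D_{W_i}$ is nilpotent, and nilpotents are $\neg\neg$-zero in the line object because $R$ is a field of fractions (see~\eqref{field-of-fractions}): $\neg(r = 0)$ forces $r$ invertible, which is incompatible with nilpotency. Applying the Kock--Lawvere axiom in a chart around $\phi_i(x, 0)$, the difference $\phi_i(x, d) - \phi_i(x, 0)$ expands as a polynomial in the coordinates of $d$ with vanishing constant term, hence $\neg\neg(\phi_i(x, d) = \phi_i(x, 0))$. Since $\phi_i(x, 0) = ga_i(x) \in U$ and $U$ is Penon open, the remark following the definition of Penon open subobject places $\phi_i(x, d)$ in $U$ as well. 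By monicity of $g$, the map $\phi_i$ lifts uniquely to $\widetilde{\phi}_i \colon X_i \times D_{W_i} \to U$; setting $c_i := \widetilde{\phi}_i \circ (1_{X_i}, d_i)$ furnishes the required lift with $g c_i = b_i$, and the triple $(D_{W_i}, \widetilde{\phi}_i, d_i)$ simultaneously witnesses $a_i \sim c_i$, hence $a_i \approx c_i$.

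The main obstacle is the propagation step $\neg\neg(d = 0) \implies \neg\neg(\phi_i(x, d) = \phi_i(x, 0))$, where the infinitesimal nature of Weil spectra has to be matched against the intuitionistic formulation of Penon openness; everything else is routine bookkeeping with cover refinements together with the universal property of the monomorphism $g$.
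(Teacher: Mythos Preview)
Your argument is correct, and it supplies exactly the details the paper omits: the paper gives no proof beyond the remark that the corollary ``follows immediately from the definition of jet closed,'' so there is nothing substantive to compare against. The route via Penon openness---showing that the infinitesimal jet $\phi_i$ stays inside~$U$ because its base point lies in~$U$---is the natural unpacking of that remark.

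Two small simplifications are available. First, what you flag as the ``main obstacle'' is in fact trivial intuitionistic logic and needs no Kock--Lawvere expansion in charts: since $d\in D_{W_i}$ has nilpotent coordinates, each satisfies $\neg\neg(d_j=0)$ by~\eqref{field-of-fractions}, and $\neg\neg$ commutes with finite conjunctions, so $\neg\neg(d=0)$; then from the tautology $(d=0)\Rightarrow(\phi_i(x,d)=\phi_i(x,0))$ and monotonicity of $\neg\neg$ you get $\neg\neg(\phi_i(x,d)=\phi_i(x,0))$ directly. The polynomial computation is unnecessary. Second, your appeal to Proposition~\ref{Penon-iff-Dubuc-on-representables} is formulated for the Dubuc topos, whereas the corollary is asserted for an arbitrary well-adapted model. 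The needed input---that a classical open subset of a manifold becomes Penon open under~$\iota$---holds in any well-adapted model because~$R$ is a local ring (a sum of an invertible element is invertible in at least one summand), which gives the disjunction $(x\in U)\vee\neg(u=x)$ directly from a characteristic function for~$U$. Either fix is routine; the structure of your proof is sound.
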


Now we recall from \cite{burke-relative-to-local-models} the results about the jet factorisation system that we need in the rest of this paper.

\begin{Lemma}\label{lem:existence-of-w-factorisations}Let $h\colon A\rightarrow E$ be an arrow in $\mathcal{E}/M$. Then there exists a~jet closed arrow~$g$ and a~jet dense arrow~$f$ such that $h=gf$. The mediating object in the factorisation has the following description:
\begin{gather*}B=\{x\in E\colon \exists\, a\in A,\, ha\approx x\}\xrightarrow{g} E.\end{gather*}
\end{Lemma}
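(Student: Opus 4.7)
The plan is to take $g\colon B \hookrightarrow E$ to be the subobject of $E$ in $\mathcal{E}/M$ defined by the displayed comprehension formula in the internal logic. Since $ha \approx ha$ for every generalised element $a$ of $A$, the map $h$ factors uniquely as $A \xrightarrow{f} B \xrightarrow{g} E$, where $f(a) = ha$ viewed as an element of $B$, and $g$ is the canonical inclusion. The heart of the argument is a single \emph{lifting lemma}: if $p\colon Y \to B$ and $q\colon Y \to E$ satisfy $gp \sim q$, then, after refining along a cover $(Y_i \to Y)$, one has $q_i = g \tilde q_i$ for some $\tilde q_i\colon Y_i \to B$, and moreover $p_i \sim \tilde q_i$ holds already in $B$.

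To prove the lifting lemma I would unpack the definition of $\sim$: after refining the cover there are a Weil spectrum $D_W$, a map $\phi\colon Y_i \times D_W \to E$ and $d\colon Y_i \to D_W$ with $\phi(-,0)=gp_i$ and $\phi(-,d)=q_i$. For any generalised element $(y, d')\colon Z \to Y_i \times D_W$, the restricted map $\phi(y,-)\colon Z \times D_W \to E$ together with $d'$ is itself a $\sim$-witness from $\phi(y,0)=gp_i(y)$ to $\phi(y,d')$. Since $p_i(y)\in B$, the defining formula of $B$ provides, after a further cover, some $a'\in A$ with $ha'\approx gp_i(y)$; transitivity of $\approx$ yields $ha'\approx \phi(y,d')$, so $\phi(y,d')\in B$. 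Thus $\phi$ factors through $B \hookrightarrow E$, giving a lifted $\tilde\phi\colon Y_i \times D_W \to B$ that directly witnesses $p_i \sim \tilde q_i$ (with $g\tilde q_i=q_i$, using that $g$ is a monomorphism to identify the lifts at $0$ and at $d$).

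With the lifting lemma in hand, jet-density of $f$ is immediate: for any $b\colon X \to B$, the defining formula of $B$ yields a cover $(X_i \to X)$ and elements $a_i\colon X_i \to A$ with $ha_i \approx gb_i$ in $E$; iterating the lifting lemma along the finite chain of $\sim$-steps witnessing this $\approx$ upgrades it to $f(a_i) \approx b_i$ in $B$. Jet-closedness of $g$ is similar. The map $g$ is a monomorphism by construction, and given $a\colon X \to B$, $b\colon X \to E$ with $ga \approx b$, one iterates the lifting lemma along the $\sim$-chain witnessing $ga \approx b$ to produce, on a refined cover, elements $c_i\colon X_i \to B$ with $gc_i = b_i$ and $a_i \approx c_i$ in $B$, which is exactly what is required.

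The main obstacle is the lifting lemma itself: showing that a Weil-parametrised family $\phi\colon Y_i \times D_W \to E$ witnessing a single $\sim$-step with one endpoint in $B$ must take \emph{all} its values in $B$. Once this is established, everything else is routine bookkeeping in the internal logic of $\mathcal{E}/M$ together with the transitivity of $\approx$.
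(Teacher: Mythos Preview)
The paper does not prove this lemma in-text: its entire proof is the citation ``See Lemma~3.23 in~\cite{burke-relative-to-local-models}.'' So there is no in-paper argument to compare against, and your proposal should be judged on its own merits.

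Your approach is sound and is the natural one. The key content is exactly where you locate it: the lifting lemma asserting that a Weil-parametrised family $\phi\colon Y_i\times D_W\to E$ with $\phi(-,0)$ landing in $B$ must factor entirely through $B$, with the witness lifting to a $\sim$-witness in $B$. Your argument for this is correct. Writing it out at the generic element: with $\pi_1\colon Y_i\times D_W\to Y_i$, the map $\psi\colon (Y_i\times D_W)\times D_W\to E$, $\psi((y_0,d_0),d)=\phi(y_0,d)$, together with $d'(y_0,d_0)=d_0$, exhibits $gp_i\pi_1\sim\phi$ at stage $Y_i\times D_W$ in $\mathcal{E}/M$ (the structure map to $M$ factors through $Y_i$, so $\psi$ is indeed a morphism over $M$). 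Since $p_i\pi_1$ lands in $B$, the defining formula supplies, on a cover of $Y_i\times D_W$, an $a'$ with $ha'\approx gp_i\pi_1$, whence $ha'\approx\phi$; as $B$ is a subobject (hence a sheaf), the local factorisation through $B$ globalises. The identification $\tilde\phi(-,0)=p_i$ then follows from $g$ being monic, as you say.

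Two small points worth making explicit in a final write-up. First, reflexivity of $\sim$ (needed for $ha\approx ha$) holds by taking any $D_W$ with $d=0$ and $\phi$ constant along $D_W$. Second, in the jet-density and jet-closedness arguments you iterate the lifting lemma along a $\sim$-chain of length $n_i$ depending on the piece $X_i$ of the cover; each application may refine the cover, but since $n_i$ is a fixed natural number on each piece this terminates, and the resulting refinements assemble to a cover of $X$. With these remarks your proof is complete.
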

\begin{proof}See Lemma 3.23 in~\cite{burke-relative-to-local-models}.
\end{proof}

\begin{Theorem}
Let $\mathbb{G}$ be a groupoid in $\mathcal{E}$. Then the subobject
 \begin{gather*}(G_{\infty},s_{\infty})=\{g\in (G,s)\colon esg\approx g\}\end{gather*}
is closed under composition and hence defines a subgroupoid $\mathbb{G}_{\infty} \rightarrowtail \mathbb{G}$ called the \emph{jet part of~$\mathbb{G}$}.
\end{Theorem}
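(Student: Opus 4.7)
The plan is to reduce closure under composition to two observations about the infinitesimal neighbour relation: that $\sim$ (and hence $\approx$) is preserved by arbitrary arrows of $\mathcal{E}$, and that the groupoid composition $m\colon G\times_{M}G\rightarrow G$ is itself such an arrow (viewed appropriately over the object space~$M$).

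First I would verify the preservation property: if $h\colon B\rightarrow C$ is any arrow and $a,b\colon X\rightarrow B$ satisfy $a\sim b$, then $ha\sim hb$. This is immediate from the definition of $\sim$ given in Section~\ref{sec:infinitesimal-neighbour-relation}; given a cover $(\iota_{i}\colon X_{i}\rightarrow X)_{i\in I}$, Weil spectra $D_{W_{i}}$, arrows $\phi_{i}\colon X_{i}\times D_{W_{i}}\rightarrow B$ and $d_{i}\colon X_{i}\rightarrow D_{W_{i}}$ witnessing $a\sim b$, the same cover and the composed arrows $h\phi_{i}\colon X_{i}\times D_{W_{i}}\rightarrow C$ witness $ha\sim hb$. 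Transitively closing, if $a\approx b$ then $ha\approx hb$. When $h$ is taken to be a morphism over $M$, the same argument works in the slice $\mathcal{E}/M$.

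Next, let $g_{1},g_{2}\colon X\rightarrow G$ be a composable pair of generalised elements in $(G_{\infty},s_{\infty})$, so that $sg_{1}=tg_{2}$ and $esg_{i}\approx g_{i}$ for $i=1,2$. Using $s\circ\pi_{2}$ as the structure map on $G\times_{M}G$ (which agrees with the source of the composite), the composition $m$ is an arrow in $\mathcal{E}/M$. From $g_{1}\approx esg_{1}=etg_{2}$, applying the preservation property to the arrow $G\rightarrow G\times_{M}G\xrightarrow{m}G$ that sends a (variable) first factor $x$ to $m(x,g_{2})$ yields
\begin{gather*}
g_{1}g_{2}=m(g_{1},g_{2})\approx m(etg_{2},g_{2})=(etg_{2})\cdot g_{2}=g_{2},
\end{gather*}
where the penultimate equality is the left unit law. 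Composing this with the chain $g_{2}\approx esg_{2}=es(g_{1}g_{2})$ and using transitivity of $\approx$ gives
\begin{gather*}
es(g_{1}g_{2})\approx g_{1}g_{2},
\end{gather*}
so $g_{1}g_{2}\in G_{\infty}$. That $G_{\infty}$ contains the identities is immediate, since for $ex\colon X\rightarrow G$ one has $es(ex)=ex$ and reflexivity of $\approx$ applies.

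The main obstacle is making the first step above rigorous at the level of generalised elements: one must package the informal map ``$x\mapsto m(x,g_{2})$'' as an honest morphism $G\times_{M}X\rightarrow G$ in $\mathcal{E}/M$, obtained by pairing $\pi_{1}$ with the classifying map of the generalised element $g_{2}\colon X\rightarrow G$, and then checking that the preservation property of $\sim$ established in step one applies over the slice $\mathcal{E}/M$ at the correct stage of definition. Once this bookkeeping is in place the argument is purely formal, and the statement reduces to the references in \cite{burke-relative-to-local-models} where the jet factorisation system is developed in detail.
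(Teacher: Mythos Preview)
Your overall strategy---show that $\approx$ is preserved by morphisms, then push the relation $esg_{1}\approx g_{1}$ through right multiplication by $g_{2}$---is the natural one, and it is essentially what the references in \cite{burke-relative-to-local-models} carry out. Note that the paper itself does not give a proof here at all; it simply cites Corollary~4.4 and Proposition~4.18 of \cite{burke-relative-to-local-models}, so there is no in-paper argument to compare against beyond that pointer.

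There is, however, one genuine slip in your write-up. You use the relation in the direction $g_{1}\approx esg_{1}$ and $g_{2}\approx esg_{2}$, but the hypothesis is $esg_{i}\approx g_{i}$, and the paper explicitly warns (see the Remark in Section~\ref{sec:infinitesimal-neighbour-relation} and Section~1.4.1) that $\sim$, and hence $\approx$, is \emph{not} symmetric in general. Your argument therefore does not go through as written. The fix is easy: keep the orientation $esg_{1}\approx g_{1}$, apply right multiplication by $g_{2}$ to obtain $g_{2}=m(etg_{2},g_{2})\approx m(g_{1},g_{2})=g_{1}g_{2}$, then chain this with $es(g_{1}g_{2})=esg_{2}\approx g_{2}$ to get $es(g_{1}g_{2})\approx g_{1}g_{2}$ by transitivity. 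No symmetry is needed.

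A second point: you prove closure under identities and composition, which gives a sub\emph{category}, and indeed the paper notes elsewhere that $\mathbb{G}_{\infty}$ need not be a groupoid in general (closure under inverses is exactly where symmetry of $\approx$ on $(G,s)$ enters). The word ``subgroupoid'' in the theorem statement should be read in light of that caveat; your argument correctly stops at the subcategory conclusion, and you should not attempt to prove more without an explicit symmetry hypothesis.
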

 \begin{proof}
 See Corollary 4.4 and Proposition~4.18 in~\cite{burke-relative-to-local-models}.
 \end{proof}

\begin{Proposition}Let $L_{\infty}$ be the class of jet dense arrows and $R_{\infty}$ the class of jet closed arrows. Then the pair $(L_{\infty},R_{\infty})$ defines a $(\mathcal{E}/M)$-factorisation system.
\end{Proposition}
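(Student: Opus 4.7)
The plan is to verify the three axioms of an orthogonal factorisation system on $\mathcal{E}/M$: (i) both $L_{\infty}$ and $R_{\infty}$ contain isomorphisms and are closed under composition; (ii) every arrow $h$ admits a factorisation $h = g f$ with $f \in L_{\infty}$ and $g \in R_{\infty}$; and (iii) the orthogonality/unique fill-in property holds for every commutative square whose left edge is in $L_{\infty}$ and whose right edge is in $R_{\infty}$. Since Lemma~\ref{lem:existence-of-w-factorisations} already supplies the factorisation in (ii), the work is concentrated in (i) and especially in (iii).

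For (i), the key preliminary observation is that $\sim$, and hence its transitive closure $\approx$, is preserved by post-composition: if $x \sim y$ is witnessed by $\phi\colon X \times D_W \rightarrow B$ and $d\colon X \rightarrow D_W$, then for any $h\colon B \rightarrow B'$ the composite $h\phi$ witnesses $hx \sim hy$. From this, closure of $L_{\infty}$ under composition follows by chaining covers: given $f\colon A \rightarrow B$ and $f'\colon B \rightarrow C$ jet dense and $c\colon X \rightarrow C$, first pull $c$ back to elements $b_i$ of $B$ on a cover of $X$ with $f'(b_i) \approx c_i$, then pull each $b_i$ back to $a_{ij}$ on a refinement with $f(a_{ij}) \approx b_i|_{X_{ij}}$; post-composing by $f'$ and using transitivity of $\approx$ gives $f'f(a_{ij}) \approx c|_{X_{ij}}$. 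Closure of $R_{\infty}$ under composition uses the same bookkeeping: if $g,g'$ are jet closed and $g'g(a) \approx c$, apply jet-closedness of $g'$ to obtain local $b_i$ with $g(a_i) \approx b_i$ and $g'(b_i) = c_i$, then apply jet-closedness of $g$ to refine further. That isomorphisms lie in both classes is immediate from the fact that $\approx$ is preserved by isomorphisms (via their inverses).

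The main obstacle is (iii): given a commutative square
\begin{equation*}
\begin{tikzcd}
A \rar{a} \dar[swap]{f} & C \dar{g} \\
B \rar[swap]{b} & D
\end{tikzcd}
\end{equation*}
with $f \in L_{\infty}$ and $g \in R_{\infty}$, I must construct a unique $d\colon B \rightarrow C$ with $df = a$ and $gd = b$. Uniqueness is automatic since $g$ is a monomorphism: any two fillers agree after post-composition with $g$. For existence, I would apply jet-density of $f$ to the generalised element $\mathrm{id}_B\colon B \rightarrow B$, obtaining a cover $(B_i \rightarrow B)_{i \in I}$ and elements $a_i\colon B_i \rightarrow A$ with $f(a_i) \approx \mathrm{id}_B|_{B_i}$. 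Post-composing by $b$ and using commutativity of the square gives $g(a \circ a_i) = b \circ f(a_i) \approx b|_{B_i}$; now jet-closedness of $g$ produces, on a refinement $(B_{ij} \rightarrow B_i)$, elements $d_{ij}\colon B_{ij} \rightarrow C$ with $a \circ a_i|_{B_{ij}} \approx d_{ij}$ and $g d_{ij} = b|_{B_{ij}}$.

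To finish, I glue the local $d_{ij}$ to a global $d\colon B \rightarrow C$: they agree on overlaps because their post-composites by the monomorphism $g$ both equal the corresponding restrictions of $b$, and the topos-theoretic descent then yields a unique $d$ with $gd = b$. The remaining identity $df = a$ is verified by post-composing with $g$: one has $g \circ d \circ f = b \circ f = g \circ a$, so $df = a$ since $g$ is mono. The one subtlety I expect to need care with is checking that the cover $(B_{ij})$ is indeed a cover of $B$ (not just of each $B_i$) and that $\approx$ interacts correctly with the pullback to $B_{ij}$; both follow from the site axioms in $\mathcal{E}/M$ and the post-composition property of $\approx$ noted above.
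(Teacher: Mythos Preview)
Your argument is correct and is the standard direct verification of the orthogonal factorisation system axioms: preservation of $\approx$ under post-composition handles closure of both classes under composition, Lemma~\ref{lem:existence-of-w-factorisations} supplies the factorisations, and your gluing of the local lifts $d_{ij}$ along the monomorphism $g$ gives orthogonality. The paper itself, however, does not prove this proposition at all---its proof is simply a citation to Section~3.2 of the companion article~\cite{burke-relative-to-local-models}, where the factorisation system is constructed. So the only difference is that you have supplied the argument in place of a reference; the method you use (post-composition stability of $\approx$, then density/closedness applied to $\mathrm{id}_B$, then sheaf gluing against the mono $g$) is precisely what one expects the cited proof to contain, and nothing in your outline would require revision.
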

 \begin{proof}See Section 3.2 in \cite{burke-relative-to-local-models}.
 \end{proof}

\begin{Proposition}\label{proposition:dense-stable-along-closed-when-symmetric}
Let $g$ be jet dense and $k$ be jet closed in ${\mathcal E}/M$. Suppose that the relation $\approx$ is symmetric on the object $E$ and that the square
\begin{equation*}\begin{tikzcd}
A \dar{f} \rar[rightarrowtail]{h} & B \dar{g} \\
C \rar[rightarrowtail]{k} & E
\end{tikzcd}\end{equation*}
is a pullback. Then $f$ is also jet dense.
\end{Proposition}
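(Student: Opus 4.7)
The plan is to verify jet density of $f$ directly from the definition, by combining jet density of $g$ (to produce a candidate lift in $B$), jet closedness of $k$ (to transport that candidate into $C$ up to $\approx$), and the pullback property (to assemble everything into an element of $A$). The symmetry hypothesis on $E$ is used at the single step where the natural direction of an $\approx$-relation must be reversed, and this is the main obstacle: the output of jet density of $g$ and the input of jet closedness of $k$ are oriented oppositely with respect to $\approx$, and without symmetry on $E$ there is no mechanism to bridge them.

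Concretely, I would start with an arbitrary generalised element $c\colon X\to C$, push forward to $kc\colon X\to E$, and apply jet density of $g$ to obtain a cover $(\iota_i\colon X_i\to X)_{i\in I}$ together with arrows $b_i\colon X_i\to B$ with $g(b_i)\approx k(c_i)$. Symmetry of $\approx$ on $E$ then rewrites this as $k(c_i)\approx g(b_i)$, which is exactly the shape of hypothesis required to feed the pair $(c_i, g(b_i))$ into the jet closedness of $k$. This produces a refinement $(X_{ij}\to X_i)_j$ and arrows $c'_{ij}\colon X_{ij}\to C$ satisfying $c_i|_{X_{ij}}\approx c'_{ij}$ and $k(c'_{ij}) = g(b_i)|_{X_{ij}}$.

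The equality $k(c'_{ij}) = g(b_i|_{X_{ij}})$ together with the pullback universal property then yields a unique $a_{ij}\colon X_{ij}\to A$ with $f(a_{ij}) = c'_{ij}$ and $h(a_{ij}) = b_i|_{X_{ij}}$. To conclude $f(a_{ij}) = c'_{ij}\approx c|_{X_{ij}}$ on the composite cover $(X_{ij}\to X)_{i,j}$, rather than merely $c|_{X_{ij}}\approx c'_{ij}$, I would push the available relation through $k$, apply symmetry on $E$ a second time, and pull back along the jet closed monomorphism $k$, passing to a further refinement of the cover if necessary; in effect this shows that symmetry of $\approx$ on $E$ is inherited (locally, up to covering) by the jet closed subobject $C$. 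On the resulting cover the relation $f(a_{ij})\approx c|_{X_{ij}}$ is exactly the witness required for jet density of $f$, and the rest of the argument is bookkeeping of covers and one invocation of the pullback universal property.
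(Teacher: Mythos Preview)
Your argument is correct. The paper itself does not give a proof of this proposition but simply refers to Proposition~3.27 of \cite{burke-relative-to-local-models}; your direct verification from the definitions is the natural one, and in particular your observation that symmetry of $\approx$ on $E$ transfers along the jet closed monomorphism $k$ to $C$ (since $k$ preserves $\approx$, symmetry on $E$ flips the relation, jet closedness produces a lift, and monicity of $k$ identifies that lift with the original element) is exactly what is needed to orient the final relation $f(a_{ij})\approx c|_{X_{ij}}$ correctly.

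One small remark: the second use of symmetry and the further refinement you describe are genuinely necessary, but note that because $\approx$ is already defined locally with respect to covers, the statement ``$c'_{ij}\approx c|_{X_{ij}}$ holds on a refinement'' is the same as ``$c'_{ij}\approx c|_{X_{ij}}$''; so no extra bookkeeping is required beyond composing the covers already produced.
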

\begin{proof}
 See Proposition 3.27 in \cite{burke-relative-to-local-models}.
\end{proof}

\subsection{Neighbour relation is symmetric for Lie groupoids}\label{sec:symmetry-of-neigbour-relation}

One of the assumptions that is required to prove Lie's second theorem in \cite{burke-relative-to-local-models} involves the symmetry of the neighbour relation $\sim$ def\/ined in Section~\ref{sec:infinitesimal-neighbour-relation}. More precisely, if $\mathbb{G}$ is a groupoid in~$\mathcal{E}$ with arrow space~$G$ and source map~$s$ then we need to assume that~$\sim$ is symmetric on the object~$(G,s)$ in $\mathcal{E}/M$. In this section we justify this assumption by proving that if~$\mathbb{G}$ is a~Lie groupoid then the relation $\sim$ is symmetric on the object $(G,s)$ in $\mathcal{E}/M$.

So suppose that $a,b\in_X(G,s)$ in $\mathcal{E}/M$ and $a\sim b$. By def\/inition we have a cover $(X_i \rightarrow X)_i$ such that for all $i$ there exist $W_i\in {\rm Spec}({\rm Weil})$, $\phi_i\in_{X_i}(G,s)^{D_{W_i}}$ and $d_i\in_{X_i}D_{W_i}$ making\vspace{-3mm}
\begin{equation*}
\begin{tikzcd}
 X_i \dar[swap]{(1_{X_i},d_i)} \drar{b_i} {}\\
 X_i\times D_{W_i} \rar{\phi_i} & (G,s)\\
 X_i \uar{(1_{X_i},0)} \urar[swap]{a_i} & {}
\end{tikzcd}
\end{equation*}
commute where $a_i$ and $b_i$ are the restrictions of $a$ and $b$ to $X_i$. We need to show that $b\sim a$.

\begin{Definition}\label{definition:s-trivialisation}
Let $s\colon G\rightarrow M$ be an arrow in $Man$ and $x\in G$. Then a pair of open embeddings $(\alpha_{x},\beta_{x})$ is an \emph{$s$-trivialisation centred at $x$} if\/f\vspace{-3mm}
\begin{equation*}\begin{tikzcd}
C^{k+n} \dar[twoheadrightarrow]{\pi} \rar[rightarrowtail]{\alpha_{x}} & G \dar[twoheadrightarrow]{s} \\
C^{k} \rar[rightarrowtail]{\beta_{x}} & M
\end{tikzcd}\end{equation*}
commutes and $\alpha_{x}(0)=x$.
\end{Definition}

\begin{Lemma} There exists a cover of $\iota_x\colon (X_{i,x}\rightarrow X_i)$ such that $\iota_x a_i$ factors through an $s$-trivialisation $C^{n+k} \rightarrowtail G$ around $a_i(x)$.
\end{Lemma}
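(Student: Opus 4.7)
The plan is to invoke the classical submersion normal form theorem to cover $G$ by $s$-trivialisations and then pull this cover back along $a_i$, using pullback-stability of Penon open subobjects to transport everything into the Dubuc topos.

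Since $\mathbb{G}$ is a Lie groupoid the source $s\colon G\to M$ is a smooth submersion, so the classical local normal form furnishes, for each point $\gamma\in G$, an $s$-trivialisation $(\alpha_{\gamma},\beta_{\gamma})$ centred at $\gamma$ in the sense of Definition~\ref{definition:s-trivialisation}, whose image $V_{\gamma}\subset G$ is a classical open neighbourhood of $\gamma$. The family $(V_{\gamma})_{\gamma\in G}$ is an open cover of $G$ in ${\rm Man}$, and hence, by the defining axioms of a well-adapted model together with Proposition~\ref{Penon-iff-Dubuc-on-representables}, corresponds to a jointly epimorphic family of Penon open subobjects of $G$ in $\mathcal{E}$.

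Next, for each $\gamma\in G$ set $X_{i,\gamma}:=a_i^{-1}(V_{\gamma})\rightarrowtail X_i$. The previous corollary giving pullback stability of Penon opens shows that each $X_{i,\gamma}$ is a Penon open subobject of $X_i$, and by construction the composite $a_i\circ\iota_{\gamma}$ factors through $V_{\gamma}$, and hence through the open embedding $\alpha_{\gamma}\colon C^{n+k}\rightarrowtail G$. Since $X_i$ is a representable object of the Dubuc topos, Corollary~\ref{cor:penon-implies-dubuc} lets us regard each such $X_{i,\gamma}$ as a Dubuc open subobject. Reindexing by writing $X_{i,x}:=X_{i,a_i(x)}$ and $\iota_x$ for the inclusion, the composite $\iota_x\circ a_i$ factors through the $s$-trivialisation $\alpha_{a_i(x)}\colon C^{n+k}\rightarrowtail G$ centred at $a_i(x)$, as required.

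It remains to check that $(\iota_x\colon X_{i,x}\to X_i)$ is a genuine covering family in the Dubuc coverage, and I expect this bookkeeping to be the main (though mild) obstacle. The key point is that the Dubuc site is built from germ-determined representables (Definition~\ref{def:germ-determined-ideal}), so a family of Dubuc open subobjects of $X_i=[n,I]$ is covering precisely when the corresponding classical open subsets jointly surject onto $Z(I)$. Every classical point $x\in Z(I)$ is tautologically contained in $X_{i,x}$ because $a_i(x)\in V_{a_i(x)}$, so joint surjectivity is immediate; the family $(\iota_x)$ therefore constitutes the desired cover.
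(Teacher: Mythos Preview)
Your proof is correct and follows essentially the same approach as the paper: use the submersion normal form to produce $s$-trivialisations, pull their images back along $a_i$, and observe that the resulting family covers $X_i$; the paper simply indexes directly by global points $x\in B_i$ rather than first covering all of $G$ and then reindexing, and it omits your explicit Penon/Dubuc-open bookkeeping. One small caveat worth flagging: nothing in the setup guarantees that $X_i$ is representable (you assert this in order to invoke Corollary~\ref{cor:penon-implies-dubuc} and to test joint surjectivity on $Z(I)$), and the paper's proof tacitly relies on the same assumption when it indexes by global points of $B_i$; this is harmless since one may first refine the given cover $(X_i\to X)$ to one by representables and restrict the data $\phi_i$, $d_i$ accordingly, but strictly speaking it should be said.
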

\begin{proof} Let $X_i = (B_i,\xi_i)$. Since $s$ is a submersion we can choose for each $x\in B_i$ an $s$-trivialisation $\nu_x\colon C^{n+k}\rightarrowtail G$ centred at $a_i(x)$. Write $U_x$ for the image of $\nu_x$. Then the family $(\iota_x\colon a_i^{-1}(U_x) \rightarrow B_i)_{x\in B_i}$ covers $B_i$ in $\mathcal{E}$ and for each $x\in B_i$ the arrow $\iota_x a_i$ factors through~$U_x$. This means that $(\iota_{x}\colon (a_i^{-1}(U_x),\xi_i) \rightarrow X_i))_{x\in B_i}$ is a covering family in $\mathcal{E}/M$ such that $\iota_x a_i$ factors through~$U_x$. So we choose $X_{i,x}=((a_i^{-1}(U_x),\xi_i)$.
\end{proof}

Now using the cover $(X_{i,x}\rightarrow X)_{i,x}$ we show that $b\sim a$.

\begin{Lemma}\label{lem:approx-is-symmetric} If $d_{i,x}$, $a_{i,x}$, $b_{i,x}$ and $\phi_{i,x}$ are the restrictions of $d$, $a$, $b$ and $\phi$ respectively to $X_{i,x}$ then the arrows $\psi_{i,x}\colon X_{i,x}\times D_{W_i}\rightarrow (U_{x},s)$ defined by
\begin{gather*}\psi_{i,x}(u,d)=a_{i,x}(u)+_{s}\phi_{i,x}(u,d_{i,x}(u))-_{s}\phi(u,d)\end{gather*}
exhibit $b\sim a$ where $+_{s}$ and $-_{s}$ denote the fibrewise addition and subtraction. $($I.e., addition in the last $n$ coordinates of the $s$-trivialisation.$)$ Hence the infinitesimal neighbourhood relation is symmetric for all Lie groupoids.
\end{Lemma}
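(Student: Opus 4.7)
The plan is to verify directly that the maps $\psi_{i,x}$, together with the parameter maps $d_{i,x}$ already in hand, witness $b\sim a$ over the refined cover $(X_{i,x}\to X)_{i,x}$ according to the definition of $\sim$ in Section~\ref{sec:infinitesimal-neighbour-relation}.

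First I would establish that $\psi_{i,x}$ is a well-defined arrow into $(U_x,s)$. Because $\phi_i$ is an arrow over $M$, the composite $s\phi_i(u,-)$ is constant in its second variable, equal to the structure map $\xi_i(u)$ of $X_i$; hence all three summands share the same $s$-value and fibrewise arithmetic makes sense \emph{once} each summand lies inside a common trivialising chart. The term $a_{i,x}(u)$ lies in $U_x$ by the very definition $X_{i,x}=a_i^{-1}(U_x)$, and the terms $\phi_{i,x}(u,d_{i,x}(u))$ and $\phi_{i,x}(u,d)$ lie in $U_x$ because $\phi_i(u,-)$ is a map from a Weil spectrum that sends $0$ to the point $a_i(u)\in U_x$, and open subobjects of $G$ are stable under such infinitesimal perturbations. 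Pulling back through the trivialisation $\alpha_x\colon C^{k+n}\rightarrowtail G$, the expression $\alpha +_s \beta -_s \gamma$ is computed coordinate-wise as $y_1+y_2-y_3$ in the last $n$ coordinates; since $y_2-y_3$ is infinitesimal and $y_1\in C^n$, the result remains in $C^n$ by the same infinitesimal stability.

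Second, I would compute the two required equalities. Using $\phi_i(u,0)=a_i(u)$ and $\phi_i(u,d_i(u))=b_i(u)$ (both restricted to $X_{i,x}$) yields
\begin{gather*}
\psi_{i,x}(u,0)=a_{i,x}(u)+_s\phi_{i,x}(u,d_{i,x}(u))-_s a_{i,x}(u)=b_{i,x}(u),\\
\psi_{i,x}(u,d_{i,x}(u))=a_{i,x}(u)+_s\phi_{i,x}(u,d_{i,x}(u))-_s\phi_{i,x}(u,d_{i,x}(u))=a_{i,x}(u).
\end{gather*}
These are exactly the two commutativity conditions of the definition of $\sim$ with $a$ and $b$ interchanged, over the cover $(X_{i,x}\to X)_{i,x}$, with Weil spectrum $D_{W_i}$ and parameter $d_{i,x}$. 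Hence $b\sim a$, and since $a\sim b$ was arbitrary, the relation $\sim$ is symmetric on $(G,s)$ for every Lie groupoid $\mathbb{G}$.

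The step I expect to pose the main obstacle is the well-definedness verification, namely certifying that $\phi_{i,x}(u,d)$ lies in $U_x$ for every $d\in D_{W_i}$ and that the coordinate-wise sum $y_1+y_2-y_3$ remains inside $C^n$. Both rest on the infinitesimal stability of open subobjects in the well-adapted model, but they require some care because naive fibrewise arithmetic in a chart $C^n$ need not land back in $C^n$. The two algebraic identities themselves are transparent once that arithmetic has been legitimised.
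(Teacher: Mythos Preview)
Your proposal is correct and follows essentially the same route as the paper: check that $\psi_{i,x}$ lands in $(U_x,s)$ using the fact that open subobjects are stable under infinitesimal perturbation (the paper phrases this as ``the inclusion of $U_x$ into $G$ is jet closed'', Corollary~\ref{cor:open-inclusion-is-jet-closed}), verify it is an arrow over $M$ because the summands share a common source, and then compute the two equalities exactly as you do. The only cosmetic difference is that the paper argues the factorisation through $U_x$ \emph{after} forming $\psi_{i,x}$ --- observing $\psi_{i,x}(u,0)=b_{i,x}(u)\in U_x$ and invoking jet-closedness once --- whereas you first place each summand in $U_x$ and then control the fibrewise sum in $C^n$; both amount to the same use of the same fact.
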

\begin{proof}
By construction the diagrams\vspace{-3mm}
\begin{equation*}
\begin{tikzcd}
 X_{i,x}\dar[swap]{(1_{X_{i,x}},d_{i,x})} \drar{b_{i,x}} {}\\
 X_{i,x} \times D_{W_i} \rar{\phi_{i,x}} & (U_{x},s)\\
 X_{i,x}\uar{(1_{X_{i,x}},0)} \urar[swap]{a_{i,x}} & {}
\end{tikzcd}
\end{equation*}

\noindent
commute for all $x\in B_i$. First we check that $\psi_{i,x}$ factors through~$U_x$. This follows from the equality $\psi(u,0)=b_{i,x}(u)$ and the fact that the inclusion of~$U_x$ into~$G$ is jet closed. Second we check that $\psi_{i,x}$ def\/ines an arrow in the slice category. But this follows from the fact that the three terms $a_{i,x}(u)$, $\phi_{i,x}(u,d_{i,x}(u))$ and $\phi(u,d)$ have the same source and the addition def\/i\-ning~$\psi_{i,x}$ is carried out in the last~$n$ coordinates of the $s$-trivialisation. Finally since
\begin{gather*}\psi(u,0)=a_{i,x}(u)+\phi_{i,x}(u,d_{i,x}(u))-\phi_{i,x}(u,0)=b_{i,x}(u)\end{gather*}
and
\begin{gather*}\psi(u,d_i(u))=a_{i,x}(u)+\phi_{i,x}(u,d_{i,x}(u))-\phi_{i,x}(u,d_{i,x}(u))=a_{i,x}(u)\end{gather*}
we conclude that $b\sim a$.
\end{proof}

\subsection{A trivialisation cover of the identity elements}\label{sec:trivialisation-of-identities}

In this section we construct a cover $(\phi_{em}\colon C^{n+k} \rightarrow G)_{m\in M}$ of $e(M)$ in $G$ with the property that each $\phi_{em}$ has a lift $\psi_{em}$ making\vspace{-3mm}
\begin{equation*}
 \begin{tikzcd}
 {} & \mathbb{G}^{\mathbb{I}} \dar{\mathbb{G}^{l}}\\
 C^{n+k} \urar[dashed]{\psi_{em}} \rar{\phi_{em}} & G
 \end{tikzcd}
\end{equation*}
commute and furthermore when we restrict $\psi_{em}$ to $e(M)$ the f\/illers we obtain are the constant f\/illers.
First we choose an $s$-trivialisation at $em$ such that the identity inclusion induces a section of the projection onto the f\/irst $k$ coordinates in the trivialisation.

\begin{Lemma}\label{lemma:trivialisation-with-identities-included}
If $m\in M$ then there is an $s$-trivialisation $(\alpha_{em},\beta_{em})$ at $em$ such that $e \beta_{em}$ factors through $\alpha_{em}$.
\end{Lemma}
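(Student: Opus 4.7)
The plan is to start from an arbitrary $s$-trivialisation at $em$ and then straighten it by a fibre-preserving diffeomorphism of $C^{k+n}$ so that the unit section $e$ becomes the standard zero section $x\mapsto (x,0)$.

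Since $s$ is a submersion, the standard local normal form supplies an initial $s$-trivialisation $(\alpha_{0},\beta_{0})$ centred at $em$, so that $\alpha_{0}(0)=em$ and $s\alpha_{0}=\beta_{0}\pi$. Since $e$ is a continuous section of $s$ (we have $se=1_{M}$) and $e(m)=em\in \alpha_{0}(C^{k+n})$, we can shrink $\beta_{0}$ (and then reparametrise to a standard cube) to arrange $e(\beta_{0}(C^{k}))\subset \alpha_{0}(C^{k+n})$. Define $\sigma := \alpha_{0}^{-1}\circ e\circ\beta_{0}\colon C^{k}\rightarrow C^{k+n}$. Composing $s\alpha_{0}=\beta_{0}\pi$ with $se=1_{M}$ yields $\pi\sigma = \beta_{0}^{-1}s\alpha_{0}\sigma = \beta_{0}^{-1}se\beta_{0}=1_{C^{k}}$, so $\sigma(x)=(x,\tau(x))$ for some smooth $\tau\colon C^{k}\rightarrow C^{n}$ with $\tau(0)=0$.

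Now define the fibre-preserving map $\Phi(x,y):=(x,y+\tau(x))$. It is a local diffeomorphism fixing the origin and satisfies $\pi\Phi=\pi$. After shrinking the domain $C^{k}\times C^{n}$ so that $\Phi$ has image contained in $C^{k+n}$ (and reparametrising back to the standard cube), we may view $\Phi$ as an open embedding $C^{k+n}\rightarrow C^{k+n}$. Set $\alpha_{em}:=\alpha_{0}\circ\Phi$ and $\beta_{em}:=\beta_{0}$. Then $\alpha_{em}(0)=\alpha_{0}(0)=em$ and $s\alpha_{em}=s\alpha_{0}\Phi=\beta_{0}\pi\Phi=\beta_{em}\pi$, so $(\alpha_{em},\beta_{em})$ is indeed an $s$-trivialisation centred at $em$.

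Finally, for $x\in C^{k}$ we compute
\begin{gather*}
\alpha_{em}(x,0)=\alpha_{0}(\Phi(x,0))=\alpha_{0}(x,\tau(x))=\alpha_{0}(\sigma(x))=e\beta_{0}(x)=e\beta_{em}(x),
\end{gather*}
exhibiting $e\beta_{em}$ as the composite of $\alpha_{em}$ with the smooth section $x\mapsto (x,0)$, hence factoring through $\alpha_{em}$. The only real technical obstacle is the simultaneous shrinking of $C^{k}$ and $C^{n}$ required to keep the translated domain diffeomorphic to the standard cube $C^{k+n}$; this is routine (using $\tau(0)=0$ and continuity of $\tau$) and does not affect any of the identities above.
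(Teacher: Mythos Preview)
Your argument is correct and in fact yields a slightly stronger conclusion than the paper's: you arrange for $e\beta_{em}$ to factor through $\alpha_{em}$ via the \emph{zero} section $x\mapsto(x,0)$, whereas the paper is content to produce an arbitrary smooth section $\rho$. The paper's route is more diagrammatic: it forms the pullback $P$ of $e\beta$ along $\alpha$, observes that $P$ is an open neighbourhood of $0$ in $C^{k}$, reparametrises this to a standard cube via some $\iota\colon C^{k}\rightarrowtail P$, and then pulls back $\pi$ along $\nu\iota$ to obtain $\mu$ together with an induced section $\rho$. No fibrewise translation is performed; the final $s$-trivialisation is $(\alpha\mu,\beta\nu\iota)$ with $e\beta\nu\iota=\alpha\mu\rho$. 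Your translation $\Phi(x,y)=(x,y+\tau(x))$ is precisely the extra step that normalises $\rho$ to the zero section. One small bookkeeping point: if your second shrinking touches the $C^{k}$ factor you must also restrict $\beta_{0}$ accordingly (so strictly $\beta_{em}=\beta_{0}\circ\psi_{k}$ for the reparametrisation $\psi_{k}$, rather than $\beta_{em}=\beta_{0}$), otherwise the displayed identity $\alpha_{em}(x,0)=e\beta_{em}(x)$ does not type-check; your closing remark shows you are aware of this and the fix is indeed routine. In the paper's subsequent use of the lemma the section $\rho$ appears explicitly in the straight-line homotopy $a\mapsto(\vec{x},a\vec{y}+(1-a)\rho(\vec{x}))$, so neither form of the factorisation has any real advantage downstream.
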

\begin{proof}Let $(\alpha,\beta)$ be any $s$-trivialisation at $em$. Then if $\nu$ and $\xi$ are def\/ined in the pullback\vspace{-3mm}
\begin{equation*}\begin{tikzcd}
P \dar[rightarrowtail]{\nu} \rar[rightarrowtail]{\xi} & C^{k+n}\dar[rightarrowtail]{\alpha} \\
C^{k} \rar[rightarrowtail]{e\beta} & G
\end{tikzcd}\end{equation*}
then $\beta\pi\xi=s\alpha\xi=se\beta\nu=\beta\nu$ and so $\pi\xi=\nu$ because $\beta$ is a monomorphism. Now $P$ is an open set of $C^{k}$ and $0\in P$ because $e\beta(0)=\alpha(0)$. Since the derivative of $\nu$ has full rank at $0$ we can f\/ind an open embedding $\iota\colon C^{k}\rightarrowtail P$ such that $\nu\iota(0)=0$. Now let $\mu$ be def\/ined by the pullback\vspace{-3mm}
\begin{equation*}\begin{tikzcd}
C^{k+n} \dar[twoheadrightarrow]{\pi} \rar[rightarrowtail]{\mu} & C^{k+n}\dar[twoheadrightarrow]{\pi} \\
C^{k}\rar[rightarrowtail]{\nu\iota} \uar[bend left,dashed,rightarrowtail]{\rho} & C^{k}
\end{tikzcd}\end{equation*}
and $\rho$ be induced by the pair $(1_{P},\xi\iota)$. Then $e\beta\nu\iota=\alpha\xi\iota=\alpha\mu\rho$ and the $s$-trivialisation that we require is $(\alpha_{em},\beta_{em})=(\alpha\mu,\beta\nu\iota)$.
\end{proof}

This means that for each $\vec{x}\in C^k$ the arrow $\psi(\rho(\vec{x}),\vec{y})$ is an identity arrow. The $\phi_{em}$ that we require will be the $\alpha_{em}$ obtained in Lemma~\ref{lemma:trivialisation-with-identities-included}. Now we can construct a lift $\psi_{em}\colon C^{k+n} \rightarrow \mathbb{G}^{\mathbb{I}}$ for $\phi_{em}$ as follows. For each $(\vec{x},\vec{y})\in C^{k+n}$ we have a source constant path
\begin{gather}\label{source-constant-path}
 a \mapsto (\vec{x}, a\vec{y}+(1-a)\rho(\vec{x})),
\end{gather}
which starts at an identity. Since~\eqref{source-constant-path} is smooth in $\vec{x}$ and $\vec{y}$ it induces an arrow $\psi_{em}\colon C^{k+n} \rightarrow \mathbb{G}^{\mathbb{I}}$. Moreover by construction the restriction of $\psi_{em}$ to $e(M)$ are the constant paths at identity arrows.

\subsection{A cover of the jet part}\label{sec:cover-of-jet-part}

In Section~\ref{sec:trivialisation-of-identities} we constructed a cover $(\phi_{em}\colon C^{n+k}\rightarrow G)_{m\in M}$ of $e(M)$ in $G$ satisfying certain properties on restriction to~$e(M)$. In this section we show that the $\phi_{em}$ also induce a cover of the object $(G_{\infty},s_{\infty})$ in $\mathcal{E}/M$.

\begin{Lemma}\label{lemma:Lifting Cover to Ginfty}
There is an inclusion $j\colon (G_{\infty},s_{\infty})\rightarrowtail \bigcup_{m}(U_{m},s\phi_{em})$ such that $\bigcup_m \phi_{em} \circ j=\iota_{G}^{\infty}$.
\end{Lemma}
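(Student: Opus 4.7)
The plan is to exhibit $j$ by showing that every generalised element of $(G_{\infty},s_{\infty})$ factors, after refinement to a cover, through one of the open subobjects $(U_{m},s\phi_{em}) \hookrightarrow (G,s)$ determined by the $s$-trivialisations of Section~\ref{sec:trivialisation-of-identities}. Writing $U_{m}=\phi_{em}(C^{n+k})\subset G$ for the image, which is an open subset containing $em$ by construction, the family $(U_{m})_{m\in M}$ is an open cover of $e(M)$. By Corollary~\ref{cor:open-inclusion-is-jet-closed} each inclusion $U_{m}\hookrightarrow G$ is jet closed in $\mathcal{E}$, and the same is true of $(U_{m},s\phi_{em})\hookrightarrow (G,s)$ in the slice $\mathcal{E}/M$, because covers and the relation $\approx$ in $\mathcal{E}/M$ are defined directly through their counterparts in $\mathcal{E}$.

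First I would take a generalised element $g\colon X\to (G,s)$ of $(G_{\infty},s_{\infty})$, so that by definition $esg\approx g$. Pulling the open cover $(U_{m})_{m}$ of $e(M)$ back along $esg$ produces a cover $(X_{m}\to X)_{m}$ in $\mathcal{E}/M$ on which $esg$ restricts to an arrow $\widetilde{esg}_{m}\colon X_{m}\to U_{m}$. Since $esg|_{X_{m}}\approx g|_{X_{m}}$ in $(G,s)$, I would then apply jet closedness of $U_{m}\hookrightarrow G$ with $a=\widetilde{esg}_{m}$ and $b=g|_{X_{m}}$, obtaining a further refinement $(X_{m,i}\to X_{m})_{i}$ together with arrows $c_{m,i}\colon X_{m,i}\to U_{m}$ whose composite with the inclusion equals $g|_{X_{m,i}}$. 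On the composite cover, therefore, $g$ factors through $\bigcup_{m}U_{m}$, which is the desired local lifting.

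Assembling these local factorisations via descent along the cover yields a monomorphism $j\colon (G_{\infty},s_{\infty})\rightarrowtail \bigcup_{m}(U_{m},s\phi_{em})$; the equality $\bigcup_{m}\phi_{em}\circ j=\iota_{G}^{\infty}$ then holds automatically because both sides agree on every generalised element of $(G_{\infty},s_{\infty})$ by construction. The main obstacle I expect is the compatibility of jet closedness between $\mathcal{E}$ and $\mathcal{E}/M$: Corollary~\ref{cor:open-inclusion-is-jet-closed} is stated in $\mathcal{E}$, but to apply the lifting property for an arrow $X\to (G,s)$ in the slice I need the analogous property there. This should follow from the definitions in Section~\ref{sec:infinitesimal-neighbour-relation}, since covers in $\mathcal{E}/M$ are exactly those whose underlying family in $\mathcal{E}$ is a cover and the Weil spectra used to witness $\sim$ are constant over $M$, but this compatibility is the one point deserving explicit verification.
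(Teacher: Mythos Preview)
Your argument is correct, but it is considerably more elaborate than the paper's. The paper observes that $e_{\infty}\colon (M,1)\rightarrowtail (G_{\infty},s_{\infty})$ is jet dense (indeed, $G_{\infty}$ is by construction the mediating object of the jet factorisation of $e\colon (M,1)\to (G,s)$, cf.\ Lemma~\ref{lem:existence-of-w-factorisations}) and that the inclusion $\bigcup_{m}(U_{m},s\phi_{em})\rightarrowtail (G,s)$ is jet closed. The commuting square with $e_{\infty}$ on the left and this inclusion on the right then has a unique diagonal filler $j$ by the orthogonality of the $(L_{\infty},R_{\infty})$ factorisation system, and that filler is automatically monic. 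Your approach amounts to re-deriving this orthogonality by hand: you take a generalised element of $G_{\infty}$, refine to a cover on which $esg$ lands in a single $U_{m}$, then invoke jet closedness of the individual $U_{m}$ to lift $g$ itself, and finally glue. This works, but the factorisation-system formulation packages all of it into one lifting diagram. The point you flag about compatibility of jet closedness between $\mathcal{E}$ and $\mathcal{E}/M$ is genuine and is equally present (and equally left implicit) in the paper's argument; your instinct that it follows from the definitions in Section~\ref{sec:infinitesimal-neighbour-relation} is correct, since the Weil spectra witnessing $\sim$ are pulled back from $\mathcal{E}$ and covers in the slice are detected in $\mathcal{E}$.
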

\begin{proof}
By hypothesis we have an inclusion $(M,1_M)\rightarrowtail \bigcup_{m}(U_{m},s\phi_{em})$ such that $\iota\circ m=e$. Since the inclusion $\iota$ is jet closed in $\mathcal{E}/M$ the square
\begin{equation*}\begin{tikzcd}
(M,1) \dar[rightarrowtail]{e_{\infty}} \rar[rightarrowtail]{m} & (\bigcup_{m}U_{m},s\phi_{em}) \dar[rightarrowtail]{\bigcup_m \phi_{em}} \\
(G_{\infty},s_{\infty}) \urar[rightarrowtail,dashed]{\exists !\, j} \rar[rightarrowtail]{\iota_{G}^{\infty}} & (G,s)
\end{tikzcd}\end{equation*}
has a unique (monic) f\/iller.
\end{proof}

\begin{Corollary}\label{corollary:Cover of Ginfty}
Let the objects $(V_{m},s_{\infty}\phi_{em})$ of $\mathcal{E}/M$ be defined by the pullbacks
\begin{equation*}\begin{tikzcd}
(V_{m},s_{\infty}\phi_{em}) \dar[rightarrowtail]{\chi_{m}} \rar[rightarrowtail]{u_m} & (U_{m},s\phi_{em}) \dar[rightarrowtail]{\phi_{em}} \\
(G_{\infty},s_{\infty}) \rar[rightarrowtail]{\iota_G^{\infty}} & (G,s)
\end{tikzcd}\end{equation*}
then because colimits are stable under pullback the bottom right square in
\begin{equation*}\begin{tikzcd}
(G_{\infty},s_{\infty})\arrow[bend left,rightarrowtail]{drr}{j} \arrow[bend right]{ddr}[swap]{1_{G_{\infty}}} \arrow[dashed]{dr}{\eta}& {} & {}\\
{} & (\bigcup_{m}V_{m},s_{\infty}\phi_{em}) \rar[rightarrowtail]{\bigcup_{m}u_m} \dar[rightarrowtail]{\bigcup_m \chi_{m}} & (\bigcup_{m}U_{m},s\phi_{em}) \dar[rightarrowtail]{\bigcup_m \phi_{em}}\\
{} & (G_{\infty},s_{\infty}) \rar[rightarrowtail]{\iota_G^{\infty}} & (G,s)
\end{tikzcd}\end{equation*}
is a pullback. But then the arrow $\eta$ induced by the pair $(1_{G_{\infty}},j)$ is an isomorphism and hence $\bigcup_{m\in M}\chi_{m}$ is a cover of $(G_{\infty},s_{\infty})$.
\end{Corollary}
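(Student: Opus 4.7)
The plan is to verify the three intermediate claims implicit in the statement: that the outer bottom-right square (with corners $\bigcup_m V_m$, $\bigcup_m U_m$, $(G_\infty,s_\infty)$ and $(G,s)$) is a pullback; that the pair $(1_{G_\infty},j)$ induces a well-defined $\eta$; and finally that this $\eta$ is an isomorphism, so that $\bigcup_m \chi_m$ is a cover. My strategy is purely categorical: assemble the individual pullbacks defining the $V_m$ into a single pullback of the union $\bigcup_m U_m$, and then recognise $\eta$ as a section of the monomorphism $\bigcup_m \chi_m$.

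First I would note that each $\phi_{em}$ is an open embedding (Definition~\ref{definition:s-trivialisation}), hence a monomorphism, and that $\iota_G^\infty$ is a monomorphism because $(G_\infty,s_\infty)$ is by construction a subobject of $(G,s)$. The arrow $\bigcup_m \phi_{em}$ then represents the union of the $U_m$ as subobjects of $(G,s)$, which is a colimit in the slice topos $\mathcal{E}/M$. Since $\mathcal{E}/M$ is itself a Grothendieck topos, colimits there are universal, and pulling back this union along $\iota_G^\infty$ commutes with the union to yield $\bigcup_m V_m$, establishing that the outer square is the asserted pullback.

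I would then obtain $\eta$ from the universal property of this pullback applied to the cone $(1_{G_\infty},j)$; the required commutation $(\bigcup_m \phi_{em})\circ j = \iota_G^\infty \circ 1_{G_\infty}$ is exactly the content of Lemma~\ref{lemma:Lifting Cover to Ginfty}, so a unique $\eta$ with $(\bigcup_m \chi_m)\eta = 1_{G_\infty}$ and $(\bigcup_m u_m)\eta = j$ exists. For the final step, each $\chi_m$ is a pullback of the monomorphism $\phi_{em}$ and is therefore itself a monomorphism, whence $\bigcup_m \chi_m$ is a monomorphism of subobjects of $(G_\infty,s_\infty)$; any monomorphism admitting a section is automatically an isomorphism, so $\eta$ is the two-sided inverse of $\bigcup_m \chi_m$ and the family $(\chi_m)_{m\in M}$ therefore covers $(G_\infty,s_\infty)$.

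I expect no serious mathematical obstacle here: the proof is essentially bookkeeping around universality of colimits in a Grothendieck topos together with Lemma~\ref{lemma:Lifting Cover to Ginfty}. The only point requiring mild care is parsing the three-level diagram correctly, so that ``the bottom-right square'' is read as the outer $2\times 2$ square that actually participates in the pullback argument, rather than any of the triangles formed by the dashed arrow $\eta$.
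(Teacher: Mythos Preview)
Your proposal is correct and follows essentially the same route as the paper, which in fact embeds the entire argument in the corollary statement itself (there is no separate proof environment): stability of colimits under pullback gives the outer square, Lemma~\ref{lemma:Lifting Cover to Ginfty} supplies the cone $(1_{G_\infty},j)$, and the induced $\eta$ witnesses that the monic $\bigcup_m\chi_m$ is split and hence an isomorphism. Your write-up simply makes explicit the mono-with-section step that the paper leaves implicit.
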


\subsection{Jet part of a Lie groupoid is internal path connected}\label{sec:jet-part-path-connected}

Now we combine Section~\ref{sec:trivialisation-of-identities} and Section~\ref{sec:cover-of-jet-part} to show that the jet part of a Lie groupoid is $\mathcal{E}$-path connected. It will suf\/f\/ice to show that when we restrict the f\/illers $\psi_{em}\colon C^{n+k}\rightarrow \mathbb{G}^{\mathbb{I}}$ def\/ined in Section~\ref{sec:trivialisation-of-identities} along $u_m$ we get an arrow that factors through~$(\mathbb{G}_{\infty}^{\mathbb{I}},s_{\infty})$. Then $\psi_{em}u_m$ is a f\/iller for~$\chi_m$.

So let $V_{m}$ and $W_{m}$ be def\/ined by the iterated pullback:
\begin{equation*}\begin{tikzcd}
(W_{m},\iota) \dar \rar[rightarrowtail]{v_m} & (V_{m},s_{\infty}\chi_{em}) \dar{\chi_{em}} \rar[rightarrowtail]{u_m} & (U_{m},s\phi_{em})\dar{\phi_{em}} \\
(M,1) \rar[rightarrowtail]{e_{\infty}} & (G_{\infty},s_{\infty}) \rar[rightarrowtail]{\iota^{\infty}_{G}} & (G,s)
\end{tikzcd}\end{equation*}
and note that the $\chi_{em}$ are Penon open because the $\phi_{em}$ are. Then by Proposition~\ref{proposition:dense-stable-along-closed-when-symmetric} and Lemma~\ref{lem:approx-is-symmetric} we deduce that $v_m$ is jet dense. Since we have chosen $\psi_{em}$ such that $\psi_{em} v_m u_m$ are the constant functions $c_m$ the square
\begin{equation*}\begin{tikzcd}
(W_m,\iota) \rar[rightarrowtail]{c_m} \dar[rightarrowtail]{v_m} & \big(\mathbb{G}_{\infty}^{\mathbb{I}},s_{\infty}\big) \dar{({\iota_{\mathbb{G}}^{\infty}})^{\mathbb{I}}}\\
(V_m,s) \urar[dashed][description]{\zeta_m} \rar{\psi_{em} u_m} & \big(\mathbb{G}^{\mathbb{I}},s\big)
\end{tikzcd}\end{equation*}
commutes and has a unique f\/iller. This means that the $\phi_{em}$ form a Penon open cover of $G_{\infty}$ whose f\/illers factor through $\mathbb{G}_{\infty}^{\mathbb{I}}$. By pulling back this cover along generalised elements $X \rightarrow G_{\infty}$ we deduce that the jet part $\mathbb{G}_{\infty}$ is $\mathcal{E}$-path connected.

\section{Integral completeness}\label{sec:integral-completeness}

One of the main assumptions that we require to prove Lie's second theorem in \cite{burke-relative-to-local-models} is that of integral completeness.
Recall from Def\/inition~\ref{def:integral-complete} that an arbitrary groupoid $\mathbb{G}$ in a well-adapted model $\mathcal{E}$ of synthetic dif\/ferential geometry is integral complete if\/f
\begin{gather*}\mathbb{G}^{\mathbb{I}} \xrightarrow{\mathbb{G}^{\iota_{\mathbb{I}}^{\infty}}} \mathbb{G}^{\mathbb{I}_{\infty}}\end{gather*}
is an isomorphism in ${\rm Gpd}(\mathcal{E})$ where $\mathbb{I}$ is the pair groupoid on the unit interval~$I$. In Section~\ref{sec:paths-of-infinitesimals} we show that the classical $A$-paths (see for instance~\cite{2000JACKolkandJJDuistermaat44}) correspond to global sections of $\mathbb{G}^{\mathbb{I}_{\infty}}$ in~$\mathcal{E}$ and the classical $G$-paths (see also~\cite{2000JACKolkandJJDuistermaat44}) correspond to global sections of $\mathbb{G}^{\mathbb{I}}$ in $\mathcal{E}$. In Section~\ref{sec:Lie Groupoids are Integral Complete} we show that all classical Lie groupoids are integral complete. But f\/irst we give a more explicit description of the arrow space of~$\mathbb{I}_{\infty}$.

\subsection{Representing object for inf\/initesimal paths is trivial}\label{sec:arrow-space-of-A-paths}

In this section we show that the arrow space $\mathbb{I}_{\infty}^{\mathbf 2}$ of $\mathbb{I}_{\infty}$ is isomorphic to $I\times D_{\infty}$.

Recall from Lemma~\ref{lem:existence-of-w-factorisations} that the arrow space of $\mathbb{I}_{\infty}$ is characterised as follows. A generalised element $(a,b)\in (I^2,\pi_1)$ is in $(\mathbb{I}_{\infty}^{\mathbf 2},\pi_1)$ if\/f there exists $m\in (I,1_I)$ such that $(m,m)\approx (a,b)$. By def\/inition of $\approx$ if $b-a\in D_{\infty}$ then $a\approx b$. This means that it will suf\/f\/ice to prove the following result:
\begin{Lemma}\label{lem:b-a-nilpotent}
 If $(a,b)\colon X \rightarrow I^2$ and $a\approx b$ in $\mathcal{E}$ then $b-a\in D_{\infty}$.
\end{Lemma}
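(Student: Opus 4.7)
The plan is to unwind the assumed relation into a chain of single $\sim$-steps and then to invoke the Kock--Lawvere axiom for Weil spectra to show that each step contributes only a nilpotent increment to $b - a$.

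First I would reduce to the single-step case. By the description of $\approx$ as the transitive closure of $\sim$, and interpreting $a \approx b$ as $(a,a) \approx (a,b)$ in $(I^2, \pi_1)$ in the slice $\mathcal{E}/I$ (in the sense of the preceding discussion), there is a cover $(\iota_i\colon X_i \to X)_{i \in I}$ and for each $i$ a finite chain $(a_i, a_i) = (a_i, x_{i_0}) \sim \cdots \sim (a_i, x_{i_{n_i}}) = (a_i, b_i)$. Since $D_\infty = \bigcup_k D_k$ is an effective union of subobjects of $R$, factoring through $D_\infty$ is local on covers, so it suffices to show that $b_i - a_i \in D_\infty$ on each $X_i$. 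Using $b_i - a_i = \sum_j (x_{i_j} - x_{i_{j-1}})$ together with closure of $D_\infty$ under addition---if $u \in D_k$ and $v \in D_l$, the binomial theorem gives $(u+v)^{k+l+1} = 0$, since in every term one of the two exponents forces a vanishing---the problem reduces to the single-step statement: if $(a,a) \sim (a,b)$ in $\mathcal{E}/I$ then $b - a \in D_\infty$.

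For the single-step case, unpack $\sim$ on a further refined cover to obtain a Weil spectrum $D_W$, an arrow $\phi\colon X \times D_W \to (I^2, \pi_1)$ in $\mathcal{E}/I$, and an element $d\colon X \to D_W$ with $\phi(\cdot, 0) = (a,a)$ and $\phi(\cdot, d) = (a, b)$. Since $\phi$ is a morphism over $I$ via $\pi_1$, its first coordinate is constant in the $D_W$-variable and equal to $a$, so all the content lies in the second coordinate $\phi_2\colon X \times D_W \to R$ with $\phi_2(\cdot, 0) = a$ and $\phi_2(\cdot, d) = b$. By the standard extension of the Kock--Lawvere axiom to Weil spectra---namely that $R^{D_W}$ is isomorphic as an $R$-algebra to the Weil algebra $W$ presenting $D_W$---the transpose of $\phi_2$ affords, parametrically in $x$, a polynomial expansion $\phi_2(x, y) = \sum_\alpha c_\alpha(x) y^\alpha$ with only finitely many nonzero coefficients $c_\alpha\colon X \to R$. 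Evaluation at $y = 0$ singles out $c_{\mathbf 0}(x) = a(x)$, so
\[ b(x) - a(x) = \phi_2(x, d(x)) - \phi_2(x, 0) = \sum_{|\alpha| \geq 1} c_\alpha(x)\, d(x)^\alpha. \]
Each generator $y_i$ of $D_W$ satisfies $y_i^{k_i} = 0$, so every monomial $d(x)^\alpha$ with $|\alpha| \geq 1$ is nilpotent; thus each summand lies in $D_\infty$ and, by closure under addition, so does $b - a$.

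The only nontrivial input is the polynomial expansion for a general Weil spectrum rather than just for $D_k$ as stated in the introduction; this is standard in well-adapted models (the isomorphism $R^{D_W} \cong W$ follows from the Kock--Lawvere axiom together with the product-preservation and colimit-preservation properties of $(-)^{D_W}$), and once it is available the rest of the argument consists only of the covering manipulations and the observation that a finite sum of nilpotent elements is nilpotent.
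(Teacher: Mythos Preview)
Your argument is correct and follows essentially the same route as the paper: treat the single-step relation $\sim$ first, invoke the Kock--Lawvere axiom to conclude that the increment is nilpotent, and then telescope along the finite chain witnessing $\approx$ using that a finite sum of nilpotents is nilpotent. The paper's proof is considerably terser---it writes only that from $a\sim b$ one gets $b=a+N$ with $N$ nilpotent ``by the Kock--Lawvere axiom'', and then observes $(b-a)^{\sum_i k_i}=0$ from the chain---whereas you spell out the passage to covers, the general Weil-spectrum form of Kock--Lawvere, and the binomial bound for the sum. Your slice-category reading of the hypothesis as $(a,a)\approx(a,b)$ in $\mathcal{E}/I$ is more elaborate than the paper's reading (which takes $a\approx b$ plainly in $\mathcal{E}$, i.e., with $M=1$), but projecting to the second coordinate recovers the simpler statement, so no harm is done.
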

\begin{proof}
First suppose that $a\sim b$. This means that there exist $W\in {\rm Spec}({\rm Weil})$, $\phi\in I^{D_W}$ and $d\in D_W$ such that $\phi(0)=a$ and $\phi(d)=b$. Then by the Kock--Lawvere axiom $b=a+N$ for some nilpotent $N$.

Suppose now that $a\approx b$. This means that there exist $a_0,\dots ,a_n$ such that $a=a_0\sim a_1\sim \dots \sim a_n=b$. Now we know that for all $i\in\{1,\dots ,n\}$ there exists $k_i\in\mathbb{N}$ such that $(a_i-a_{i-1})^{k_i}=0$. But then $(b-a)^{\Sigma_i k_i}=0$ as required.
\end{proof}

\begin{Corollary}\label{lem:A-path-is-trivial}
 The groupoid $\mathbb{I}_{\infty}$ has underlying reflexive graph isomorphic to
 \begin{equation*}\begin{tikzcd}
 I\times D_{\infty} \arrow[yshift=1.5ex]{r}{+} \rar[leftarrow][description]{e} \arrow[yshift=-1.5ex]{r}[swap]{\pi_{1}} & I,
 \end{tikzcd}\end{equation*}
 where $e=(1_{I},0)$ and composition $I\times D_{\infty}\times D_{\infty}\rightarrow I\times D_{\infty}$ defined by
 \begin{gather*}(a,d,d')\mapsto (a,d+d').\end{gather*}
\end{Corollary}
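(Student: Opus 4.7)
The plan is to combine Lemma~\ref{lem:b-a-nilpotent} with the easy converse (stated in the paragraph immediately preceding that lemma) to pin down the arrow space $\mathbb{I}_{\infty}^{\mathbf{2}}$ as a subobject of $I^2$, and then to exhibit an explicit isomorphism to $I \times D_{\infty}$ through which the pair-groupoid structure on $\mathbb{I}$ transports to the formulas stated.

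First I would combine the two directions to obtain a clean characterisation of the arrow space. The discussion before Lemma~\ref{lem:b-a-nilpotent} observes that whenever $b - a \in D_{\infty}$ one has $a \approx b$, and Lemma~\ref{lem:b-a-nilpotent} gives the converse. Hence as subobjects of $(I^2, \pi_1)$,
\begin{gather*}
\mathbb{I}_{\infty}^{\mathbf{2}} = \bigl\{(a,b) \in I^2 \colon (m,m) \approx (a,b) \text{ for some } m \bigr\} = \bigl\{(a,b) \in I^2 \colon b - a \in D_{\infty}\bigr\}.
\end{gather*}
I would then define $\sigma \colon I \times D_{\infty} \to \mathbb{I}_{\infty}^{\mathbf{2}}$ by $\sigma(a, d) = (a, a+d)$. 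The assignment $(a, b) \mapsto (a, b-a)$ lands in $I \times D_{\infty}$ by the above characterisation and is an inverse to $\sigma$, so $\sigma$ is an isomorphism of objects over $I$.

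Finally I would transport the reflexive graph and composition of the pair groupoid $\mathbb{I}$ along $\sigma$. Under this transport, the source map (first coordinate) of $(a, a+d)$ corresponds to the projection $(a,d) \mapsto a$; the target map (second coordinate) corresponds to $(a, d) \mapsto a + d$; the identity $a \mapsto (a,a)$ corresponds to $a \mapsto (a, 0)$; and the pair-groupoid composition $(a, b) \circ (b, c) = (a, c)$ applied to the composable pair $(a, a+d)$ and $(a+d, a+d+d')$ yields $(a, a+d+d')$, i.e.\ $(a, d, d') \mapsto (a, d + d')$ after parameterising the pullback of composable arrows as $I \times D_{\infty} \times D_{\infty}$. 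This matches the structure claimed in the statement.

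There is no real obstacle: the analytic content was absorbed by Lemma~\ref{lem:b-a-nilpotent}, and this corollary is essentially a repackaging. The only small point to be careful about is the parameterisation of the composable-pair pullback, which must use the target map $+$ rather than a blind identification, so that $(a, d, d')$ really represents the composable pair $\bigl((a, a+d), (a+d, a+d+d')\bigr)$.
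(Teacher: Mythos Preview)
Your proposal is correct and follows the same approach as the paper: define the isomorphism $I\times D_{\infty}\to \mathbb{I}_{\infty}^{\mathbf 2}$ by $(a,d)\mapsto (a,a+d)$, and use Lemma~\ref{lem:b-a-nilpotent} to see that the candidate inverse $(a,b)\mapsto(a,b-a)$ lands in $I\times D_{\infty}$. The paper's proof is simply terser, leaving the transport of the groupoid structure implicit where you spell it out.
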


 \begin{proof} For all $a\in I$ we have $a\approx a+d$ and we can def\/ine an arrow $I\times D_{\infty} \rightarrow \mathbb{I}_{\infty}^{\mathbf 2}$ by $(a,d)\mapsto (a,a+d)$. The inverse $(a,b)\mapsto (a,b-a)$ factors through $I\times D_{\infty}$ by Lemma~\ref{lem:b-a-nilpotent}.
 \end{proof}

\subsection{Formal group laws}\label{sec:formal-group-laws}

When we form the inf\/initesimal part of a category in \cite{burke-relative-to-local-models} our construction corresponds to the part of a Lie group represented by its formal group law. Following~\cite{MR506881} we def\/ine an $n$-dimensional formal group law $F$ to be an $n$-tuple of power series in the variables $X_{1},\dots ,X_{n}$; $Y_{1},\dots ,Y_{n}$ with coef\/f\/icients in $\mathbb{R}$ such that the equalities
\begin{gather}
F\big(\vec{X},\vec{0}\big)=\vec{X},\qquad F\big(\vec{0},\vec{Y}\big)=\vec{Y}\qquad \text{and} \qquad F\big(F\big(\vec{X},\vec{Y}\big),\vec{Z}\big)=F\big(\vec{X},F\big(\vec{Y},\vec{Z}\big)\big)
\end{gather}
hold. We refer to the Introduction of~\cite{MR506881} for the construction of a formal group law from a~Lie group. In fact the category of Lie algebras and formal group laws are shown to be equivalent in Theorem~3 of Section~V.6 of Part~2 in~\cite{MR2179691}.

In the following example we show how to reformulate the construction of a formal group law from a Lie group in terms of the inf\/initesimal elements of the Lie group.
\begin{Example}\label{example-04/27/1504:43:57 PM} Let $(G,\mu)$ be a Lie group whose underlying smooth manifold is $n$-dimensional. Since $G$ is locally isomorphic to $R^{n}$ we see that its jet part is a group of the form $(D_{\infty}^{n},\mu)$ by a~straightforward extension of Lemma~\ref{lem:b-a-nilpotent}. Now to give a multiplication
\begin{gather*}\mu\colon \ D_{\infty}^{n}\times D_{\infty}^{n}\rightarrow D_{\infty}^{n}\end{gather*}
is to give arrows
\begin{gather*}f_{1},\dots ,f_{n}\colon \ (D_{\infty})^{2n}\rightarrow R\end{gather*}
taking values in nilpotent elements. Now we have that
\begin{gather*}(D_{\infty})^{2n}=\bigcup_{k}(D_{k})^{2n}\end{gather*}
and so, since $\mathcal{E}(-,R)$ sends colimits to limits the hom-set $\mathcal{E}(D_{\infty}^{2n},R)$ is given by the limit
\begin{gather*}\cdots \rightarrow\mathcal{E}\big(D_{k+1}^{2n},R\big)\rightarrow\mathcal{E}\big(D_{k}^{2n},R\big)\rightarrow \cdots, \end{gather*}
which by the Kock--Lawvere axiom is equivalently the limit of the polynomial algebras
\begin{gather*}\cdots \rightarrow \mathbb{R}[X_{1},\dots ,X_{2n}]/I_{k+1}\rightarrow \mathbb{R}[X_{1},\dots ,X_{2n}]/I_{k}\rightarrow \cdots, \end{gather*}
where $I_{k}$ is the ideal generated by $(X_{1}^{k},X_{2}^{k},\dots ,X_{2n}^{k})$. This means that $\mathcal{E}(D_{\infty}^{2n},R)$ can be identif\/ied with the ring $\mathbb{R}[[X_{1},\dots ,X_{2n}]]$ of formal power series. Now the condition that the~$f_{i}$ take values in the nilpotent elements implies that the constant term of the power series~$p_{i}$ corresponding to~$f_{i}$ is zero. Under this correspondence, the group axioms for $G$ correspond to the axioms making $p_{1},\dots ,p_{n}$ into a formal group law.
\end{Example}

\subsection{Paths of inf\/initesimals}\label{sec:paths-of-infinitesimals}

The correct notion of a path of inf\/initesimal arrows in a Lie groupoid $\mathbb{G}$ is that of an $A$-path (see for instance~\cite{MR1973056}). In the topos $\mathcal{E}$ the \emph{object of $A$-paths $A(\mathbb{G})$ associated to $\mathbb{G}$} is the subobject of all $\phi\in G^{I\times D}$ such that for all $a\in I$ and all $d\in D$ the arrows $\phi(a,0)$ are identity arrows, the $\phi(a,-)$ are source constant and $t\phi(a,d)=t\phi(a+d,0)$. Note that since $G^D\cong TG$ the global sections of~$A(\mathbb{G})$ are precisely the $A$-paths def\/ined in Section~1 of~\cite{MR1973056}.

In this section we show that $A(\mathbb{G})\cong \mathbb{G}^{\mathbb{I}_{\infty}}$ in $\mathcal{E}$ where~$\mathbb{I}_{\infty}$ is the jet part of the pair groupoid~$\mathbb{I}$ on the unit interval~$I$. Using Corollary~\ref{lem:A-path-is-trivial} we see that $\mathbb{G}^{\mathbb{I}_{\infty}}$ is the subobject of all $\phi\in G^{I\times D_{\infty}}$ such that for all $a\in I$ and all $d\in D_{\infty}$ the arrows $\phi(a,0)$ are identity arrows, the~$\phi(a,-)$ are source constant and not only does $t\phi(a,d)=t\phi(a+d,0)$ hold but indeed
\begin{gather*}\phi(a,d+d')=\phi(a+d,d')\phi(a,d)\end{gather*}
holds for all $d,d'\in D_{\infty}$. This means that there is a natural restriction arrow $\mathbb{G}^{\mathbb{I}_{\infty}} \rightarrow A(\mathbb{G})$. In this section we describe its inverse.

To do this we def\/ine an arrow $v\colon G^{I\times D} \rightarrow G^{I\times D_{\infty}}$ which satisf\/ies $v(\phi)(a,d+d')=v(\phi)(a+d,d')v(\phi)(a,d)$ for all $d,d'\in D_{\infty}$. Recall that $D_{\infty}=\bigcup_i D_i$ and so it will suf\/f\/ice to f\/ind for all $i\in\mathbb{N}$ an arrow $v_i\colon G^{I\times D} \rightarrow G^{I\times D_i}$ such that $v_{i+j}(\phi)(a,d+d')=v_j(\phi)(a+d,d')v_i(\phi)(a,d)$ for all $d\in D_i$ and $d'\in D_j$.

Now we recall the following slight generalisation of the \emph{Bunge axiom} that is Proposition~4 in Section~2.3.2 in \cite{MR1385464}:

\begin{Lemma}\label{lem:bunge-axiom}
Let $i\in \mathbb{N}$ and consider the arrows $f_{1},\dots ,f_{i}\colon D^{i-1}\rightarrow D^{i}$ defined by
\begin{gather*}f_{m}(d_{1},\dots ,d_{i-1})=(d_{1},\dots ,d_{m-1},0,d_{m},\dots ,d_{i-1}).\end{gather*}
Then for any microlinear space $G$ the arrow
\begin{gather*}G^{D_{i}}\xrightarrow{G^{+}}G^{D^{i}}\end{gather*}
is the joint equaliser of $G^{f_{1}},\dots,G^{f_{i}}$.
\end{Lemma}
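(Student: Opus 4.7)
My plan is to dualise the statement to the category of Weil algebras, verify an equaliser there by an explicit calculation, and then invoke the microlinearity of $G$. Under the standard correspondence we have $D_i={\rm Spec}(\mathbb{R}[d]/(d^{i+1}))$, $D^i={\rm Spec}(\mathbb{R}[d_1,\dots,d_i]/(d_j^2))$ and $D^{i-1}={\rm Spec}(\mathbb{R}[e_1,\dots,e_{i-1}]/(e_j^2))$. The addition $+\colon D^i \rightarrow D_i$ is well defined because, if each $d_j^2=0$, any monomial appearing in $(d_1+\cdots+d_i)^{i+1}$ has total degree $i+1$ distributed among $i$ variables, so by the pigeonhole principle some factor is repeated and the whole monomial vanishes. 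On algebras $+$ corresponds to $d\mapsto d_1+\cdots+d_i$, while each $f_m$ corresponds to $d_m\mapsto 0$ and $d_j\mapsto e_j$ for $j<m$, $d_j\mapsto e_{j-1}$ for $j>m$.

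The first substantive step is to show that in the category of Weil algebras the map $+^{*}\colon \mathbb{R}[d]/(d^{i+1}) \rightarrow \mathbb{R}[d_1,\dots,d_i]/(d_j^2)$ is the joint equaliser of the $f_m^{*}$. This is a finite dimensional linear computation. A basis of $\mathbb{R}[d_1,\dots,d_i]/(d_j^2)$ is indexed by subsets $S\subseteq\{1,\dots,i\}$, and since each $d_j^2=0$ we have $(d_1+\cdots+d_i)^{k}=k!\,\sigma_k$ where $\sigma_k$ is the $k$-th elementary symmetric polynomial. Hence the image of $+^{*}$ consists of those $p=\sum_{S}a_S\prod_{j\in S}d_j$ for which $a_S$ depends only on $|S|$. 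A short combinatorial check shows this is equivalent to the joint equaliser condition: the coefficient of $\prod_{k\in T}e_k$ in $f_m^{*}(p)$ equals $a_{\psi_m^{-1}(T)}$, where $\psi_m^{-1}$ inserts a gap at position $m$, and by varying $T$ and $m$ one can connect any two subsets of a fixed common size through a sequence of forced equalities $a_S=a_{S'}$.

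The final step is to transport this equaliser statement through the microlinearity hypothesis. By definition a microlinear space $G$ is one for which $G^{{\rm Spec}(-)}$ converts colimits of this shape in the category of Weil algebras into limits in $\mathcal{E}$; equivalently, the joint coequaliser diagram $D^{i-1}\rightrightarrows D^i\to D_i$ in Weil spectra is sent by $G^{(-)}$ to the joint equaliser $G^{D_i}\xrightarrow{G^{+}}G^{D^i}\rightrightarrows G^{D^{i-1}}$ in $\mathcal{E}$, which is precisely the claim.

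The main obstacle is the combinatorial bookkeeping in the second step: while each individual check is routine, one has to be careful about the correspondence between subsets of $\{1,\dots,i\}$ and of $\{1,\dots,i-1\}$, and the swap-through-a-third-index trick used to link any two subsets of the same size degenerates when $i\leq 2$, so those low cases need to be verified directly.
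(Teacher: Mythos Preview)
Your argument is correct. The paper does not actually prove this lemma: it simply records it as Proposition~4 of Section~2.3.2 in Lavendhomme's \emph{Basic concepts of synthetic differential geometry}, and the argument there follows the same pattern you outline---dualise to Weil algebras, identify the image of $+^{*}$ with the span of the elementary symmetric polynomials via $(d_1+\cdots+d_i)^{k}=k!\,\sigma_k$, check that this coincides with the joint equaliser of the $f_m^{*}$, and then use microlinearity to transport the resulting quasi-colimit of Weil spectra to a limit under $G^{(-)}$.

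One small comment on your stated obstacle: the combinatorial step can be arranged uniformly in $i$ by using adjacent moves rather than a swap through a third index. If $j\in S$ and $j{+}1\notin S$, put $T=\psi_{j+1}(S)$; comparing the coefficients of $\prod_{k\in T}e_k$ in $f_{j+1}^{*}(p)$ and $f_j^{*}(p)$ forces $a_S=a_{(S\setminus\{j\})\cup\{j+1\}}$, and iterating such moves carries any $k$-subset to $\{1,\dots,k\}$. This works for every $i\geq 1$, so no separate treatment of small~$i$ is required.
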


Using Lemma~\ref{lem:bunge-axiom} we see that it will now suf\/f\/ice to f\/ind for all $i\in \mathbb{N}$ an arrow $v_i\colon G^{I\times D} \rightarrow G^{I\times D^i}$ such that for all $m,l\in \{1,\dots ,i\}$ the equalities $G^{I\times f_m}v_i(\phi)=G^{I\times f_l}v_i(\phi)$ and \begin{gather*}
v_{i+j}(\phi)(a,(d_1,\dots,d_{i+j}))=v_j(\phi)\left(a+\sum_{m=1}^{i} d_m,(d_{i+1},\dots,d_{i+j})\right)v_i(\phi)(a,(d_1,\dots,d_i))
\end{gather*} hold for the $f_i$ def\/ined in Lemma~\ref{lem:bunge-axiom}.

\begin{Lemma} The restriction $\mathbb{G}^{\mathbb{I}_{\infty}} \rightarrow A(\mathbb{G})$ has an inverse.
\end{Lemma}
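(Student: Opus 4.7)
The plan is to construct the inverse by extending an $A$-path $\phi \in A(\mathbb{G})$ to a compatible family of arrows out of each $D_i$ via iterated composition in $\mathbb{G}$, and then assembling them using that $D_\infty = \bigcup_i D_i$. Concretely, for each $i \geq 1$ I first define a candidate $\tilde{v}_i \colon A(\mathbb{G}) \to G^{I \times D^i}$ by the formula
\[
\tilde{v}_i(\phi)(a, (d_1, \ldots, d_i)) = \phi\bigl(a + \textstyle\sum_{k<i} d_k, d_i\bigr) \cdots \phi(a + d_1, d_2) \cdot \phi(a, d_1).
\]
Using the source-constancy of each $\phi(b,-)$ and the target identity $t\phi(b,d) = t\phi(b+d,0)$ from the definition of $A(\mathbb{G})$, successive factors are composable: the target of $\phi(a + \sum_{k<r-1} d_k, d_{r-1})$ lies over $a + \sum_{k<r} d_k$, which is exactly the object over which the next factor starts.

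The next step is to use Lemma~\ref{lem:bunge-axiom} to descend $\tilde{v}_i$ along $G^{I \times D_i} \rightarrowtail G^{I \times D^i}$. For this I must check $G^{I \times f_m} \tilde{v}_i(\phi) = G^{I \times f_l} \tilde{v}_i(\phi)$ for all $m, l$. When $d_m = 0$, the $m$-th factor is the identity arrow $\phi(a + \sum_{k<m} d_k, 0)$, which drops out of the composition; the base-point shifts appearing in the remaining factors then coincide with those in $\tilde{v}_{i-1}(\phi)$ applied to $(d_1, \ldots, \widehat{d_m}, \ldots, d_i)$. Hence every insertion $f_m$ produces the same arrow $\tilde{v}_{i-1}(\phi)$, and Lemma~\ref{lem:bunge-axiom} produces a unique lift $v_i \colon A(\mathbb{G}) \to G^{I \times D_i}$.

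To assemble the $v_i$ into an arrow $v \colon A(\mathbb{G}) \to G^{I \times D_\infty}$, I invoke the well-adapted model axiom that $(-)^{D_W}$ preserves colimits, whence $G^{D_\infty} \cong \lim_i G^{D_i}$. Compatibility of the $v_i$ with the inclusions $D_i \hookrightarrow D_{i+1}$ is the same identity-factor-drops-out calculation as above, specialised to $(d_1, \ldots, d_i, 0) \in D^{i+1}$. The cocycle identity stated in the excerpt is then immediate from associativity of composition in $\mathbb{G}$, since the shift $a + \sum_{m=1}^{i} d_m$ is precisely the base-point at which the $v_j$-factor begins. Passing to the limit, the cocycle identity becomes the multiplicative identity $v(\phi)(a, d+d') = v(\phi)(a+d, d') \cdot v(\phi)(a, d)$ for all $d, d' \in D_\infty$, so $v$ factors through the subobject $\mathbb{G}^{\mathbb{I}_\infty} \rightarrowtail G^{I \times D_\infty}$.

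That $v$ is a two-sided inverse of the restriction then falls out: one composite is $v_1 = 1_{A(\mathbb{G})}$, while for the other composite a given $\psi \in \mathbb{G}^{\mathbb{I}_\infty}$ already satisfies $\psi(a, d_1 + \cdots + d_i) = \psi(a + \sum_{k<i} d_k, d_i) \cdots \psi(a, d_1)$ by its own multiplicativity, and this is exactly $v_i$ applied to the restriction $\psi|_{A(\mathbb{G})}$. I expect the principal obstacle to be the equalizer bookkeeping for Lemma~\ref{lem:bunge-axiom} and verifying that every construction is natural in the stage of definition $X$ rather than only for global elements; both should be routine once the inductive combinatorics of the iterated composition are pinned down.
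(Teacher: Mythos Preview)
Your proposal is correct and follows essentially the same approach as the paper: the paper defines the same iterated-composition arrows $v_i(\phi)(a,(d_1,\dots,d_i))=\phi(a+\Sigma_{m=1}^{i-1}d_m,d_i)\cdots\phi(a,d_1)$, invokes the identity-arrow property to verify the equaliser condition of Lemma~\ref{lem:bunge-axiom}, and observes the cocycle identity holds by construction. One small remark: the isomorphism $G^{I\times D_\infty}\cong\lim_i G^{I\times D_i}$ follows already from the fact that exponentials send colimits in the exponent to limits, so you need not invoke the axiom that $(-)^{D_W}$ preserves colimits at that step.
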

 \begin{proof}
The arrows $v_i\colon G^{I\times D} \rightarrow G^{I\times D^i}$ def\/ined by
 \begin{gather*}v_i(\phi)(a,(d_1,\dots,d_i))=\phi(a+\Sigma_{m=1}^{i-1} d_m,d_i)\cdots \phi(a+d_1,d_2)\phi(a,d_1) \end{gather*}
satisfy $G^{I\times f_m}v_i(\phi)=G^{I\times f_l}v_i(\phi)$ because $\phi(a,0)$ are identity arrows for all $a\in I$ and satisf\/ies
\begin{gather*}
v_{i+j}(\phi)(a,(d_1,\dots,d_{i+j}))=v_j(\phi)\left(a+\sum_{m=1}^{i} d_m,(d_{i+1},\dots,d_{i+j})\right)v_i(\phi)(a,(d_1,\dots,d_i))
\end{gather*} by construction. It is easy to see that the $v_i$ def\/ine an inverse to the restriction.
 \end{proof}

\subsection{Integration of paths of inf\/initesimals is groupoid enriched}

Recall that in Def\/inition~\ref{def:integral-complete} we def\/ined the notion of integral complete groupoid using an isomorphism in the category ${\rm Gpd}(\mathcal{E})$. The following result show that we only need to check this condition on the space of objects which is an object of~$\mathcal{E}$.

\begin{Proposition}\label{prop:groupoid-enriched-integration}
If $\mathbb{G}^{\iota_{\infty}}\colon \mathbb{G}^{\mathbb{I}}\rightarrow\mathbb{G}^{\mathbb{I}_{\infty}}$ is an isomorphism in a well-adapted model $\mathcal{E}$ then it is an isomorphism of groupoids also.
\end{Proposition}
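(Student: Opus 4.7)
The plan is to present the arrow object of any exponential groupoid $\mathbb{G}^{\mathbb{K}}$ as a pullback whose $\mathbb{K}$-dependence is confined to one of its legs, and then pull back the hypothesised isomorphism along the other leg.

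For any internal groupoid $\mathbb{K}$ with object space $\mathbb{K}_0$, I claim that the arrow object $(\mathbb{G}^{\mathbb{K}})_1$ is naturally isomorphic, as an object of $\mathcal{E}$, to the pullback
\[
(\mathbb{G}^{\mathbb{K}})_0 \times_{M^{\mathbb{K}_0}} G^{\mathbb{K}_0},
\]
where the left leg sends a functor $F\colon \mathbb{K}\to \mathbb{G}$ to its object-part $F_0\colon \mathbb{K}_0\to M$ and the right leg is post-composition with the source map $s\colon G\to M$. In one direction a natural transformation $\eta\colon F\Rightarrow H$ is sent to the pair $(F,\eta_{-})$, where $\eta_{-}\colon \mathbb{K}_0\to G$ is its family of components. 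In the other direction, given $(F,\alpha)$ in the pullback one defines a functor $H$ by $H_0 := t\circ\alpha$ and, for any arrow $k\colon x\to y$ of $\mathbb{K}$, $H(k) := \alpha_{y}\cdot F(k)\cdot \alpha_{x}^{-1}$; one then checks by direct calculation that $H$ is functorial and that $\alpha$ is indeed a natural transformation $F\Rightarrow H$. The crucial point is that the required inverses $\alpha_{x}^{-1}$ exist, which holds precisely because $\mathbb{G}$ is a groupoid. All of this is pointwise and stable under pullback along generalised elements, so the construction is valid in the internal language of $\mathcal{E}$.

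Applying this identification to $\mathbb{K}=\mathbb{I}$ and $\mathbb{K}=\mathbb{I}_{\infty}$, which share the same object space $I$ by Corollary~\ref{lem:A-path-is-trivial}, I obtain
\[
(\mathbb{G}^{\mathbb{I}})_1 \cong (\mathbb{G}^{\mathbb{I}})_0 \times_{M^I} G^I
\qquad \text{and} \qquad
(\mathbb{G}^{\mathbb{I}_{\infty}})_1 \cong (\mathbb{G}^{\mathbb{I}_{\infty}})_0 \times_{M^I} G^I.
\]
Under these isomorphisms the arrow-level component of $\mathbb{G}^{\iota_{\infty}}$ is the canonical map of pullbacks induced by $(\mathbb{G}^{\iota_{\infty}})_0$ on the left leg and the identity on the $G^I$ leg. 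Since pullbacks preserve isomorphisms and $(\mathbb{G}^{\iota_{\infty}})_0$ is an isomorphism by hypothesis, the induced map on arrows is also an isomorphism in $\mathcal{E}$. Combining this with the hypothesis on objects, $\mathbb{G}^{\iota_{\infty}}$ is invertible in ${\rm Gpd}(\mathcal{E})$.

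The main obstacle is verifying the pullback identification itself, in particular that the formula $H(k) := \alpha_{y}\cdot F(k)\cdot \alpha_{x}^{-1}$ really defines a functor and that $\alpha$ is then natural with respect to it. This is a routine categorical manipulation, but it must be performed internally in $\mathcal{E}$ at an arbitrary stage of definition, so one has to be careful to use only constructions that are stable under change of stage.
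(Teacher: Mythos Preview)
Your argument is correct and takes a genuinely different route from the paper's. The paper works directly with generalised elements: given $\Phi\colon \mathbb{I}_{\infty}\times\mathbf{2}\to\mathbb{G}$, it lifts the two boundary functors $\Phi(-,0)$ and $\Phi(-,1)$ separately to $\psi_0,\psi_1\colon\mathbb{I}\to\mathbb{G}$ using the object-level hypothesis, then verifies the naturality square by forming the conjugate $\theta(x\to y)=\Phi(y,0\to 1)\circ\psi_0(x\to y)\circ\Phi(x,0\to 1)^{-1}$ and invoking \emph{uniqueness} of lifts to force $\theta=\psi_1$. Your approach instead isolates the underlying structural fact once and for all: because $\mathbb{G}$ is a groupoid, the arrow object of $\mathbb{G}^{\mathbb{K}}$ splits as a pullback whose $\mathbb{K}$-dependence lies entirely in $(\mathbb{G}^{\mathbb{K}})_0$ and $\mathbb{K}_0$; since $\iota_{\infty}$ is the identity on objects, the arrow-level comparison becomes a base change of the object-level isomorphism along $s^{I}\colon G^{I}\to M^{I}$. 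Both arguments rest on the same conjugation formula, but your packaging makes the result reusable for any identity-on-objects functor between domains and replaces the explicit uniqueness chase by the functoriality of pullback, while the paper's version stays closer to the concrete extension problem and needs no auxiliary lemma.
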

\begin{proof}
We need to show that natural transformations extend uniquely, i.e.,
\begin{equation*}\begin{tikzcd}
\mathbb{I}_{\infty}\times\pmb{2} \rar{\forall\,\Phi} \dar[swap]{\iota} & \mathbb{G} \\
\mathbb{I}\times\pmb{2}. \urar[dashed][swap]{\exists!\,\Psi} &
\end{tikzcd}
\end{equation*}
Let $\psi_{0}$, $\psi_{1}$ be the unique lifts of $\phi$ precomposed with the two inclusions of $1$ into $\pmb{2}$. If for all $x\rightarrow y$ in $\mathbb{I}$ the diagram
\begin{equation}\label{PsiDef}
	\begin{tikzcd}[column sep=4cm]
\Phi(x,1) \rar{\psi_{1}(x\rightarrow y)} \dar[leftarrow,swap]{\Phi(x,0\rightarrow 1)} & \Phi(y,1) \dar[leftarrow]{\Phi(y,0\rightarrow 1)} \\
\Phi(x,0) \rar{\psi_{0}(x\rightarrow y)} & \Phi(y,0)
\end{tikzcd}
\end{equation}
commutes then we can def\/ine $\Psi(x\rightarrow y,0\rightarrow 1)$ to be this common value. To this end def\/ine $\theta\colon \mathbb{I}\rightarrow \mathbb{G}$ to take $x\rightarrow y$ to
\begin{equation*}\begin{tikzcd}[column sep=4cm]
\Phi(x,1) \dar[swap]{\Phi(x,1\rightarrow 0)} & \Phi(y,1) \dar[leftarrow]{\Phi(y,0\rightarrow 1)} \\
\Phi(x,0) \rar{\psi_{0}(x\rightarrow y)} & \Phi(y,0),
\end{tikzcd}
\end{equation*}
when we restrict to $\mathbb{I}_{\infty}$ (i.e., take $y=x+d$) we see that
\begin{equation*}\begin{tikzcd}[column sep=1.7cm]
\Phi(x,1) \dar[swap]{\Phi(x,1\rightarrow 0)} & \Phi(x+d,1) \dar[swap,leftarrow]{\Phi(x+d,0\rightarrow 1)} \\
\Phi(x,0) \rar{\Phi(x\rightarrow x+d,0)} & \Phi(x+d,0)
\end{tikzcd}
=
\begin{tikzcd}[column sep=1.7cm]
\Phi(x,1) \rar{\Phi(x\rightarrow x+d,1)} & \Phi(x+d,1) \\
{} & {}
\end{tikzcd}
\end{equation*}
and so by the uniqueness of lifts $\theta=\psi_{1}$ and \eqref{PsiDef} commutes.
\end{proof}

\subsection{Lie groupoids are integral complete}\label{sec:Lie Groupoids are Integral Complete}

We show that $\mathbb{G}^{\iota_{\mathbb{I}}^{\infty}}\colon \mathbb{G}^{\mathbb{I}} \rightarrow\mathbb{G}^{\mathbb{I}_{\infty}}$ is an isomorphism in ${\rm Gpd}(\mathcal{E})$. By Proposition~\ref{prop:groupoid-enriched-integration} it will suf\/f\/ice to show that $\mathbb{G}^{\iota_{\mathbb{I}}^{\infty}}$ is an isomorphism in $\mathcal{E}$. More concretely, we show that for all representable objects $X$ and arrows $\phi\colon X \rightarrow \mathbb{G}^{\mathbb{I}_{\infty}}$ there exists a~(unique) $\psi\colon X \rightarrow \mathbb{G}^{\mathbb{I}}$ such that $\mathbb{G}^{\iota_{\mathbb{I}}^{\infty}}\psi=\phi$.

By Corollary~\ref{lem:A-path-is-trivial} arrows $\phi\colon X \rightarrow \mathbb{G}^{\mathbb{I}_{\infty}}$ correspond to arrows $\phi\colon X\times I\times D_{\infty} \rightarrow G$ such that $\phi(x,a,0)$ are identity arrows and $\phi(x,a,-)$ are source constant. It is easy to see that arrows $\psi\colon X \rightarrow \mathbb{G}^{\mathbb{I}}$ correspond to arrows $\psi\colon X\times I \rightarrow G$ such that~$\psi(x,0)$ are identity arrows and~$\psi(x,-)$ are source constant. At this point it is convenient to assume that the topos~$\mathcal{E}$ is generated by a~subcanonical site whose underlying category is a full subcategory of the category of af\/f\/ine $C^{\infty}$-schemes as def\/ined in Def\/inition~\ref{def:smooth-affine-schemes}. In particular this means that every representable object is a~closed subset of~$R^n$ for some $n\in \mathbb{N}$. Recall from Lemma~2.26 in~\cite{MR2954043} that if we are given a smooth function that has as domain any closed subset of~$\mathbb{R}^n$ we can lift it to a~smooth function on the whole of~$\mathbb{R}^n$. Therefore since every representable $X$ is a~closed subset of~$R^n$ for some $n\in \mathbb{N}$ it will suf\/f\/ice to prove the result in the case $X=R^n$.

\begin{Theorem}For all $\phi\colon R^n\times I \times D_{\infty} \rightarrow G$ such that $\phi(x,a,0)$ are identity arrows, $\phi(x,a,-)$
is source constant and $\phi(a,d+d')=\phi(a+d,d')\phi(a,d)$ for $x\in R^n$, $a\in I$ and $d,d'\in D_{\infty}$ there exists a unique $\psi\colon R^n\times I \rightarrow G$ such that $\psi(x,0)$ are identity arrows, $\psi(x,-)$ is source constant and $\psi(x,a+d)=\phi(x,a,d)\psi(x,a)$ for all $d\in D_{\infty}$.
\end{Theorem}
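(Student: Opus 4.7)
The plan is to reduce to the classical theorem on integration of $A$-paths in a Lie groupoid. Since $R^n=\iota(\mathbb{R}^n)$ and the arrow space $G$ comes from a Hausdorff Lie groupoid, full faithfulness of $\iota\colon {\rm Man}\rightarrow \mathcal{E}$ together with the Kock--Lawvere axiom makes the data of $\phi\colon R^n\times I\times D_{\infty}\rightarrow G$ equivalent to a compatible family of smooth maps $\mathbb{R}^n\times I\times D_k\rightarrow G$, i.e.\ a smooth $\infty$-jet field along the smooth family of curves $(x,a)\mapsto\phi(x,a,0)$ which lies inside $e(M)$ by the identity-arrow hypothesis. Differentiating in the last variable produces a smooth map $\alpha\colon \mathbb{R}^n\times I\rightarrow A$ defined by $\alpha(x,a)=\partial_d\phi(x,a,d)|_{d=0}$ lying in $\ker(ds)_{\phi(x,a,0)}\cong A_{t\phi(x,a,0)}$, and restricting the cocycle condition on $\phi$ to first-order nilpotents forces the anchor compatibility $\rho\circ\alpha(x,a)=\partial_a(t\phi(x,a,0))$, so each $\alpha(x,-)$ is an $A$-path in the classical sense.

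Next I would invoke the classical integration of $A$-paths in a Hausdorff Lie groupoid: for every smooth family of $A$-paths parameterised by a smooth manifold there is a unique smooth family of $G$-paths lifting it. Concretely this produces the unique $\psi\colon \mathbb{R}^n\times I\rightarrow G$ with $\psi(x,0)$ an identity arrow, $\psi(x,-)$ source constant, and $\dot\psi(x,a)=(dR_{\psi(x,a)})\alpha(x,a)$; smoothness in $x$ follows from standard smooth dependence of ODE flows on parameters. Transporting $\psi$ back along $\iota$ yields the required arrow in $\mathcal{E}$.

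It then remains to upgrade the multiplicativity $\psi(x,a+d)=\phi(x,a,d)\psi(x,a)$ from first-order $d\in D$, which is exactly the ODE above, to all $d\in D_{\infty}$. By Lemma~\ref{lem:bunge-axiom} an arrow out of $D_k$ is determined by its restriction to $D^k$ via addition, so it suffices to verify the identity after substituting $d=d_1+\cdots+d_k$ with $d_j\in D$; both sides then telescope, the right-hand side reducing to $\phi(x,a+d_1+\cdots+d_{k-1},d_k)\cdots\phi(x,a,d_1)\psi(x,a)$ via the cocycle property of $\phi$ and matching the $k$-fold iterate of the first-order relation for $\psi$. The step I expect to be most delicate is the global existence of $\psi$ on the whole compact interval $[0,1]$: this is not a formal consequence of local ODE theory but rests on the standard Lie-groupoid fact that right-translation extends $\alpha$ to a time-dependent vector field on each $s$-fibre whose flow is defined for all times for which $\alpha$ is defined. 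Uniqueness, by contrast, is immediate from the uniqueness half of the ODE.
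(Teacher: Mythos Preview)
Your argument is correct and follows essentially the same route as the paper: extract a time-dependent vector field from the first-order data of $\phi$, integrate it (with smooth dependence on the parameter $x\in\mathbb{R}^n$), and then promote the resulting relation $\psi(x,a+d)=\phi(x,a,d)\psi(x,a)$ from $d\in D$ to $d\in D_{\infty}$. The two presentations are mirror images of one another in what they spell out versus cite. You package the ODE step as ``classical integration of $A$-paths'' and note that global existence on $[0,1]$ comes from right-translating $\alpha$ to a time-dependent vector field on the $s$-fibre; the paper instead writes this step out explicitly, building the pullback manifold $(R^n\times I)\tensor[_{\phi_0}]{\times}{_t}G$ and the infinitesimal action $(y,d,x,a,g)\mapsto(x,a+d,\phi(x,a,d)\circ g)$ on it, which is exactly that right-invariant vector field. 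Conversely, for the upgrade from $D$ to $D_{\infty}$ the paper simply cites Proposition~2.7 of Kock--Reyes, whereas you give the self-contained argument via the Bunge axiom (Lemma~\ref{lem:bunge-axiom}) and telescoping the cocycle identity; your argument here is in fact the same one the paper already used in Section~\ref{sec:paths-of-infinitesimals} to identify $A(\mathbb{G})\cong\mathbb{G}^{\mathbb{I}_{\infty}}$. One small point of care: your appeal to full faithfulness of $\iota$ applies only after transposing $\phi|_{D}$ to an arrow $R^n\times I\rightarrow G^D\cong TG$ between representables, not to $\phi$ itself (since $D_{\infty}$ is not a manifold); but since you only ever use the first-order part $\alpha$ this is harmless.
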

\begin{proof}
To do this we make rigorous the intuitive idea of composing together inf\/initely many inf\/initesimal arrows to get a macroscopic arrow. First let $\phi_0=s\phi(-,-,0)=t\phi(-,-,0)$. Then the pullback
\begin{equation*}
\begin{tikzcd}
\big(R^n\times I\big)\tensor[_{\phi_0}]{\times}{_t} G \rar \dar[twoheadrightarrow]{\pi_{0,1}} & G \dar[twoheadrightarrow]{t},\\
R^n\times I \rar{\phi_0} & M
\end{tikzcd}
\end{equation*}
is a manifold because~$t$ is a submersion. Since $\phi(x,a,-)$ is source constant the inf\/initesimal action
\begin{align*}
R^n\times D\times \left(\big(R^n\times I\big)\tensor[_{\phi_0}]{\times}{_t} G\right) & \rightarrow \left(\big(R^n\times I\big)\tensor[_{\phi_0}]{\times}{_t} G\right),\\
(y,d,x,a,g)&\mapsto (x,a+d,\phi(x,a,d)\circ g)
\end{align*}
def\/ines a collection of smoothly parameterised vector f\/ields on $(R^n\times I)\tensor[_{\phi_0}]{\times}{_t} G$. Since the solution curves of smooth vector f\/ields have a smooth dependence on parameters we obtain using the initial conditions $\psi(y,0)=(y,0,\phi(y,0,0))$ a parameterised solution $\psi\colon R^n \times I\rightarrow (R^n\times I)\tensor[_{\phi_0}]{\times}{_t} G$ which satisf\/ies:
\begin{itemize}\itemsep=0pt
\item $\psi(y,0)=(\psi_1(y,0),\psi_2(y,0),\psi_3(y,0))=(y,0,\phi(y,0,0))$,
\item $\psi_1(y,a+d)=\psi_1(y,a)$,
\item $\psi_2(y,a+d)=\psi_2(y,a)+d$,
\item $\psi_3(y,a+d)=\phi(\psi_1(y,a),\psi_2(y,a),d)\circ\psi_3(y,a)$.
\end{itemize}
Therefore
\begin{itemize}\itemsep=0pt
\item $\psi_1(y,a)=\psi_1(y,0)=y$,
\item $\psi_2(y,a)=\psi_2(y,0)+a=a$,
\item $\psi_3(y,a+d)=\phi(\psi_1(y,a),\psi_2(y,a),d)\psi_3(y,a)=\phi(y,a,d)\circ\psi_3(y,a))$,
\end{itemize}
and $\psi_3$ is the map we require. Now we check that $\psi_3$ does indeed def\/ine a $G$-path. The map $\psi_3$ is source constant in the second variable because
\begin{gather*}s\psi_3(y,a+d)=s\left(\phi(y,a,d)\circ\psi(x,a)\right)=s\psi_3(y,a)\end{gather*}
for all $d\in D$. Finally we appeal to Proposition~2.7 in~\cite{MR2213027} to conclude that $\psi_3(y,a+d)=\phi(y,a,d)\psi_3(y,a)$ holds for all $d\in D_{\infty}$.
\end{proof}

\subsection*{Acknowledgements}

The author is very grateful for the constructive comments of\/fered by and the important corrections indicated by the editor and referees. The author would like to acknowledge the assistance of Richard Garner, my Ph.D.\ supervisor at Macquarie University Sydney, who provided valuable comments and insightful discussions in the genesis of this work. In addition the author is grateful for the support of an International Macquarie University Research Excellence Scholarship.

\pdfbookmark[1]{References}{ref}
\LastPageEnding

\end{document}